\newcounter{ExacSeq}
\newcommand{\ES}{{(ES\theExacSeq)\stepcounter{ExacSeq}}}
\newcommand{\Title}{Title}
\numberwithin{equation}{section}
\theoremstyle{definition}\newtheorem{definition}{Definition}[section]
\newtheorem{defititle}[definition]{\Title}
\newtheorem{notation}[definition]{Notation}
\newtheorem{remark}[definition]{Remark}
\newtheorem{remarks}[definition]{Remarks}
\newtheorem{assumption}[definition]{Assumption}
\newtheorem{ex}[definition]{Example}
\newtheorem{exs}[definition]{Examples}}
\newtheorem{prop}[definition]{Proposition}
\newtheorem{proposition-definition}[definition]{Proposition-Definition}
\newtheorem{lemma}[definition]{Lemma}
\newtheorem{thm}[definition]{Theorem}
\newtheorem{cor}[definition]{Corollary}
\newtheorem*{prop*}{Proposition}
\newtheorem*{theorem*}{Theorem}
\newcommand{\cB}{\mathcal{B}}
\newcommand{\cG}{\mathcal{G}}
\newcommand{\cF}{\mathcal{F}}
\newcommand{\cE}{\mathcal{E}}
\newcommand{\cK}{\mathcal{K}}
\newcommand{\cT}{\mathcal{T}}
\newcommand{\gT}{\mathfrak{T}}
\newcommand{\gG}{\mathfrak{G}}
\newcommand{\cJ}{\mathcal{J}}
\newcommand{\cI}{\mathcal{I}}
\newcommand{\bA}{\mathbb{A}}
\newcommand{\bG}{\mathbb{G}}
\newcommand{\bH}{\mathbb{H}}
\newcommand{\bK}{\mathbb{K}}
\newcommand{\cH}{\mathcal{H}}
\newcommand{\cM}{\mathcal{M}}
\newcommand{\bE}{\mathbb{E}}
\newcommand{\resp}{{\it resp.}\/ }
\newcommand{\id}{{\hbox{id}}}
\newcommand{\ie}{{\it i.e.}\/ }
\newcommand{\eg}{{\it e.g.}\/ }
\newcommand{\cf}{{\it cf.}\/ }
\newcommand{\X}{\mathcal{X}}
\newcommand{\DNC}{DNC}
\def\gpd{\,\lower1pt\hbox{$\longrightarrow$}\hskip-.24in\raise2pt
             \hbox{$\longrightarrow$}\,}
\renewcommand\theenumi{\alph{enumi}}
\renewcommand\labelenumi{\rm {\theenumi})}
\renewcommand{\latticebody}{\drop@{ }}
\newcommand{\rra}{\rightrightarrows}
\newcommand{\N}{\ensuremath{\mathbb N}}
\newcommand{\Z}{\ensuremath{\mathbb Z}}
\newcommand{\C}{\ensuremath{\mathbb C}}
\newcommand{\R}{\ensuremath{\mathbb R}}
\newcommand{\cL}{{\mathcal L}}
\newcommand{\cC}{\mathcal{C}}            
\newcommand{\cZ}{\mathcal{Z}}
\DeclareMathOperator{\coker}{coker}
\newcommand{\pih}{\pi}
\def\act{\mathbin{\hbox{$<\kern-.4em\mapstochar\kern.4em$}}}
\def\ract{\mathbin{\hbox{$\mapstochar\kern-.3em>$}}}
\def\PB(#1,#2,#3,#4){\left\{\begin{matrix}#1&\!\!\!\stackrel{?}{\longrightarrow}&\!\!\!#2\\
\downarrow&&\!\!\!\downarrow\\
#3&\!\!\!\stackrel{?}{\longrightarrow}&\!\!\!#4\end{matrix}\right\}}
\def\pb(#1,#2,#3,#4){ \hom(#1 \to #3, #2 \to #4)}
\begin{document}


\begin{center}
{\Large\bf A Baum-Connes conjecture for singular foliations\footnote{AMS subject classification: 46L87,~ Secondary 58J22, 19K45, 53C22, 22A25. Keywords: Baum-Connes conjecture, singular foliation, singularity height.} 

\bigskip

{\sc by Iakovos Androulidakis\footnote{I. Androulidakis was partially supported by SCHI 525/12-1 (DFG, Germany) and FP7-PEOPLE-2011-CIG, Grant No PCI09-GA-2011-290823 (Athens).} and Georges Skandalis\footnote{G. Skandalis was partially supported by ANR-14-CE25-0012-01 (France)}}
}
 
\end{center}

{\footnotesize
Department of Mathematics 
\vskip -4pt National and Kapodistrian University of Athens
\vskip -4pt Panepistimiopolis
\vskip -4pt GR-15784 Athens, Greece
\vskip -4pt e-mail: \texttt{iandroul@math.uoa.gr}
  
\vskip 2pt Universit\'e Paris Diderot, Sorbonne Paris Cit\'e
\vskip-4pt  Sorbonne Universit\'es, UPMC Paris 06, CNRS, IMJ-PRG
\vskip-4pt  UFR de Math\'ematiques, {\sc CP} {\bf 7012} - B\^atiment Sophie Germain 
\vskip-4pt  5 rue Thomas Mann, 75205 Paris CEDEX 13, France
\vskip-4pt e-mail: \texttt{georges.skandalis@imj-prg.fr}
}
\bigskip
\everymath={\displaystyle}

\date{today}

\begin{abstract}\noindent 
We consider singular foliations whose holonomy groupoid may be nicely decomposed using Lie groupoids (of unequal dimension). We construct a $K$-theory group and a natural assembly-type morphism to the $K$-theory of the foliation $C^*$-algebra generalizing to the singular case the Baum-Connes assembly map. This map is shown to be an isomorphism under assumptions of amenability. We examine several examples that can be described in this way and make explicit computations of their $K$-theory.
\end{abstract}
 
\setcounter{tocdepth}{2} 
\tableofcontents

\section*{Introduction}
\addcontentsline{toc}{section}{Introduction}

Let $(M,\cF)$ be a singular Stefan-Sussmann foliation \cite{Stefan, Sussmann}. In \cite{AS1}, we constructed its holonomy groupoid  and the foliation $C^*$-algebra $C^*(M,\cF)$. In \cite{AS2,AS3} we showed that the $K$-theory of $C^*(M,\cF)$ is a receptacle for natural index problems along the leaves. It is then important to try to give some insight to this $K$-theory, \ie to construct a kind of Baum-Connes assembly map. Of course we cannot hope in general for such a map to be an isomorphism (since it is not always an isomorphism in the regular case, as shown in \cite{HLS}), and even to be defined for every kind of singular foliation. However, in this paper we manage to construct such a map for a quite general class of singular foliations.

\subsection{Some examples}

In order to formulate the assembly map, let us examine a few natural examples. Consider the foliation given by a smooth action of a connected Lie group on a manifold $M$:\begin{enumerate}
\item the action of $SO(3)$ on $\R^3$;
\item the action of $SL(2,\R)$ on $\R^2$;
\item any action of $\R$ (given by a vector field $X$).
\end{enumerate}
In all of these cases, we can compute the $K$-theory thanks to an exact sequence $$0\to C^*(\Omega_0,\cF_{|\Omega_0})\to C^*(M,\cF)\to C^*(M,\cF)|_{Y_1}\to 0.$$  
Here $\Omega_0$ corresponds to ``most regular points'' of the foliation and $Y_1=M\setminus\Omega_0$: in example (a), $\Omega_0=\R^3\setminus \{0\}$, in example (b), $\Omega_0=\R^2\setminus \{0\}$; in example (c) $\Omega_0$ is the interior of the set of points where $X$ vanishes.

In these examples, the connecting map $\partial$ of the $K$-theory exact sequence is easily computed and we can describe precisely $K_*(C^*(M,\cF))$.

In other examples that we discuss here, the ``regularity'' of points variates even more. For instance:
\begin{enumerate}
\item[d)] the action on $\R^n$ of a parabolic subgroup $G$ of $GL(n,\R)$; \eg the minimal parabolic subgroup of upper triangular matrices. 
\item[e)] the action of $PG = G/\R^*$ on $\R P^{n-1}$.
\item[f)] the action of $G\times G$ by left and right multiplication on $M_n(\R)$. (Orbits give the well-known Bruhat decomposition.)
\end{enumerate}

\subsection{Nicely decomposable foliations. Height of a nice decomposition}  Let $(M,\cF)$ be a singular foliation. Its holonomy groupoid may be very singular. On the other hand, this singularity gives rise to open subsets which are  \emph{saturated} for $\cF$ (\ie  union of leaves of $\cF$). We thus obtain ideals of $C^*(M,\cF)$ that we may use to compute the $K$-theory.

For instance, recall that the source fibers of the holonomy groupoid of the foliation as defined in \cite{AS1} were shown by Debord to be smooth manifolds in \cite{Debord2013}. On the other hand, the dimension of these manifolds varies. Let us denote by $\ell_0<\ell_1<\ldots<\ell_k$ the various dimensions occurring (note that $k$ may be infinite as shown in \cite{AZ1}). Let $\Omega_j$ denote the set of points with source fiber dimension $\le \ell_j$. We find an ascending sequence  $\Omega_0 \subseteq \Omega_{1} \subseteq \ldots \subseteq \Omega_{k-1} \subseteq \Omega_k = M$ of saturated open subsets of $M$. This decomposition yields a sequence of two sided ideals $J_j=C^*(\Omega_j,\cF_{|\Omega_j})$ of $C^{\ast}(M,\cF)$.   The quotient $C^*$-algebra $J_j/J_{j-1}$ is the $C^*$-algebra of the restriction of the holonomy groupoid $H(\cF)$ to the locally closed saturated set $Y_j=\Omega_j\setminus \Omega_{j-1}$. The module $\cF$, when restricted to $Y_j$ is finitely generated and projective, and the restriction of $H(\cF)$ to $Y_j$ is a Lie groupoid (when $Y_j$ is a submanifold) so that we may expect a Baum-Connes map for it.

Our computation of the $K$-theory requires an ingredient, which we add as an extra assumption (it is satisfied in the above examples). Let $(M,\cF)$ a singular foliation. We say that $(M,\cF)$ is \emph{nicely decomposable with height $k$} if there is a cover of $M$ by open subsets $(W_j)_{j\in \N,\ j\le k}$, such that for every $j\in \N$, $j\le k$ the restriction of the foliation $\cF$ to each $W_j$ is defined by a Hausdorff Lie groupoid $\cG_j$, the open subset $\Omega_j=\bigcup _{i\le j}W_i$ is saturated and  $\cG_j$ coincides with the holonomy groupoid $H(\cF)$ on the (locally closed) set $Y_j=\Omega_j\setminus \Omega_{j-1}$ (we set $Y_0=W_0=\Omega_0$).  Moreover, we assume that the groupoids $\cG_j$ are linked via morphisms which are submersions $\cG_j \mid_{\Omega_{j-1}\cap W_j} \to \cG_{j-1}$.

If $(M,\cF)$ is nicely decomposable, the quotients $J_j/J_{j-1}$ are given by (restriction to closed sets of) Lie groupoids, for which a Baum-Connes conjecture does exist. This makes the calculation of the $K$-theory of $C^*(M,\cF)$ possible, at least in terms of a spectral sequence.

Singularity height $0$ corresponds to foliations whose holonomy groupoid is a Lie groupoid. Examples (a), (b), (c) are all of singularity height $1$. We will use the decomposition given by the dimensions of the fibers. In examples (a) and (b), the dimensions of the fibers are $\ell_0=2$ and $\ell_1=3$; in example (c), these dimensions are $\ell_0=0$ and $\ell_1=1$. Examples (d), (e) and (f) have higher singularity height.

\subsection{The left hand side}\label{sec:introlhs}

We construct the left hand side in two steps. 
\begin{itemize}
\item The first one consists of replacing the holonomy groupoid $H(\cF)$ by a slightly more regular one $G$ whose (full) $C^*$-algebra is $E$-equivalent to the foliation one. This groupoid is constructed via a mapping cone construction in the height one case and via a telescope construction in the higher singularity case.
\item In the second step we construct a left hand side for the ``telescopic groupoid'' $G$ which is the $K$-theory of a proper $G$-algebra in a generalized sense, together with a Dirac type construction.
\end{itemize}

\subsubsection{A telescopic construction}

\paragraph{A mapping cone construction in the height 1 case.}
Let us explain our strategy more explicitly in the case of a foliation admitting a singularity height 1 decomposition: In this case, we obtain a diagram of \emph{full} $C^{\ast}$-algebras (with $\cG=\cG_1$):
\[
\xymatrix{
0\ar[r] & C^{\ast}(\cG_{\Omega_0}) \ar[r]^{i_{\cG}} \ar[d]_{\pi_{\Omega_0}} & C^{\ast}(\cG) \ar[r] \ar[d]_{\pi} & C^{\ast}(\cG_{Y_1})  \ar@{=}[d] \ar[r]&0 \\
0\ar[r] &C^{\ast}(\Omega_0,\cF_{|\Omega_0}) \ar[r]^{i_{\cF}} & C^{\ast}(M,\cF)  \ar[r] & C^{\ast}(M,\cF)|_{Y_1} \ar[r]&0
}
\]
The singularity height one assumption means that the holonomy groupoid of the restriction $\cF_{|\Omega_0}$ of the foliation $\cF$ to $\Omega_0$ is a Lie groupoid $\cG_0$ and $C^{\ast}(\Omega_0,\cF_{|\Omega_0})=C^*(\cG_0)$. The lines of this diagram  are exact at  the level of \emph{full} $C^*$-algebras.

 Since $\cG$ defines $\cF$, it is an atlas in the sense of \cite{AS1}, so $H(\cF)$ is a quotient of $\cG$. Whence the two extensions are connected by the map $\pi$ and its restriction $\pi_{\Omega_0}$, which is integration along the fibers of this quotient map $\cG\to H(\cF)$. From this diagram, we conclude that the algebra $C^*(M,\cF)$ is equivalent in $E$-theory (up to a shift of degree) with the mapping cone of the morphism $$(i_{\cG},\pi_{\Omega_0}):C^{\ast}(\cG_{\Omega_0})\to C^{\ast}(\cG)\times C^{\ast}(\Omega_0,\cF_{|\Omega_0}).$$
 
\paragraph{Foliations of height $\ge2$.} As far as singular foliations with nice decompositions of arbitrary (bounded or not) singularity height is concerned, we show that the strategy developed for the singularity height-1 case can be generalized. In particular, $C^{\ast}(M,\cF)$ is $E$-equivalent to a ``telescopic'' $C^{\ast}$-algebra whose components are Lie groupoids. In fact we see that these telescopes can just be treated as mapping cones. 

\bigskip Now let us see how the above apparatus can be used to formulate the Baum-Connes assembly map for singular foliations. It suffices to explain the idea for the height $1$ case.

\paragraph{Longitudinally smooth groupoids.} The above mapping cone and the telescopic algebra constructed here, are based on morphisms of Lie groupoids which are smooth submersions and open inclusions at the level of objects. These $C^*$-algebras are immediately seen to be the $C^*$-algebras of a kind of groupoids which generalize both Lie groupoids and singular foliation groupoids: \emph{longitudinally smooth groupoids.}

\subsubsection{A left hand side for the telescopic groupoid}

\paragraph{Setting of the problem.} 
Before we outline our construction of the left hand side, let us make a remark.  Recall that in \cite{Tu}, Jean-Louis Tu defined a left hand side and a Baum-Connes morphism for Lie groupoid $C^*$-algebras of the form $K_*^{top}(\cG)\to K_*(C^*(\cG))$. In order to construct a left hand side for this mapping cone, we need to find a ``left hand side for the morphism'' $(i_{\cG},\pi_{\Omega_0})$. We will in fact not only need it as a morphism at the level of groups $K_*^{top}$, but we really need to construct it as a $KK$-element.

The difficulty lies with the understanding of the left-hand side of the mapping cone of the surjective homomorphism $\pi_{\Omega_0}:C^{\ast}(\cG_{\Omega_0}) \to C^{\ast}(\Omega_0,\cF_{|\Omega_0})$. We treat this by deploying the Baum-Douglas formulation given in \cite{BaumConnes}, \cite{BaumDouglas1}, \cite{BaumDouglas2}. At this point we will need further assumptions on the groupoids $\cG$ and $\cG_1$, namely that their classifying spaces of proper actions are smooth manifolds, to make sure that the Baum-Connes morphisms are naturally given by $KK$-elements. (In appendix \ref{app:Enotmanifold} we show how this assumption can be weakened.)

\paragraph{Actions of the telescopic groupoid.} In order to define the left hand side for the telescopic groupoid, we follow the Lie groupoid case:
\begin{itemize}
\item For every longitudinally smooth groupoid $G$, one  defines $G$-algebras very much in the spirit of \cite{AS1}: algebraic conditions are stated at the level of the groupoid; topological ones at the level of bisubmersions which can be thought of as ``smooth local covers'' of $G$ (\cf \cite{AS1}). We define the (full and reduced) crossed product for every $G$-algebra. 
\item One may define a notion of \emph{proper $G$-algebra}: a $G$-algebra is said to be proper if its restriction to the groupoids corresponding to the various strata is proper in the usual sense. In particular, one may define actions on spaces and proper actions on spaces.
\item We define Le Gall's equivariant $KK$-theory (\cite{LeGall}) in the context of longitudinally smooth $G$-algebras, despite the topological pathology of the holonomy groupoid $G$. We extend the equivariant Kasparov product to this case.
\item We may then construct the left hand side and the assembly map for the telescopic algebras of a nice decomposition of a singular foliation. To that end we still need to assume for $(M,\cF)$ that the Lie groupoids of its decomposition admit smooth manifolds as classifying spaces for proper actions.
\item Actually this point of view, allows to construct a Baum-Connes map \emph{with coefficients} for every $G$ algebra. It is easily seen that, in the case of nicely decomposable foliations, our Baum-Connes map with coefficients in proper spaces or algebras is an isomorphism.
\end{itemize}

\paragraph{The main result.}
We show then that in cases as above the Baum-Connes map can be constructed canonically. Namely, we prove the following:
\begin{thm}\label{thm:SingBC}
\begin{enumerate}
\item If $(M,\cF)$ admits a nice decomposition by Lie groupoids whose classifying space for proper action is a manifold, then there is a well defined left hand side and one may construct a Baum-Connes assembly map.
\item If moreover the groupoids of the nice decomposition are  amenable and Hausdorff, then the Baum-Connes map is an isomorphism.
\end{enumerate}
\end{thm}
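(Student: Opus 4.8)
The plan is to prove (1) by carrying out the two‑step construction of the left hand side sketched in the introduction, and (2) by an induction on the singularity height $k$ whose conceptual engine is that the assembly map is already an isomorphism with coefficients in a proper $G$‑algebra.

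\emph{Step 1: reduction to a telescopic groupoid.} Fix a nice decomposition $(W_j)_{j\le k}$ and assemble from the Lie groupoids $\cG_j$ and the submersion morphisms $\cG_j|_{\Omega_{j-1}\cap W_j}\to\cG_{j-1}$ the telescopic (longitudinally smooth) groupoid $G$. The first task is to show that the full $C^*$-algebra $C^*(G)$ is $E$-equivalent, up to the relevant shift of degree, to $C^*(M,\cF)$. In height $1$ this is the mapping‑cone identification read off the commuting diagram of extensions displayed above: $C^*(M,\cF)$ is $E$-equivalent to the mapping cone of $(i_{\cG},\pi_{\Omega_0})\colon C^*(\cG_{\Omega_0})\to C^*(\cG)\times C^*(\Omega_0,\cF_{|\Omega_0})$. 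For general $k$ one iterates, each stage of the telescope being treated as a mapping cone over the previous one.

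\emph{Step 2: the left hand side and the assembly map.} Next I would develop the category of $G$-algebras for the longitudinally smooth groupoid $G$ in the spirit of \cite{AS1} — algebraic axioms at the level of $G$, continuity and smoothness axioms at the level of the bisubmersions covering $G$ — together with their full and reduced crossed products, and single out the \emph{proper} $G$-algebras as those whose restriction to each stratum (a genuine Lie groupoid) is proper in the usual sense. Then import Le Gall's equivariant $KK$-theory into this setting and extend the Kasparov product; because the telescope is glued from Lie groupoids this can be done stratum by stratum and then globalized. The left hand side is defined as $K_*$ of the crossed product by $G$ of a universal proper $G$-algebra, obtained through a Dirac‑type construction, and the assembly map is the descent morphism landing in $K_*(C^*(G))\cong K_{*+\bullet}(C^*(M,\cF))$. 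The hypothesis that each classifying space $\underline{E}\cG_j$ is a smooth manifold is used precisely here: to promote the Baum-Connes morphism of the surjection $\pi_{\Omega_0}$ — and of each quotient map $\cG_j\to H(\cF)|_{W_j}$ — from a map on $K^{top}_*$ to an actual $KK$-class, via the Baum-Douglas geometric‑cycle model of \cite{BaumConnes,BaumDouglas1,BaumDouglas2}, whose cycles are $K$-oriented smooth maps out of manifolds. (Appendix \ref{app:Enotmanifold} removes this assumption by an approximation argument.) This proves (1).

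\emph{Step 3: the isomorphism under amenability.} Assume in addition that each $\cG_j$ is amenable and Hausdorff. Amenability gives $C^*(\cG_j)=C^*_r(\cG_j)$ and, by Tu \cite{Tu}, that $K^{top}_*(\cG_j)\to K_*(C^*(\cG_j))$ is an isomorphism; Hausdorffness keeps the $C^*$-algebras and the $E$-equivalences of Step 1 well behaved. Since the assembly map is already an isomorphism with proper coefficients, one reduces to comparing the two sides along the canonical filtration and induct on $k$: the base case $k=0$ is Tu's theorem for $\cG_0$. For the inductive step use the ideal $J_{k-1}=C^*(\Omega_{k-1},\cF_{|\Omega_{k-1}})$ with quotient $J_k/J_{k-1}=C^*(H(\cF)|_{Y_k})$, the $C^*$-algebra of the restriction to the closed saturated set $Y_k$ of the Lie groupoid $\cG_k$; the left hand side carries a matching splitting into its stage-$(k-1)$ part and its $Y_k$-part, and the assembly maps intertwine the two six‑term exact sequences. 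Baum-Connes holds for the stage-$(k-1)$ telescope by the inductive hypothesis and for the $Y_k$-stratum by Tu together with the usual excision argument for restriction to a closed saturated subset, so the five lemma yields it for stage $k$. The unbounded case $k=\infty$ follows by continuity: $C^*(M,\cF)=\varinjlim J_j$, $K_*$ commutes with inductive limits, and this is matched on the left by the colimit of the stagewise left hand sides.

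\emph{The main obstacle.} The hard part is Step 2 — making the Baum-Connes morphism of $\pi_{\Omega_0}$ (and of its higher analogues) into a genuine $KK$-element, and showing that the resulting telescopic left hand side is functorial and carries six‑term exact sequences compatible with assembly, all while absorbing the topological pathologies of $H(\cF)$ (non‑Hausdorff source fibres of varying dimension) into the longitudinally‑smooth‑groupoid formalism; this is where the manifold hypothesis on classifying spaces does real work. By comparison, everything in Step 3 — amenability forcing full $=$ reduced, Tu's theorem on the strata, and the five‑lemma induction — is comparatively routine.
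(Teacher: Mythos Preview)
Your outline is broadly correct and you have correctly located the genuine difficulty in Step~2. Two points of comparison with the paper's argument are worth making.

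\medskip
\textbf{On Step 2.} Your description remains abstract (``Dirac-type construction'', ``descent morphism''), whereas the paper's solution is concrete and geometric: rather than building the left hand side out of an equivariant $KK$-machinery for the telescopic groupoid, it realises the assembly map through an \emph{adiabatic telescopic groupoid} $\bG$ obtained by gluing \emph{double} deformation-to-the-normal-cone groupoids $\DNC^2\big((G_0)_{p_0}^{p_0},(M'_1\rtimes G_0)_q^q,\,M_0\rtimes G_0\big)$ along the pattern of the telescope. The left hand side is then $K_*(C^*(\bG_0))$ (a type~I algebra, since the groupoids at parameter~$0$ are proper) and the assembly map is $[\rho_0]^{-1}\otimes[\rho_1]\otimes[\cE]$, with $\rho_i$ evaluation at the endpoints of the adiabatic parameter. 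This is what turns the wrong-way map $\widehat{q_!}$ associated to $\pi_{\Omega_0}$ into an honest $KK$-element compatible with the mapping-cone structure, the point you flag as the main obstacle.

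\medskip
\textbf{On Step 3.} Here your route genuinely differs. You propose a five-lemma induction along the filtration $J_0\subset J_1\subset\cdots$, invoking Tu on each stratum $Y_k$. The paper instead argues in one stroke: the kernel of $\rho_1:C^*(\bG)\to C^*(\bG_1)$ is a $C_0(0,n{+}1)$-algebra whose fibres over $(k,k{+}1)$ and over $\{k\}$ are, respectively, the adiabatic-groupoid kernels for $\cG_k$ and for $\cG'_k$; these are $E$-contractible exactly when the full Baum--Connes map is an isomorphism for $\cG_k$ and $\cG'_k$, and $E$-contractibility of all fibres forces $E$-contractibility of the total kernel (by the inductive-limit/telescope lemma). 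Your approach would also work, but it presupposes precisely the compatibility of assembly with the six-term sequences that the paper's $\bG$-construction is designed to deliver (see the diagram in the ``Justifying why this is a Baum--Connes map'' discussion); you should not call that step ``comparatively routine'' before that compatibility is in hand. Note also a small slip: the stratum algebra you need is a quotient of $C^*(\cG_k)$, and $Y_k$ need not be a submanifold, so ``Tu on the $Y_k$-stratum'' must be read as Baum--Connes for the Lie groupoid $\cG_k$ over the open set $W_k$ followed by excision, not for $\cG_k|_{Y_k}$ directly.
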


Note that examples (a) and (c) above are amenable; although example (b) is not, it is `strongly $K$-amenable' and the Baum-Connes conjecture (for the \emph{full} version) holds for it.

Note also that example (c) is not exactly covered by our theorem since the groupoid $\cG_0$ is not assumed to be Hausdorff. However, the Baum-Connes conjecture holds also in this case

For the examples of larger singularity height, described in examples (d), (e) and (f) note that: As the minimal parabolic subgroup of $GL(n,\R)$ is amenable, Theorem \ref{thm:SingBC} implies that the Baum-Connes conjecture holds.

Let us point out that our constructions of the equivariant $KK$-theory  could in a way be bypassed, but may have its own interest. 
In particular, we give a simple quite general formulation and proof for the existence of the Kasparov product, which applies in all known equivariant contexts (groups, group actions \cite{Kasparov1988}, groupoids \cite{LeGall}, Hopf algebras \cite{BaajSkandalis}).

\paragraph{Trying to weaken our assumptions.}
The assumption on the classifying spaces is quite natural. All the groupoids given by Lie group actions admit manifolds as classifying spaces for proper actions - and this assumption is stable by Morita equivalence. In this way it is satisfied by all  the (Hausdorff) groupoids that appear in the examples that we discuss in this work. Nevertheless, it is quite tempting to try to get rid of it. 
In appendix  \ref{app:Enotmanifold} we explain how it can be replaced by a quite weaker rather technical one: assumption  \ref{ass:homotopy}, which could be true in general.

\paragraph{Structure of the paper.}\begin{itemize}
\item In section \ref{sec:prel} we introduce the notion of singularity height for a singular foliation and define nicely decomposable foliations. We also explain the examples mentioned in the beginning of this introduction. 
\item Section \ref{sec:convalgheightone} focuses on nicely decomposable foliations with singularity height one. We give the construction of the associated mapping cone $C^{\ast}$-algebra and prove that it is $E$-equivalent to the foliation $C^{\ast}$-algebra. 
\item In section \ref{sec:higherheight} we extend this construction and result to foliations of arbitrary singularity height, replacing mapping cones with telescopes. 
\item Section \ref{sec:longsmoothKK} defines longitudinally smooth groupoids, their actions and constructs the associated $KK$-theory. 
\item The crucial section is \S \ref{sec:BCtelescope}, where we formulate the Baum-Connes conjecture (left-hand side and Baum-Connes map) for the telescopic algebra, assuming the classifying spaces of proper actions of the groupoids associated with the nice decomposition of $(M,\cF)$ are smooth manifolds. The proof of Theorem \ref{thm:SingBC} can be found there.
\item  The rest of the paper gives the explicit calculation of the $K$-theory for examples (a), (b) and (c). Specifically, section \ref{sec:computelinearactions} is concerned with the linear actions in examples (a) and (b). 
\item Flows of vector fields (example (c)) are treated in section \ref{sec:X}; in particular, we treat separately vector fields with periodic points. 
\item Last, in appendix \ref{appendix} we explain how to remove the assumption that the classifying spaces of proper actions are smooth manifolds.
\end{itemize}

\begin{notation}
Let $(M,\cF)$ be a foliation. We will denote the  (minimal  \ie the groupoid associated with the path holonomy atlas - \cf \cite{AS1}) holonomy groupoid by $H(\cF)$ (or $H(M,\cF)$ when needed). We will denote by $C^*(M,\cF)$ and $C_{red}^*(M,\cF)$ its \emph{full} and \emph{reduced} $C^*$-algebras.

We will mainly use the \emph{full} $C^*$-algebra. This is justified by the two following reasons:
\begin{itemize}
\item Constructing a Baum-Connes map for the full foliation algebra automatically gives the one for the reduced version. Recall that the Baum-Connes map, in the regular case, factors through the full version of the foliation algebra.
\item All our constructions are based on sequences of groupoid $C^*$-algebras, which are always exact at the full $C^*$-algebra level, and may fail to be exact at the reduced level (see \S \ref{sec:exactness}).
\end{itemize}
\end{notation}

\noindent{\bf Acknowledgements:}
We would like to thank Nigel Higson and Thomas Schick for various discussions and suggestions.

\section{Nicely decomposable foliations}\label{sec:prel}

\subsection{Notations and remarks}

Let $M$ be a smooth manifold and $\X_c(M)$ the $C^{\infty}(M)$-module of compactly supported vector fields. In \cite{AS1},  we defined a singular foliation on $M$ to be a $C^{\infty}(M)$-submodule $\cF$ of $\X_c(M)$ which is locally finitely generated and satisfies $[\cF,\cF] \subseteq \cF$.

Given a point $x \in M$ let $I_x = \{f \in C^{\infty}(M) : f(x)=0\}$ and recall from \cite{AS1} the fiber $\cF_{x} = \cF/I_x\cF$. The map $M \ni x \mapsto  \dim(\cF_x)$ is upper semicontinuous (see \cite[Prop. 1.5]{AS1}).

When this dimension is constant (continuous if $M$ is not assumed to be connected), \ie when the module $\cF$ is projective, the foliation is said to be \emph{almost regular} and the holonomy groupoid $H(\cF)$ was proved to be a Lie groupoid by Debord in \cite{DebordJDG}. 

In the present paper, we will deal with cases where the dimension of $\cF_x$ is not constant. The number of possible dimensions measures the singularity of the foliation. We will give a definition of this \emph{singularity height}, more appropriate for our purposes in definition \ref{dfn:goodgpd}.

By semicontinuity, the subsets $O_\ell=\{x\in M;\ \dim(\cF_x)\le \ell\}$ are open. They are \emph{saturated}, \ie unions of leaves of $\cF$.

We will deal with restrictions of the foliation to open sets. We will use the following remark:

\begin{remark}
Let $(M,\cF)$ be a foliation. Let $V$ be an open subset of $M$.\begin{itemize}
\item The holonomy groupoid of the restriction $\cF_{|V}$ to $V$ is the $s$-connected component of the restriction $H(\cF)_V^V=\{z\in H(\cF);\ t(z)\in V,\ \hbox{and}\ s(z)\in V\}$ to $V$.
\item If $V$ is saturated, then $H(\cF_{|V})=H(\cF)_V^V$.
\end{itemize}
Actually an analogous statement holds for the pull back foliation $f^{-1}(\cF)$ by a smooth map $f:V\to M$ \emph{transverse to $\cF$} (\cf \cite[\S 1.2.3]{AS1}):  $H(f^{-1}(\cF))$ is the $s$-connected component of $H(\cF)_f^f=\{(v,z,w)\in V\times H(\cF)\times V;\ t(z)=f(v),\ \hbox{and}\ s(z)=f(w)\}$. If moreover $f$ is a submersion whose image is saturated with connected fibers, then $H(f^{-1}(\cF))=H(\cF)_f^f$.
\end{remark}

Now let us discuss the notation for $C^*$-algebras we will be using in this sequel as far as restrictions are concerned. If $\cG$ is a locally compact groupoid (with Haar measure) and $Y$ is a locally closed saturated subset of $\cG_0$, then $\cG_Y=\{x\in \cG;\ s(x)\in Y\}$ is also a locally closed groupoid and we can define its $C^*$-algebra. We put $C^*(\cG)_{|Y}=C^*(\cG_Y)$. The same construction for foliation algebras will be useful in our context:

\begin{notation} Let $(M,\cF)$ be a (singular) foliation.
\begin{enumerate}
\item Let $\Omega \subset M$ be a \emph{saturated} open subset. Then $$C^{\ast}(M,\cF)|_{\Omega} : = C_0(\Omega)C^{\ast}(M,\cF)=C^*(\Omega,\cF_{|\Omega})$$ is the foliation $C^*$-algebra of the restriction of $\cF$ to $\Omega$. The same holds for the reduced $C^*$-algebras.
\item If $Y \subset M$ is a saturated closed subset  then the \emph{full} $C^{\ast}(M,\cF)|_{Y}$ is the quotient of $C^{\ast}(M,\cF)$ by $C^{\ast}(M,\cF)|_{M\setminus Y}$.\\ Note that the natural definition for the reduced one is to take the quotient of $C^{\ast}(M,\cF)$ corresponding to the regular representations at points of $Y$, \ie representation on $L^2(H(M,\cF)_y)$ for $y\in Y$.
\item If $Y \subset M$ is a saturated locally closed subset then $Y$ is open in its closure $\overline{Y}$ and the closed subset $\overline{Y}\setminus Y$ is saturated. Let $U=M\setminus(\overline{Y}\setminus Y)$. We denote $C^{\ast}(M,\cF)|_Y$ the quotient of $C_0(U)C^{\ast}(M,\cF)$ by $C^{\ast}(M,\cF)|_{M\setminus \overline{Y}}$. In other words, $C^*(M,\cF)_{|Y}=(C^*(M,\cF)_{|U})_{|Y}$.
\end{enumerate}
\end{notation}

\subsection{Foliations associated with Lie groupoids}

In this sequel we will consider foliations defined from Lie groupoids (at least locally - \cf \S\ref{sec:goodgpd}).

Let us make a few observations regarding singular foliations defined by Lie groupoids. 

Every Lie algebroid $A$ with base $M$ and therefore every Lie groupoid $(t,s):\cG\rra M$ defines a foliation: Indeed, the anchor map $\sharp : A \to TM$ is a morphism of Lie algebroids, whence $\sharp(\Gamma_c A) \subset \X_c(M)$ is a singular foliation.

Let $\cG$ be  a (locally Hausdorff) Lie groupoid over a manifold $M$ and $\cF$ the associated foliation. Up to replacing $\cG$ by its $s$-connected component (which is an open subgroupoid of $\cG$ with the same algebroid - and thus defines the same foliation on $M$) we may assume that $\cG$ is $s$-connected, \ie the fibers of the source map $s:\cG\to M$ are connected.
Then the groupoid $\cG$ is an atlas for our foliation (in the sense of \cite[Def. 3.1]{AS1}). As $\cG$ is assumed $s$-connected, it defines the path holonomy atlas (\cite[example 3.4.3]{AS1}). The holonomy groupoid $H(M,\cF)$ is a quotient of $\cG$ by the equivalence relation defined in \cite[Prop. 3.4.2]{AS1}.

In order to compute this quotient, we will use a Lemma from \cite{AZ1}. 

Let $\gamma\in \cG$ and write $x=s(\gamma)$. Note that if $q(\gamma)$ is a unit, then $t(\gamma)=x$. Choosing a bi-section through $\gamma$ we obtain a local diffeomorphism $g$ of $M$ which acts on the tangent bundle $T_{x}M$ and fixes the tangent to the leaf $F_{x}$. It therefore acts on $N_{x}=T_xM/F_x$. This action only depends on $\gamma$. Denote it by $\nu(\gamma)\in GL(N_x)$.

Now, it was shown in \cite{AZ1} that there is an action of $H(\cF)$ on this ``bundle'' of normal spaces. As an immediate consequence, we find:

\begin{lemma}\label{lem:holgpd}
If $q(\gamma)$ is a unit, then $\nu(\gamma)=\id_{N_x}$.\hfill$\square$
\end{lemma}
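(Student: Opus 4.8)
The plan is to show that if $q(\gamma)$ is a unit of $H(\cF)$, then the induced normal action $\nu(\gamma)$ must be trivial, by exploiting the $H(\cF)$-equivariance of the normal bundle action established in \cite{AZ1}. The key observation is that $\nu$ factors through the quotient map $q:\cG\to H(\cF)$ — indeed, the statement explicitly records that $\nu(\gamma)$ depends only on $\gamma$, and (by the construction in \cite{AZ1}) only on the class of $\gamma$ in the holonomy groupoid, since the normal action is the one carried by $H(\cF)$ itself. So there is a well-defined map $\bar\nu:H(\cF)\to GL(N)$ lifting $\nu$, i.e. $\nu=\bar\nu\circ q$.

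First I would make precise that $\bar\nu$ is a groupoid morphism over $M$: it sends a holonomy class $h$ with $s(h)=x$, $t(h)=y$ to an isomorphism $N_x\to N_y$, compatibly with composition. This is exactly the content of ``$H(\cF)$ acts on the bundle of normal spaces'' from \cite{AZ1}. Then, if $q(\gamma)$ is a unit, say $q(\gamma)=1_x\in H(\cF)$ with $x=s(\gamma)$ (note $t(\gamma)=x$ as well, as observed just before the lemma), we get $\nu(\gamma)=\bar\nu(q(\gamma))=\bar\nu(1_x)=\id_{N_x}$, since a groupoid morphism (or groupoid action) sends units to units, i.e. to the identity automorphism of the corresponding fiber.

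The one genuine point to justify carefully — and the main obstacle — is the claim that $\nu$ really descends through $q$, i.e. that two elements $\gamma,\gamma'\in\cG$ with $s(\gamma)=s(\gamma')$ and the same holonomy class induce the same action on $N_x$. This is where the result of \cite{AZ1} does the work: the normal representation is intrinsic to the foliation and is carried by $H(\cF)$, not merely by the chosen atlas $\cG$; equivalently, the kernel of $q$ acts trivially on normal spaces. I would cite the precise statement from \cite{AZ1} (the existence of the $H(\cF)$-action on $N=TM/F$ refining the bisection-induced action on $\cG$) and observe that the compatibility of this action with the quotient map $\cG\to H(\cF)$ is built into its construction. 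Granting that, the lemma is immediate from the unit-preservation property of the action, with essentially no further computation.

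Alternatively, if one prefers to avoid invoking the full equivariance statement, a direct argument is available: if $q(\gamma)=1_x$ then by \cite[Prop. 3.4.2]{AS1} $\gamma$ lies in the kernel of the quotient defining $H(\cF)$, which means $\gamma$ is, locally, carried by a bisection that induces the identity on a transversal to the leaf through $x$ (the holonomy of the corresponding leafwise path is trivial); such a bisection fixes $F_x$ and induces the identity on $N_x=T_xM/F_x$ by definition of holonomy. Either route closes the argument.
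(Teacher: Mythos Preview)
Your proposal is correct and follows exactly the paper's approach: the lemma is stated as an immediate consequence of the fact, cited from \cite{AZ1}, that the normal action descends to an action of $H(\cF)$, so that units act by the identity. Your alternative direct argument via trivial holonomy is a nice bonus but not needed for matching the paper.
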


\subsection{Nicely decomposable foliations}\label{sec:goodgpd}

We now present the constraints that we will put on our foliations: We say that the foliation is \emph{nicely decomposable} if it admits a nice decomposition in the following sense.

\begin{definition}\label{dfn:goodgpd}
Let $(M,\cF)$ be a singular foliation and let $k\in \N\cup \{+\infty\}$. A \emph{nice decomposition} of $(M,\cF)$ of \emph{singularity height $k$} is given by
\begin{enumerate}
\item a sequence $(W_j)_{0\le j< k+1}$ of open sets of $M$ such that the open set $\Omega_j=\bigcup_{\ell\le j}W_\ell$ is saturated and $\bigcup_{j< k+1}W_j=M$ (with the convention $+\infty+1=+\infty$).\\
Put $Y_0=\Omega_0$ and, for $j\ge 1$,  $Y_j=\Omega_j\setminus \Omega_{j-1}$. 
\item a sequence of Lie groupoids $\cG_j \gpd W_j$ defining the restriction of $\cF$ to $W_j$, and such that $\cG_j |_{Y_j} = H(\cF) |_{Y_j}$;
\item morphisms of Lie groupoids $q_j : \cG_{j}|_{\Omega_{j-1}\cap W_j} \to \cG_{j-1}$  (for $j>0$) which are submersions, and which  at the level of objects, are just the inclusion $\Omega_{j-1}\cap W_j\to W_{j-1}$.
\end{enumerate}
\end{definition}

\begin{remarks}\label{rem:nicedecom}
\begin{enumerate}
\item  If $(M,\cF)$ is an almost regular foliation then $H(\cF)$ is a Lie groupoid as shown in \cite{AS1} (it coincides with the one constructed by Debord in \cite{DebordJDG}). In our current context, the decomposition sequence of such a foliation has singularity height zero; its realization is $H(\cF)$ itself. We will not be concerned with situations as such in this sequel. Truly singular examples of nicely decomposable singular foliations arise when the singularity height of the decomposition is 1 or larger. 

\item  By definition $W_0=\Omega_0$ and the restriction of $H(\cF)$ coincides with $\cG_0$. It follows that the restriction of $\cF$ to $\Omega_0$ is almost regular, which means that $\Omega_0$ is contained in the (open) set of points where $\dim \cF$ is continuous, \ie has a local minimum.

\item In all our examples, $W_j=\Omega_j$ and $\Omega_j$ may be constructed using the dimension of the fibers:

For $\ell\in \N$, put $$O_\ell=\{x\in M,\ \dim (\cF_x)\le \ell\}$$ 

Denote by $\ell_0<\ell_1<...<\ell_j$ for $j<k+1$ the various possible dimensions. For $j=0,1,\ldots,k$ put  $\Omega_j= O_{\ell_j}$.

Note that in \cite{AZ1} is given an example of a foliation where this $k$ is infinite.
\end{enumerate}
\end{remarks}

\subsection{Examples of nicely decomposable foliations}

We now give a few examples of nice decompositions of foliations.

\subsubsection{Examples of height one}\label{subsubsec:height1}

\begin{remark}\label{rem:length1}
 In the case of height one, we have $W_0=\Omega_0$ and the restriction  and $\cG_0$ is the holonomy groupoid of the restriction of $\cF$ to $\Omega_0$. We therefore just need to specify the set $\Omega_0$ and the Lie groupoid $\cG_1\gpd W_1$ defining the foliation $\cF$ on an open subset $W_1$ containing the complement $Y_1=M\setminus \Omega_0$ of $\Omega_0$ and such that the restriction of $\cG$ to $Y_1$ coincides with that of $H(\cF)$. 
 
 Actually, in our examples $W_1=M$.
\end{remark}

\begin{exs}\label{exs:basic}
We will give here examples of singularity height $1$ associated with Lie group actions. Some examples of larger singularity height are computed in \cite{AH}. In the sequel of this paper, we will calculate the associated $K$-theory explicitly for the following examples.
\begin{enumerate}
\item \label{examp:b}Let $M = \R^3$ and consider the foliation $\cF$ defined by the image of the (infinitesimal) action of $SO(3)$ on $\R^3$ by rotations. The leaves are concentric spheres in $\R^3$ with one singularity at $\{0\}$. Let $\cG$ be the action groupoid $\R^3 \rtimes SO(3) \gpd \R^3$. Since $SO(3)$ is simple, the restriction of $H(\cF)$ to $0$, which is a quotient of $SO(3)$ has to be $SO(3)$ (we may also use Lemma \ref{lem:holgpd} to prove this result). The restriction of $\cF$ to $\R^{3} \setminus \{0\}$ is really a regular foliation - and in fact the fibration $S^2 \times \R_{+}^{\ast} \to \R_{+}^{\ast}$, whence the holonomy groupoid of $\cF$ is $$H(\cF)=(S^2 \times S^2 \times \R_{+}^{\ast})\bigcup \{0\}\times SO(3).$$  It follows that the foliation has a nice decomposition of singularity height 1, namely $W_1 = \R^3$, $\cG_1=\cG$ and $\Omega_0 = \R^3 \setminus \{0\}$.

\item \label{examp:c}Let $M=\R^2$ and consider the action of $SL(2,\R)$. It has two leaves, namely $\{0\}$ and $\R^2 \setminus \{0\}$. Using again Lemma  \ref{lem:holgpd}, the associated holonomy groupoid is seen to be $$H(\cF)=(\R^2\setminus\{0\} \times \R^2\setminus\{0\}) \bigcup \{0\} \times SL(2,\R)$$  Considering the action groupoid $\cG = \R^2 \rtimes SL(2,\R)$ we obtain the singularity height 1 nice decomposition $\Omega_1 = \R^2$, $\cG_1=\R^2\rtimes SL_2(\R)$ and $\Omega_0 = \R^2 \setminus \{0\}$, $\cG_0=\Omega_0\times \Omega_0$.

\item There are many singular foliations of singularity height 1 arising from group actions which have nice decompositions. For instance, take $n\ge4$ instead of $3$ in  example (\ref{examp:b}) or $n\ge3$ instead of $2$ in  example (\ref{examp:c}). 

We may also consider the action of $GL(2,\R)$ on $\R^2$. The associated holonomy groupoid is $$H(\cF)=(\R^2\setminus\{0\} \times \R^2\setminus\{0\}) \bigcup \{0\} \times GL^+(2,\R)$$ where $GL^+(2,\R)$ denotes $2\times 2$ matrices with positive determinant. Considering the action groupoid $\cG = \R^2 \rtimes GL^+(2,\R)$ we obtain $\Omega_1 = \R^2$ and $\Omega_0 = \R^2 \setminus \{0\}$.  We can of course replace $2$ by $n$ also in this situation.

Another example as such comes from the action of $SL(n,\C)$ on $\C^n$: Its holonomy groupoid is $$H(\cF)=(\C^n\setminus\{0\} \times \C^n\setminus\{0\}) \bigcup \{0\} \times SL(n,\C)$$ Considering the action groupoid $\cG = \C^n \rtimes SL(n,\C)$ we have $\Omega_1 = \C^n$, $\Omega_0 = \C^n \setminus \{0\}$.

\item \label{example:actionofR}We end with an example of quite different flavor. 

Let $M$ be a manifold endowed with a smooth action $\alpha$ of $\R$. Let $\cG_1=M\rtimes_\alpha \R$ be the associated action groupoid, and $\cF$ the associated foliation. 

Denote by $Fix (\alpha)$ the set of fixed points of $\alpha$, $W=\text{Int}{(Fix (\alpha))}$ its interior and $V=M\setminus Fix (\alpha)$ its complement. Let $x\in M$.\begin{itemize}
\item If $x\in W$, then $\cF_x=0$;
\item For $x\in V$, the dimension of $\cF_x$ is $1$. By semi-continuity, $\dim \cF_x=1$ for $x\in \overline V$.
\end{itemize}
Let $\Omega_0$ be the set of continuity points of $\dim \cF$. Its complement $Y_1$ is the boundary $\partial W$ of $W$. The restriction of $\cF$ to the open set $\Omega_0$ is almost regular. 
 
We show that the morphism $M\rtimes _\alpha\R\to H(\cF)$ is injective over $Y_1$. We thus have a nice decomposition $H(\Omega_0,\cF_{|\Omega_0})\gpd \Omega_0$,  and $M\rtimes_\alpha  \R\gpd M$.

This is done using classical facts based on the period bounding lemma (\cf \cite{AbrahamRobbin}) that we recall here:
\begin{lemma}[Period bounding]\label{lem:periodbound}
Let $X$ be a compactly supported $C^r$-vector field on a $C^r$-manifold $M$ with $r \geq 2$. There is a real number $\eta > 0$ such that, for any $x \in M$, either $X(x)=0$ or the prime period $\tau_x$ of the integral curve of $X$ passing through $x$ is $\tau_x > \eta$.\hfill $\square$
\end{lemma}

Put $P=\{(x,u) \in M \times \R;\ \alpha_u(x)=x\}$. It is  obviously a closed subset of $M \times \R$ and the restrictions of the source and target maps to $P$ coincide. By definition of the holonomy groupoid, an element $(x,u)\in \cG_1=M\rtimes \R$ is a trivial element in $H(\cF)$ if and only if there is an identity bisection through it \ie if there exists an open neighborhood $U$ of $x$ and a smooth function $f:U\to \R$ such that $f(x)=u$ and $(z,f(z))\in P$ for all $z\in U$.

Let $Per(\alpha)$ be the set of \emph{stably periodic points}, \ie the set of $x\in M$ such that there exists an open neighborhood $U$ of $x$ and a smooth function $f:U\to \R^*$ such that $(y,f(y))\in P$ for all $y\in U$. It is the set of $x\in M$ such that $\{(x,u);\ u\in \R\}\to H(\cF)$ is not injective.

Obviously $W\subseteq Per(\alpha)$. 

\begin{prop}\label{prop:nicedecompoactionofR}
The set  $Y_1\cap Per(\alpha)$ is empty.
\end{prop}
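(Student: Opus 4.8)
The plan is a proof by contradiction whose driving dichotomy is the following: a periodic orbit sitting close to a point of $Y_1=\partial W$ would have period bounded above (it divides the value of the ``period function'' provided by stable periodicity) and simultaneously period bounded below by a huge quantity (near a point of $Y_1$ the generating vector field is infinitesimally trivial, so the period bounding lemma applies with a lower bound that blows up).

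Concretely, I would first assume $x\in Y_1\cap Per(\alpha)$ and unwind the definitions. Stable periodicity of $x$ gives an open neighbourhood $U\ni x$ and a smooth map $f\colon U\to\R^{*}$ with $\alpha_{f(z)}(z)=z$ for all $z\in U$; shrinking $U$ so that $\overline U$ is compact, set $b:=\sup_{\overline U}|f|<\infty$. Since $x\in Y_1=\partial W$ and $W=\text{Int}(Fix(\alpha))$, we have $x\in\overline W\subset Fix(\alpha)$ (so $\alpha$ fixes $x$) but $x\notin W$, hence also $x\notin\text{Int}(Fix(\alpha))$, so $U\setminus Fix(\alpha)\neq\emptyset$. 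For any $y\in U\setminus Fix(\alpha)$ the stabiliser $\{u\in\R:\alpha_u(y)=y\}$ is a nontrivial proper closed subgroup of $\R$, hence equals $\tau_y\Z$ with $\tau_y>0$ the prime period of $y$; since $0\neq f(y)$ lies in this subgroup, $\tau_y\leq|f(y)|\leq b$. Thus every non-fixed point of $U$ is honestly periodic with prime period at most $b$.

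Next I would exploit the location of $x$. Let $X$ be the vector field generating $\alpha$. The foliation $\cF$, hence $X$, vanishes identically on the open set $W$; as $x\in\overline W$, continuity gives $DX(x)=0$ (indeed $X$ is flat at $x$). Fix a chart around $x$; then $\sup_{B(x,r)}\|DX\|\to 0$ as $r\to 0$, where $B(x,r)$ is a ball of radius $r$ in that chart. Now I invoke the quantitative content of the period bounding lemma (Lemma~\ref{lem:periodbound}): its proof (cf.\ \cite{AbrahamRobbin}; via the fact that a closed curve in $\R^{n}$ has total curvature $\geq 2\pi$, together with the elementary estimate that the unit tangent of a periodic orbit $\gamma$ turns at rate $\leq 2\|DX\|$) shows $\tau_\gamma\geq c/\sup_\gamma\|DX\|$ for a constant $c>0$ depending only on the chart. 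Hence I may fix $r>0$ so small that every periodic orbit of $X$ whose image lies in $B(x,r)$ has prime period $>b$. Because $x$ is fixed, joint continuity of the flow on $[-b,b]\times M$ gives a neighbourhood $U'\subset U\cap B(x,r)$ of $x$ with $\alpha_t(z)\in B(x,r)$ for all $z\in U'$ and $|t|\leq b$. Choosing $y\in U'\setminus Fix(\alpha)$ (non-empty, since $x\notin\text{Int}(Fix(\alpha))$): $y$ is periodic with $\tau_y\leq b$ by the previous paragraph, while its orbit $\{\alpha_t(y):0\leq t\leq\tau_y\}$ lies in $B(x,r)$, forcing $\tau_y>b$ — a contradiction. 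This proves $Y_1\cap Per(\alpha)=\emptyset$ (and, in particular, the promised injectivity of $M\rtimes_\alpha\R\to H(\cF)$ over $Y_1$).

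The only delicate point — the ``hard part'' — is this last appeal: the bare statement of Lemma~\ref{lem:periodbound} only produces some $\eta>0$, whereas what the argument needs is that $\eta$ may be taken inversely proportional to a $C^{1}$-bound (a $C^{2}$-bound being more than enough) for $X$ in a neighbourhood of the orbit, so that the flatness of $X$ at $x$ makes periodic orbits approaching $x$ have unboundedly large period. Granting that refinement, the remaining ingredients are routine: the divisibility $\tau_y\mid f(y)$, the flatness of $X$ at $x$ coming from $x\in\overline W$ and $X|_{W}\equiv 0$, and the tube–lemma localisation keeping short orbit segments near the fixed point $x$.
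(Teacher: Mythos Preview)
Your proof is correct, but the route differs from the paper's in an instructive way. You apply a \emph{quantitative} form of the period bounding lemma (Yorke's inequality $\tau\ge c/\sup\|DX\|$) directly to $X$, using that $X$ is flat at $x\in\overline W$ so that the $C^1$-norm of $X$ near $x$ is arbitrarily small and hence the lower bound on periods blows up. You rightly flag that this goes beyond the bare statement of Lemma~\ref{lem:periodbound}. The paper instead avoids any quantitative refinement: since $X$ is flat at $x$, one can factor $X=qY$ with $q$ a smooth nonnegative function vanishing only at $x$ (e.g.\ the squared distance to $x$ near $x$) and $Y$ smooth with compact support. A point $y\in U$ that is $X$-periodic with period $f(y)$ is then $Y$-periodic with $Y$-period $\int_0^{f(y)} q(\phi^X_t(y))\,dt$; since the $X$-orbit of $y$ stays near the fixed point $x$ (the same tube-lemma step you use) and $q(x)=0$, this $Y$-period tends to $0$ as $y\to x$. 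Now the \emph{plain} period bounding lemma applied to $Y$ forces $Y(y)=0$, hence $X(y)=0$, for all $y$ near $x$, so $x\in W$.

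In short: your argument is more direct but needs a sharper input; the paper's factoring trick $X=qY$ trades that sharper input for an extra (easy) algebraic step, so that only the lemma as stated is required.
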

\begin{proof}
Let $x \in \overline{W}\cap Per(\alpha)$. We need to show that $x\not\in Y_1$, \ie that $x\in W$.

Up to changing $X$ far from $x$, we may assume that $X$ has compact support.

Since $x \in \overline{W}$, it follows that $X$ vanishes as well as all its derivatives at $x$. We may then write $X=qY$ where $q$ is a smooth nonnegative function such that $q(x)=0$ and $Y$ is a smooth vector field with compact support (take for instance $q$ to be a smooth function which coincides near $x$ to the square of the distance to $x$ for some riemannian metric). Let then $U$ be an open relatively compact neighborhood of $x$ and  $f:U\to \R^*$ a smooth bounded function such that $(y,f(y))\in P$ for all $y\in U$. It follows that all the points in $U$ are periodic for $X$ and therefore for $Y$. When $y\to x$, $f(y)\to f(x)$ whence the $Y$ period of $y$ tends to $0$. By the period bounding lemma, it follows that any $y$ close enough to $x$ satisfies $Y(y)=0$, whence $x\in W$.
\end{proof}

It follows that $(H(\Omega_0)\gpd \Omega_0,M\rtimes_{\alpha}\R\gpd M)$ is a nice decomposition for $\cF$.

It is worth noticing that the holonomy groupoid $\cG_0=H(\Omega_0,\cF_{|\Omega_0})$ is a disjoint union of clopen subgroupoids $W\coprod H(V',\cF_{|V'})$ where $V'$ is the interior of $\overline V$, and that its $C^*$-algebra $C^*(\Omega_0,\cF_{|\Omega_0})$ is a direct sum $C_0(W)\oplus C^*(V',\cF_{|V'})$. 

Remark that, in presence of periodic points, the groupoid $H(V,\cF_{|V})$ and therefore $\cG_0$ need not be Hausdorff.
\end{enumerate}
\end{exs}

\subsubsection{An example of larger singularity height}\label{sec:exlengthk}

We start by giving a natural family of examples of nicely decomposable foliations with singularity height larger than $1$. Some of them will be studied in \cite{AH}. 

If a subgroup $G\subset GL_n(\R)$ has more than two orbits in its action on $\R^n$, then the transformation groupoid $\R^n\rtimes G$ may give rise to interesting nicely decomposable  foliations of singularity height $\ge 2$.

A typical example is given by a parabolic subgroup of $GL(n,\bK)$, where $\bK=\R$ or $\C$: given a flag $\{0\}=E_k\subset E_{k-1}\subset E_{k-2}\subset \ldots \subset E_1\subset E_0=\bK^n$ (with $k\le n$ and $E_k$ pairwise different), let $G$ be the group of (positive if $\bK=\R$) automorphisms of this flag, \ie $G$ is the subgroup of $GL(n,\bK)$ of elements fixing the spaces $E_k$; if $\bK=\R$ we further impose that their restriction to $E_j$ has positive determinant (in order to fulfill connectedness). 


For $0\le j\le k$, let $\Omega_j=\bK^n\setminus E_{j+1}$ and $Y_j=E_j\setminus E_{j+1}$ (with the convention $E_{k+1}=\emptyset$). The set $Y_j$ consists of one or two $G$ orbits (depending on whether $\dim E_j\ge 2+\dim E_{j+1}$ or $\dim E_j=1+\dim E_{j+1}$ - in the complex case the $Y_j$ consists of a single orbit).

For every $j\in\{0,\ldots,k\}$, let $F_j$ be the quotient space $F_j=\bK^n/E_j$ endowed with the flag $\{0\}\subset E_{j-1}/E_j,\ldots \subset E_{0}/E_j$ and let $G_j$ be the group of positive automorphisms of this flag. The quotient map $\bK^n\to F_j$ induces a group homomorphism $q_j:G\to G_j$.

Let also $p_j:\Omega_j\to F_j$ be the restriction of the quotient map to $\Omega_j$. Let then $\widetilde{\cG_j}$ be the pull back groupoid of $F_j\rtimes G_j$ by the map $p_j$. In other words $$\widetilde{\cG_j}=\{(x,g,y)\in \Omega_j\times G_j\times \Omega_j;\ p_j(x)=gp_j(y)\}$$

The map $(x,g,y)\mapsto (x,q_j(g),y)$ is a submersion and a groupoid morphism from $\Omega_j\rtimes G=\{(x,g,y)\in  \Omega_j\times G\times \Omega_j;\ x=gy\}$ into $\widetilde{\cG_j}$. Its image is the $s$-connected component $\cG_j$ of $\widetilde{\cG_j}$.

It follows from  the following obvious Lemma that $\cG_j\rra \bK^n$ is a bisubmersion. 

\begin{lemma}\label{lem:bisub}
Let $M,U,V$ be manifolds, $(M,\cF)$ a foliation, $p:U\to V$ a surjective submersion and $t_V,s_V:V\rightrightarrows M$ two submersions. Then $(U,t_V\circ p,s_V\circ p)$ is a bi-submersion for $\cF$ if and only if $(V,t_V,s_V)$ is a bi-submersion for $\cF$. \hfill$\square$
\end{lemma}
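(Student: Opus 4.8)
The plan is to unwind the definition of a bi-submersion and reduce the statement to two or three elementary facts about pullback foliations along submersions, after which both implications become purely formal. Recall from \cite{AS1} that a triple $(X,\tau,\sigma)$, with $X$ a manifold and $\tau,\sigma\colon X\to M$ submersions, is a bi-submersion for $(M,\cF)$ precisely when $\tau^{-1}(\cF)=\sigma^{-1}(\cF)$ and this common $C^\infty_c(X)$-module equals $\Gamma_c(\ker d\tau)+\Gamma_c(\ker d\sigma)$. Since $p\colon U\to V$ is a submersion, $t_V\circ p$ and $s_V\circ p$ are submersions $U\to M$, so both sides of the asserted equivalence make sense.

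I would then invoke three facts, each either contained in \cite[\S1.2.3]{AS1} or verified by a single-chart computation in coordinates $U\cong V\times\R^q$ in which $p$ is the first projection. \emph{Functoriality of pullback:} $(t_V\circ p)^{-1}(\cF)=p^{-1}\big(t_V^{-1}(\cF)\big)$, and likewise with $s_V$. \emph{Pullback of a fibre distribution:} $p^{-1}\big(\Gamma_c(\ker dt_V)\big)=\Gamma_c(\ker d(t_V\circ p))$, and likewise with $s_V$; this uses the identity $(dp)^{-1}(\ker dt_V)=\ker d(t_V\circ p)$ together with the fact that $\Gamma_c(\ker dp)$ is contained in every pullback foliation on $U$. \emph{Injectivity of $p^{-1}$} (here one uses that $p$ is surjective): if $\cG_1,\cG_2$ are foliations on $V$ with $p^{-1}(\cG_1)=p^{-1}(\cG_2)$, then $\cG_1=\cG_2$ --- in the local model, local generators of $\cG_1$ lifted constantly in the $\R^q$-variable must be $C^\infty$-combinations of generators of $\cG_2$ and of the $\partial/\partial y_i$, comparing $\ker dp$-components kills the $\partial/\partial y_i$-terms, and evaluating at $y=0$ exhibits each generator of $\cG_1$ as a local section of $\cG_2$; symmetry and the sheaf property close the argument.

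Granting these, the rest is bookkeeping. If $(V,t_V,s_V)$ is a bi-submersion, put $\cF_V:=t_V^{-1}(\cF)=s_V^{-1}(\cF)=\Gamma_c(\ker dt_V)+\Gamma_c(\ker ds_V)$; applying $p^{-1}$ and using functoriality and the fibre-distribution identity to read off generators gives $(t_V\circ p)^{-1}(\cF)=p^{-1}(\cF_V)=(s_V\circ p)^{-1}(\cF)=\Gamma_c(\ker d(t_V\circ p))+\Gamma_c(\ker d(s_V\circ p))$, so $(U,t_V\circ p,s_V\circ p)$ is a bi-submersion. Conversely, if $(U,t_V\circ p,s_V\circ p)$ is a bi-submersion, functoriality gives $p^{-1}\big(t_V^{-1}(\cF)\big)=p^{-1}\big(s_V^{-1}(\cF)\big)$, whence $t_V^{-1}(\cF)=s_V^{-1}(\cF)=:\cF_V$ by injectivity; moreover $p^{-1}(\cF_V)=\Gamma_c(\ker d(t_V\circ p))+\Gamma_c(\ker d(s_V\circ p))=p^{-1}\big(\Gamma_c(\ker dt_V)+\Gamma_c(\ker ds_V)\big)$ by the fibre-distribution identity, so injectivity again yields $\cF_V=\Gamma_c(\ker dt_V)+\Gamma_c(\ker ds_V)$, i.e. $(V,t_V,s_V)$ is a bi-submersion.

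The only point demanding any care --- and the reason the lemma is advertised as obvious --- is keeping the $C^\infty_c(U)$-module structure straight in the fibre-distribution identity and in the injectivity of $p^{-1}$: each is a local, essentially linear-algebraic statement with no geometric content, provided one tracks which part of a given section lies in $\ker dp$.
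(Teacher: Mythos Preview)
Your proof is correct. The paper itself gives no proof at all --- the $\square$ immediately after the statement signals that the authors regard the lemma as obvious --- so there is nothing to compare against beyond the implicit claim that unwinding the definition suffices. Your three ingredients (functoriality of pullback, the identity $p^{-1}\big(\Gamma_c(\ker dt_V)\big)=\Gamma_c(\ker d(t_V\circ p))$, and injectivity of $p^{-1}$ for surjective $p$) are exactly what makes the lemma ``obvious'', and your bookkeeping is clean; in particular the observation that $\Gamma_c(\ker dt_V)=t_V^{-1}(0)$ reduces the second fact to functoriality applied to the zero foliation, which is perhaps worth saying explicitly. You have supplied precisely the details the paper elides.
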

It follows then from lemma \ref{lem:holgpd} that $H(\cF)|_{Y_j}=(\cG_j)|_{Y_j}$. We deduce:

\begin{prop}
The foliation of $\bK^n$ by the action of $G$ is nicely decomposed by the groupoids $\cG_j\rra \Omega_j$. Its holonomy groupoid is a union $\coprod_{j=0}^k(\cG_j)|_{Y_j}$.
\end{prop}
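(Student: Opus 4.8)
The plan is to verify the three conditions of Definition~\ref{dfn:goodgpd} for the data $W_j=\Omega_j=\bK^n\setminus E_{j+1}$, $Y_j=E_j\setminus E_{j+1}$ (so that $Y_0=\Omega_0$ and, for $j\ge1$, $\Omega_j\setminus\Omega_{j-1}=(\bK^n\setminus E_{j+1})\cap E_j=Y_j$), the Lie groupoids $\cG_j\rra\Omega_j$ constructed above, and suitable morphisms $q_j$; the description of $H(\cF)$ will then drop out of condition (b). Condition (a) is pure bookkeeping: each $E_{j+1}$ is a linear subspace, so $\Omega_j$ is open; every element of $G$ fixes each $E_i$ setwise, so $\Omega_j$ and $Y_j$ are $G$-invariant, and since $G$ is connected the leaves of $\cF$ are exactly the $G$-orbits, whence $\Omega_j$ and $Y_j$ are saturated; finally $\Omega_k=\bK^n\setminus E_{k+1}=\bK^n$.

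For condition (b) I would first record that $p_j\colon\Omega_j\to F_j$, being the restriction to the open set $\Omega_j$ of the linear quotient $\bK^n\to\bK^n/E_j$, is a submersion; hence $\widetilde{\cG_j}$, the pullback of the transformation Lie groupoid $F_j\rtimes G_j$ along $p_j$, is a (Hausdorff) Lie groupoid, and its $s$-connected component $\cG_j$ is an open Lie subgroupoid. That $\cG_j$ defines $\cF|_{\Omega_j}$ is where Lemma~\ref{lem:bisub} is used: the restriction $\Omega_j\rtimes G$ of the action groupoid of $G$ on $\bK^n$ to the saturated open set $\Omega_j$ is a bisubmersion for $\cF$, the map $(x,g,y)\mapsto(x,q_j(g),y)$ is a surjective submersion $\Omega_j\rtimes G\to\cG_j$ intertwining source and target, and therefore Lemma~\ref{lem:bisub} forces $\cG_j$ (with its own source and target) to be a bisubmersion for $\cF$; being a groupoid it carries an identity bisection, so it is an atlas for $\cF|_{\Omega_j}$, and being $s$-connected it is the path holonomy atlas. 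In particular there is a canonical quotient morphism $\rho_j\colon\cG_j\to H(\cF|_{\Omega_j})=H(\cF)|_{\Omega_j}$, which is the identity on objects and surjective on arrows.

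The core of the argument is the remaining half of condition (b), $\cG_j|_{Y_j}=H(\cF)|_{Y_j}$, for which I would prove $\rho_j$ is injective over $Y_j$ using Lemma~\ref{lem:holgpd}. Suppose $\gamma\in\cG_j$ lies over $x\in Y_j$ with $\rho_j(\gamma)$ a unit; as $\rho_j$ is the identity on objects, $s(\gamma)=t(\gamma)=x$, so $\gamma=(x,g,x)$ with $g\in G_j$. Since $Y_j\subset E_j$ we have $p_j(x)=0$ in $F_j$, and since $\Lie G$ preserves $E_j$ the leafwise tangent space $F_x=\{Ax:A\in\Lie G\}$ is contained in $E_j$; hence the linear surjection $\bK^n\to F_j=\bK^n/E_j$ factors through $N_x=\bK^n/F_x$. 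Choosing a bisection of $\cG_j$ through $\gamma$, the induced local diffeomorphism $\psi$ of $\Omega_j$ fixes $x$ and satisfies $p_j\circ\psi=\beta(\cdot)\,p_j$ for a smooth $G_j$-valued map $\beta$ with $\beta(x)=g$; differentiating at $x$ and using $p_j(x)=0$ yields $p_j\circ d\psi_x=g\circ p_j$, which says precisely that the surjection $N_x\to F_j$ intertwines $\nu(\gamma)$ with $g$. Lemma~\ref{lem:holgpd} gives $\nu(\gamma)=\id_{N_x}$, so $g$ acts as the identity on $F_j$; as $G_j$ is by definition a subgroup of $GL(F_j)$ this forces $g=e$, i.e. $\gamma$ is the unit at $x$. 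Condition (c) is then routine: the linear quotient $F_j=\bK^n/E_j\to\bK^n/E_{j-1}=F_{j-1}$ carries the flag of $F_j$ onto that of $F_{j-1}$ and is equivariant for the surjective homomorphism $G_j\to G_{j-1}$ through which $q_{j-1}$ factors (a surjective Lie group homomorphism, hence a submersion); the induced groupoid morphism $q_j\colon\cG_j|_{\Omega_{j-1}}\to\cG_{j-1}$, $(x,g,y)\mapsto(x,\text{image of }g,y)$, is then a submersion that is the identity on objects (it is well defined since $\Omega_{j-1}$ is saturated, so $\cG_j|_{\Omega_{j-1}}$ is again $s$-connected and lands in $\cG_{j-1}$).

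Once all three conditions hold, $(\bK^n,\cF)$ is nicely decomposed by the $\cG_j\rra\Omega_j$. For the last assertion, the $Y_j$ are saturated and partition $\bK^n$, so $H(\cF)$ is the disjoint union of its restrictions $H(\cF)|_{Y_j}$, and by the identification just proved each equals $\cG_j|_{Y_j}$; hence $H(\cF)=\coprod_{j=0}^{k}\cG_j|_{Y_j}$. The one genuinely non-formal step is the injectivity of $\rho_j$ over $Y_j$: everything there turns on the identity $p_j\circ d\psi_x=g\circ p_j$ combined with Lemma~\ref{lem:holgpd}, which together express that over the stratum $Y_j$ the atlas $\cG_j$ has already collapsed onto the holonomy groupoid; the rest is bookkeeping about linear subspaces, pullback groupoids, and $s$-connected components.
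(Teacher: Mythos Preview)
Your proof is correct and follows essentially the same approach as the paper: Lemma~\ref{lem:bisub} shows $\cG_j$ is a bisubmersion, and Lemma~\ref{lem:holgpd} is used to prove $\cG_j|_{Y_j}=H(\cF)|_{Y_j}$. The paper leaves the application of Lemma~\ref{lem:holgpd} implicit, whereas you have spelled out precisely why $\nu(\gamma)$ determines $g\in G_j$ (via the factorisation $N_x\twoheadrightarrow F_j$ and the identity $p_j\circ d\psi_x=g\circ p_j$), which is exactly the intended content; your verification of condition~(c) is likewise the natural one.
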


\begin{remarks}\label{rem:Bruhat}
 \begin{enumerate}
\item \label{item:proj}One may write a projective analogue of this example: let $PG$ be the projective analogue of $G$ acting on $\bK P^{n-1}$, namely $PG$ is the quotient of $G$ by its center: the group of similarities in $G$. It has $k$ orbits: the images $Y_j=PE_j\setminus PE_{j+1}$ of $E_j\setminus E_{j+1}$ by the quotient map $p:\bK^n\setminus\{0\}\to \bK P^{n-1}$ (for $j>0$). This foliation is nicely decomposed by the projective analogues $P\cG_j$ of the $\cG_j$. Note that the map $p:E_j\setminus \{0\}\to p(E_j)$ induces  a morphism $p_j:\cG_j\to P\cG_j$ which is a Morita equivalence in the complex case. In the real case, it is almost a Morita equivalence: the morphism $p_j$ induces an isomorphism  of the stabilizer of $x\in E_j\setminus \{0\}$ in $\cG_j$ with the stabilizer of $p(x)\in PE_j$ in $P\cG_j$, but for $0<i\le j$ and $\dim(E_i)=\dim (E_{k+1})+1$, the set $E_i\setminus E_{i-1}$ consists of two orbits of the groupoid $\cG_j$ which become equivalent in $P\cG_j$.  
The corresponding foliation $C^*$-algebra is (almost) Morita equivalent to $C^*(\Omega_1,\cF)$

\item \label{item:Bruhat} There are many other interesting examples of the same flavor. A typical one is given in the following way: let $P_1,P_2\subset GL(n,\bK)$ be two parabolic subgroups, and let $P_1\times P_2$ act on $GL(n,\bK)$ by left and right multiplication. If $P_1=P_2$ is the minimal parabolic subgroup - consisting of upper triangular matrices, the orbits of this action are labeled by the symmetric group $\mathfrak{S}_n$ (Bruhat decomposition). In this example,  the decomposition to be taken into account is more complicated than just the dimension of the fibers. One may need to use the partial ordering of the orbits given by the inclusion of the closures.
\end{enumerate}

\end{remarks}

\section{Foliations with singularity height one}\label{sec:convalgheightone}


Let $(M,\cF)$ be a foliation admitting a nice decomposition of height one. In this section our purpose is to show that the full foliation $C^*$-algebra $C^*(M,\cF)$ can be replaced by a mapping cone of Lie groupoid $C^*$-algebras associated with a nice decomposition of $\cF$. We will generalize this construction to higher length in the next section, but before this, we will make some comments on the difficulties with dealing with reduced $C^*$-algebras.

\subsection{A mapping cone construction}\label{sec:MappingConeGeneral}

In the length one case, as noted in remark \ref{rem:length1}, we just need to specify the saturated open subset $\Omega=\Omega_0$ and the Lie groupoid $\cG=\cG_1\gpd W_1=W$ which defines the foliation on an open set $W$ containing $Y=M\setminus \Omega$ and whose restriction to $Y$ coincides with that of $H(\cF)$.

The open subset $\Omega$ gives rise (at the level of the \emph{full} $C^*$-algebras) to a short exact sequence  $$
\xymatrix{
 0\ar[r] &C^{\ast}(\Omega,\cF_{|\Omega}) \ar[r]^{\iota_{\cF}} & C^{\ast}(M,\cF)  \ar[r]^{\pi_{\cF}} & C^{\ast}(M,\cF)|_{Y} \ar[r]&0
}
$$
which, in principle will allow us to compute its $K$-theory. This will actually be the case in our examples (sections \ref{sec:computelinearactions} and \ref{sec:X}).

In order to only use Lie groupoids (note that $Y$ needs not be a manifold), and also to be able to extend our construction to a more general setting (\cf section \ref{sec:Higherlength}), we will also make use of the somewhat more elaborate diagram which appears in figure \ref{fig:general} below.

Restricting $\cG$ to the open subset $W\cap \Omega$ and $H(\cF)$ to the open subset $\Omega$, the integration along fibers (\cf \cite{AS1}) of the quotient map $\cG \to H(\cF)$ induces the following diagram of half-exact sequences of \emph{full} $C^{\ast}$-algebras:
\begin{figure}[H]
\[
\xymatrix{
0\ar[r] & C^{\ast}(\cG_{W\cap \Omega}) \ar[r]^{\iota_{\cG}} \ar[d]_{\pi_{\Omega}} & C^{\ast}(\cG) \ar[r]^{p_{\cG}} \ar[d]_{\pi_M} & C^{\ast}(\cG_{Y})  \ar@{=}[d] \ar[r]&0 \\
0\ar[r] &C^{\ast}(\Omega,\cF_{|\Omega}) \ar[r]^{\iota_{\cF}} & C^{\ast}(M,\cF)  \ar[r]^{p_{\cF}} & C^{\ast}(M,\cF)_{|Y} \ar[r]&0
}
\]
\caption{Exact sequences for a nicely decomposable foliation of singularity height 1.\label{fig:general}}
\end{figure}

Let $\cF$ be a nicely decomposable foliation of singularity height one. We may use the diagram in figure \ref{fig:general} in order to compute the $K$-theory of $C^*(M,\cF)$ via a Mayer-Vietoris exact sequence. 

We explain here how one may replace $C^*(M,\cF)$ by a mapping cone of Lie groupoid $C^*$-algebras. 

We will use the following notation: 
\begin{itemize}
\item For any $C^{\ast}$-algebra $Z$ and a locally compact space $X$ put $Z(X) = C_0(X;Z)$.

\item Recall that the mapping cone of a morphism $u:A\to B$ of $C^*$-algebras is $$\cC_u=\{(a,\phi)\in A\times B([0,1));\ \phi(0)=u(a)\}.$$
\end{itemize}

With the notation of the diagram in figure \ref{fig:general}, consider the morphism of $C^{\ast}$-algebras $$(i_{\cG},\pi_{\Omega}) : C^{\ast}(\cG_{W\cap \Omega}) \to C^{\ast}(\cG) \oplus C^{\ast}(W\cap \Omega,\cF_{|{\Omega}})$$ 

\begin{prop}\label{prop:Eequiv}
With the notation of figure \ref{fig:general}, the (full) foliation $C^*$-algebra $C^{\ast}(M,\cF)$ is canonically $E^1$-equivalent to the mapping cone $\cC_{(\iota_{\cG},\pi_{\Omega})}$.
\end{prop}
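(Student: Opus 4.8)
The plan is to deduce the statement from a general lemma about pairs of horizontally exact sequences connected by vertical maps, applied to the diagram in figure \ref{fig:general}. First I would recall the standard fact that if
\[
\xymatrix{
0\ar[r] & I \ar[r] \ar[d] & A \ar[r] \ar[d] & B \ar@{=}[d] \ar[r] & 0 \\
0\ar[r] & I' \ar[r] & A' \ar[r] & B \ar[r] & 0
}
\]
is a commuting diagram of $C^*$-algebras with exact rows and with the right vertical map an isomorphism, then the mapping cone $\cC_\phi$ of $\phi\colon I\to A\oplus I'$, $\phi(x)=(\iota_{\cG}(x),\pi_\Omega(x))$, is $E$-equivalent (with a degree shift) to $A'$. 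The mechanism is a diagram chase with mapping cones: the cone of $A\to A\oplus I'$ (projection onto the second leg composed through the pullback) is computed in two ways. Concretely, $\cC_\phi$ fits in an extension whose quotient is $C_0((0,1))\otimes I'$ and whose ideal is $\cC_{\iota_{\cG}}$, the cone of $I\to A$; but $\cC_{\iota_{\cG}}$ is $E$-equivalent to $SB = C_0((0,1))\otimes B$ because $\iota_{\cG}$ is injective with quotient $B$ (the mapping cone of an injection with cokernel $B$ is $KK$- and hence $E$-equivalent to $SB$). Chasing through these identifications and using the bottom row $0\to I'\to A'\to B\to 0$ one recovers $A'$ up to $E^1$-equivalence.

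More precisely, the cleanest route is to use the \emph{double mapping cone} / mapping cone of a pair of sequences. Consider the morphism $\psi\colon A\to A'\oplus B([0,1))$ built from $\pi_M\colon A\to A'$ and $p_{\cG}\colon A\to B$ (landing at the endpoint $t=0$), and observe $\cC_\psi$ relates $A=C^*(\cG)$ to $A'=C^*(M,\cF)$. In parallel, the pair of ideals gives $\cC_{\pi_\Omega}$, cone of $\pi_\Omega\colon I\to I'$. The key point is that $\pi_M$ and $\pi_\Omega$ are both surjective (they are integrations along the fibers of the quotient maps $\cG\to H(\cF)$, $\cG_{W\cap\Omega}\to H(\cF)_\Omega$, which are surjective), so their mapping cones are $E$-equivalent to the suspensions of their kernels; moreover the kernels of $\pi_M$ and $\pi_\Omega$ agree, since the quotient map is an isomorphism over $Y$ and the two fiberings differ only by restriction to the saturated open set $\Omega$. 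Combining the two extensions of cones — one coming from the ideal/quotient decomposition $0\to \cC_{\pi_\Omega}\to \cC_{\pi_M}\to \cC_{\mathrm{id}_B}\to 0$, the second being $\cC_{\mathrm{id}_B}$ contractible — yields $\cC_{\pi_M}\sim_E \cC_{\pi_\Omega}$, and then the bottom extension identifies $\cC_\phi$ with (a suspension of) $A'$.

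The steps in order: (1) set up the three relevant mapping cones $\cC_{(\iota_{\cG},\pi_\Omega)}$, $\cC_{\pi_M}$, $\cC_{\pi_\Omega}$ and record the evident ideal/quotient short exact sequences relating them, using half-exactness of $C^*$-algebra sequences for groupoids (available at the \emph{full} level, which is why the full algebras are used throughout); (2) invoke the standard $E$-theory (equivalently $KK$-theory) computation that the mapping cone of a surjection is equivalent to the suspension of its kernel, and that of an injection to the suspension of its cokernel, plus the fact that a contractible cone (cone of an isomorphism) vanishes in $E$-theory; (3) assemble these via the six-term exact sequences / the additivity of $E$-theory on extensions to get the chain of equivalences $\cC_{(\iota_{\cG},\pi_\Omega)} \sim_E \cdots \sim_E S\, C^*(M,\cF)$, and track the single degree shift to get $E^1$-equivalence; (4) check naturality — that each equivalence is induced by an explicit $*$-homomorphism or asymptotic morphism built from the data of figure \ref{fig:general}, so the resulting equivalence is canonical. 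I expect the main obstacle to be bookkeeping rather than conceptual: making sure all the short exact sequences are genuinely exact at the full $C^*$-level (which the paper has arranged), and carefully composing the several $E$-theory equivalences so that the degree shifts add up to exactly one and the final equivalence is manifestly canonical (independent of choices of homotopies). A secondary subtlety is that $Y$ need not be a manifold, so one must be careful to phrase everything in terms of the $C^*$-algebras $C^*(\cG_Y)=C^*(M,\cF)_{|Y}$ and their locally-closed-restriction conventions, rather than groupoid-geometric arguments.
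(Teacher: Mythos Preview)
Your overall strategy (reduce to a general lemma about two exact rows with identical quotient, then compare cones) matches the paper's, but the execution has two real gaps.

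First, in your opening paragraph you have the extension backwards: for $\phi=(i,\pi):I\to B_1\oplus B_0$ one has
\[
0\longrightarrow B_0(0,1)\longrightarrow \cC_{(i,\pi)}\longrightarrow \cC_i\longrightarrow 0,
\]
not the other way round. More importantly, the sentence ``chasing through these identifications and using the bottom row one recovers $A'$'' is exactly the step that needs an argument: knowing $\cC_i\sim_E SQ$ and having an extension of $\cC_i$ by $SB_0$ does not by itself identify $\cC_{(i,\pi)}$ with $SA'$, because two extensions with the same ends need not be $E$-equivalent. The paper handles precisely this point by introducing the intermediate object $\cC_{(i',\id_{B_0})}$ and the commuting diagram
\[
\xymatrix{
0 \ar[r] & B_0(0,1) \ar[r]\ar@{=}[d] & \cC_{(i,\pi)} \ar[r]\ar[d] & \cC_i \ar[r]\ar[d] & 0\\
0 \ar[r] & B_0(0,1) \ar[r] & \cC_{(i',\id_{B_0})} \ar[r] & \cC_{i'} \ar[r] & 0
}
\]
together with the observation that both $\cC_i\to Q(0,1)$ and $\cC_{i'}\to Q(0,1)$ are surjective with contractible kernels, hence $\cC_i\to\cC_{i'}$ is an $E$-equivalence; a five-lemma argument then gives $\cC_{(i,\pi)}\sim_E\cC_{(i',\id_{B_0})}$, and a split exact sequence with contractible quotient $B_0[0,1)$ identifies the latter with $A(0,1)$. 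This intermediate cone is the missing idea in your sketch.

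Second, your alternative route in the next paragraph relies on $\pi_M$ and $\pi_\Omega$ being surjective. In the general setup of Definition~\ref{dfn:goodgpd} one only has $\cG=\cG_1\gpd W_1$ with $W_1$ an open set containing $Y_1$, not necessarily all of $M$; then $\pi_\Omega:C^*(\cG_{W\cap\Omega})\to C^*(\Omega,\cF_{|\Omega})$ lands in the ideal $C^*(W\cap\Omega,\cF)$ and is not surjective. The paper's purely diagrammatic proof uses no property of $\pi$ beyond commutativity of the square, which is why it applies in this generality. Even granting surjectivity, your argument establishes $\cC_{\pi_M}\sim_E\cC_{\pi_\Omega}$ but never links either of these to the cone $\cC_{(\iota_{\cG},\pi_\Omega)}$ that the proposition is about; that connection is again exactly what the intermediate object $\cC_{(i',\id_{B_0})}$ provides.
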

\begin{proof}
We show that given a diagram  of exact sequences of $C^{\ast}$-algebras and morphisms:
\[
\xymatrix{
0 \ar[r]  & I \ar[r]^{i}\ar[d]^{{\pi}} & B_1 \ar[r]\ar[d]  &Q\ar[r]\ar@{=}[d]& 0 \\
0 \ar[r] & B_0 \ar[r]^{i'}             & A \ar[r]  & Q\ar[r] &0 \\
 }
\]
the mapping cone $\cC_{(i,\pi)}$ of the map $(\pi,i) : I \to B_0 \oplus B_1$ is canonically $E^1$-equivalent to $A$.

Indeed we have canonical morphisms $\cC_i\to \cC_{i'}\to Q(0,1)$. Since both $\cC_i\to Q(0,1)$ and $\cC_{i'}\to Q(0,1)$ are onto with contractible kernels ($I[0,1)$ and $B_0[0,1)$ respectively), it follows that the morphism $\cC_i\to \cC_{i'}$ induces an equivalence in $E$ theory. Now, using the diagram:
\[
\xymatrix{
0 \ar[r]  & B_0(0,1) \ar[r] \ar@{=}[d]  & \cC_{(i,\pi)} \ar[r]\ar[d]  &\cC_i\ar[d]\ar[r]  & 0 \\
0 \ar[r] & B_0(0,1) \ar[r]    & \cC_{(i',\id_{B_0})}   \ar[r]      & \cC_{i'} \ar[r]    &0 \\
 }
\]
we find that the morphism $\cC_{(i,\pi)}\to \cC_{(i',\id_{B_0})}$ induces an  an equivalence in $E$ theory. Finally the (split) exact sequence 
\[\xymatrix{0 \ar[r] & A(0,1) \ar[r]    & \cC_{(i',\id_{B_0})}   \ar[r]      & B_0[0,1) \ar[r]    &0 }\]
yields the desired $E^1$-equivalence.
\end{proof}

\begin{remark}
We may note that we have just shown that the morphism $\cC_{(i,\pi)}\to \cC_{(\id_A,\id_A)}\simeq A(0,1)$ is invertible in $E$ theory.
\end{remark}

\subsection{\texorpdfstring{Difficulties at  the level of reduced $C^*$-algebra}{}}\label{sec:exactness}

Let us discuss the reduced version of this diagram:
\begin{itemize}
\item If the restriction $\cG_{|Y}$ is an amenable groupoid we also have horizontal exactness at the level of reduced $C^{\ast}$-algebras.
\item If $\cG_{|W\cap \Omega}$ is not amenable then the integration along fibers may not exist at the level of the kernels. We discuss an example as such in \ref{ex:notamen}.
\end{itemize}

In view of examples \ref{exs:basic} we focus now on foliations $(M,\cF)$ arising from an action of a Lie group $G$ on manifold $M$. We assume that $W=M$, the action groupoid $\cG = M \rtimes G$ realises a nice decomposition of singularity height 1 for $(M,\cF)$ and the complementary set $Y$ is a point.

If the group $G$ is amenable then integration along fibers of the quotient map $\cG \to H(\cF)$ gives the following diagram:
\begin{figure}[H]
\[
\xymatrix{
0 \ar[r] & C_0(\Omega)\rtimes G \ar[r]^{\iota_{\cG}} \ar[d]_{\pi_{\Omega}} & C_0(M)\rtimes G \ar[r]^{\pi_{\cG}} \ar[d]_{\pi} & C^{\ast}(G) \ar[r] \ar@{=}[d]  & 0 \\
0 \ar[r] & C^{\ast}(\Omega,\cF_{|\Omega}) \ar[r]^{\iota_{\cF}} & C^{\ast}(M,\cF)  \ar[r]^{\pi_{\cF}} & C^{\ast}(G) \ar[r] & 0
}
\]
\caption{Exact sequences for a nicely decomposable foliation of singularity height 1 arising from the action of an amenable Lie group.\label{fig:amenable}}
\end{figure}



If $G$ is not amenable, the sequences are exact at the level of full $C^{\ast}$-algebras. At the reduced $C^{\ast}$-algebra level\begin{itemize}
\item the sequences need not be exact;
\item the morphism $C_0(\Omega_1)\rtimes G \to C_r^{\ast}(M,\cF)$ obtained as a composition of $\pi$ with the morphism $C^{\ast}(M,\cF)\to C_r^{\ast}(M,\cF)$ doesn't need to pass to the quotient $C_0(\Omega_1)\rtimes_r G$ of $C_0(\Omega_1)\rtimes G$.
\end{itemize}
Note however that :

\begin{itemize}
\item In most cases that we consider the top sequence in figure \ref{fig:amenable} is exact since the groups we consider are exact.
\item We allways have some completely positive splittings (see prop. \ref{splittingred}).
\item In the example of the action of $GL(2,\R)$ on $\R^2$, since the stabilizers are amenable, the morphism $\pi_\Omega:C_0(\R^2\setminus\{0\})\rtimes GL(2,\R)\to \cK$ is defined at the reduced $C^*$-algebra level. As the group $GL(2,\R)$ is $K$-amenable, we find that in this case the full and reduced $C^*$-algebra of $\cF$ are $KK$-equivalent.
\end{itemize}

\begin{prop}\label{splittingred}
\begin{sloppypar}
Let $\cG$ be the action groupoid in figure \ref{fig:amenable}. Then the morphisms $C^{\ast}_r(\cG) \to C^{\ast}_r(G)$, $C^{\ast}(\cG) \to C^{\ast}(G)$ and $ C^{\ast}(M,\cF) \to C^{\ast}(G)$ have completely positive splittings.
\end{sloppypar}
\end{prop}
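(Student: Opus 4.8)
The statement to prove is Proposition \ref{splittingred}: the surjections $C^*_r(\cG)\to C^*_r(G)$, $C^*(\cG)\to C^*(G)$ and $C^*(M,\cF)\to C^*(G)$ (where $\cG=M\rtimes G$, $Y$ a point $y_0$, $\Omega=M\setminus\{y_0\}$) admit completely positive (contractive) splittings.

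The plan is to construct a single geometric u.c.p. map $C_0(G)\to C_0(M)$ at the level of function algebras and then show it induces splittings at all three $C^*$-algebra levels by naturality. First I would choose an ``evaluation at the fixed point'' data: since $y_0\in M$ is the point with $\{y_0\}=Y$ and $\cG_{Y}=G$, the inclusion of the fixed point gives a $G$-equivariant map. Concretely, pick a $G$-invariant state — equivalently, since $\{y_0\}$ is $G$-fixed, evaluation $\mathrm{ev}_{y_0}:C_0(M)\to \C$ is already $G$-equivariant, but this is a $*$-homomorphism to $\C$, not what we want. Instead, the natural candidate is the \emph{conditional-expectation-type} map: realize $C^*(G)=C^*(\cG_Y)=C^*(\cG)|_Y$ and use that $\cG|_Y=G\rra\{y_0\}$ sits inside $\cG$ as the restriction to a \emph{closed saturated} subset. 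The quotient map $C^*(\cG)\to C^*(\cG_Y)$ is ``restriction of functions on $\cG$ to the closed subgroupoid $G\times\{y_0\}$''. A splitting is then given by choosing, $G$-equivariantly and continuously, for each $g\in G$ a probability measure on $\{(x,g,x'):x=gx'\}\subset\cG$ supported near $(y_0,g,y_0)$; equivalently, by a u.c.p. map $\sigma:C_c(G)\to C_c(\cG)$, $\sigma(\varphi)(x,g,x')=\varphi(g)\,\chi(x)$ for a fixed positive $\chi\in C_c(M)$ with $\chi(y_0)=1$ and $\int \chi\,=1$ in the appropriate sense (more precisely $\chi$ a ``cut-off like'' function, normalized so that $\sigma$ is unital). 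The key computation is that $\sigma$ is completely positive: this follows because $\sigma$ is the composition of the unital $*$-homomorphism $C_c(G)\to C_c(G)\otimes C_c(M)$, $\varphi\mapsto \varphi\otimes 1$ (after unitization), with the slice/integration map $\mathrm{id}\otimes\omega_\chi$ where $\omega_\chi$ is the state ``integrate against $\chi$'' on (the unitization of) $C_0(M)$ — both c.p. — and then identify $C_c(G)\odot C_c(M)$ inside $C_c(\cG)$ via the convolution structure, checking this identification is a $*$-homomorphism on the relevant subalgebra. Positivity of slice maps on crossed products is standard (Exel, or Kasparov–Skandalis), so the heart is just checking it passes through the groupoid convolution identification.

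The three cases are then handled uniformly: the full case follows because u.c.p. maps extend to $C^*$-completions (full norms are the maximal ones); the reduced case follows because the reduced norm on $C^*_r(\cG)$ restricts to the reduced norm on $C^*_r(G)$ and $\sigma$ is bounded for the regular representations — explicitly, $\sigma$ intertwines, up to the u.c.p. estimate, the regular representation of $G$ on $\ell^2/L^2(G)$ with that of $\cG$ on $L^2(\cG_{y_0})$, and a u.c.p. map that is bounded in the reduced norm of the target extends to $C^*_r$. For the third map $C^*(M,\cF)\to C^*(G)$, I would use the vertical map $\pi_M:C^*(\cG)\to C^*(M,\cF)$ from figure \ref{fig:amenable} together with the equality $C^*(M,\cF)|_Y=C^*(G)$ (the right-hand square of that diagram commutes, the right vertical arrow being the identity): the composite $\pi_M\circ\sigma:C^*(G)\to C^*(M,\cF)$ is u.c.p. and splits $\pi_{\cF}$ because $\pi_{\cF}\circ\pi_M=\pi_{\cG}$ and $\pi_{\cG}\circ\sigma=\mathrm{id}$.

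The main obstacle I expect is the \emph{reduced} case: one must verify that $\sigma$ really is contractive for $\|\cdot\|_{C^*_r(\cG)}\to\|\cdot\|_{C^*_r(G)}$, i.e. that the ``cut-off'' slice map does not increase the reduced norm. Equivalently one needs a completely positive version of the fact that for the closed subgroupoid $G\hookrightarrow\cG$ the canonical quotient at the reduced level admits a c.p. section — this is where amenability of $G$ (assumed in the setting of figure \ref{fig:amenable}) can be invoked: for amenable $G$ full and reduced coincide for $C^*(G)$, so the reduced statement reduces to the full one composed with the isomorphism $C^*(G)=C^*_r(G)$; and $C^*_r(\cG)\to C^*_r(G)$ is then split by the full splitting followed by the (surjective, norm-decreasing) map $C^*(\cG)\to C^*_r(\cG)$ — wait, that goes the wrong way, so instead one observes directly that the regular representation of $\cG$ at $y_0$ restricts on the subalgebra $\sigma(C_c(G))$ to a representation weakly contained in the regular representation of $G$, giving the required estimate. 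I would write this last point carefully, as it is the only non-formal step; everything else is a diagram chase plus the standard c.p.-extension lemma.
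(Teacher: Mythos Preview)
Your overall plan --- use a cut-off function near the fixed point $y_0$ to build a section, then compose with $\pi_M$ for the foliation algebra --- is pointed in the right direction, and your treatment of the third map $C^*(M,\cF)\to C^*(G)$ via $\pi_M\circ\sigma$ matches the paper exactly. But the core construction has a genuine gap.

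The map you write down, $\sigma(\varphi)(x,g)=\varphi(g)\chi(x)$, is \emph{left multiplication} by $\chi\in C_0(M)$ inside the crossed product: as an element of $C_c(G,C_0(M))$ it is $g\mapsto \varphi(g)\chi$, i.e.\ $\chi\cdot\varphi$. Left multiplication by a positive element is not completely positive (nor even positive) in general. Your attempted justification --- factoring through $C_c(G)\otimes C_c(M)$ and then ``identifying'' this inside $C_c(\cG)$ --- fails precisely because that identification is \emph{not} a $*$-homomorphism: the convolution in $C_c(M\rtimes G)$ twists by the $G$-action on $M$, so $C_c(G)\odot C_c(M)\to C_c(\cG)$ does not respect products.

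The paper's argument is much shorter and avoids all of this. The key observation is that $C^*(G)$ sits naturally in the multiplier algebra $\cM(C_0(M)\rtimes G)$ (and likewise for the reduced versions). Choose $f\in C_0(M)$ with $\|f\|=1$ and $f(y_0)=1$, and set
\[
\sigma(\zeta)=f^*\,\zeta\, f \in C_0(M)\rtimes G \qquad (\zeta\in C^*(G)\subset \cM(C_0(M)\rtimes G)).
\]
This is a compression, hence completely positive and contractive on the nose; since $y_0$ is $G$-fixed and $f(y_0)=1$, restriction to $Y=\{y_0\}$ returns $\zeta$. The same formula works verbatim for the reduced crossed product, with no amenability hypothesis and no delicate norm estimate --- so the step you flagged as ``the only non-formal step'' simply disappears. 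Your formula is essentially the paper's $f^*\zeta f$ with the \emph{right} factor $f(g^{-1}x)$ dropped; restoring it fixes everything.
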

\begin{proof}
This is due to the fact $C^*(G)$ sits in the multiplier algebra of a crossed product $A\rtimes G$ - and the same for reduced ones:

We construct a completely positive splitting for the map $C^{\ast}(\cG) \to C^{\ast}(G)$: Take a function $f \in C_0(M)$ such that $||f||=1$ and $f(x_0)=1$.  

Given $\zeta \in C^{\ast}(G)$ put $\sigma(\zeta)=f^{\ast}\zeta f$. This is obviously a completely positive (and contractive) splitting of the top sequence. (The same is true for the reduced algebra and crossed products.)

Composing the completely positive splitting  $C^*(G)\to C^*(\cG)$  with the morphism $\pi:C^{\ast}(\cG) \to C^{\ast}(M,\cF)$ (given by integration along the fibers) we obtain a completely positive splitting of the second sequence.
\end{proof}

We now give an example where the morphism $\pi_{\Omega}$ is not defined at the reduced $C^*$-algebra level:
\begin{ex}\label{ex:notamen}
Consider the action of $G=SL(n,\R)$ on $\R^n$ for $n \geq 3$. This action has two orbits: $\{0\}$ and $ \Omega=\R^{n}\setminus \{0\}$. The stabilizer of a nonzero point for this action is isomorphic to $H=\R^{n-1} \rtimes SL(n-1,\R)$ which is not amenable if $n \geq 3$. The full crossed product $C_0(\R^n \setminus\{0\})\rtimes  SL(n,\R)$ is Morita equivalent to $C^{\ast}(H) $. Therefore, the full $C^*$-algebra of this foliation is the quotient of $C_0(\R^n)\rtimes SL(n,\R)$ sitting in a diagram \[
\xymatrix{
0 \ar[r] & C_0(\Omega)\rtimes G\simeq \cK\otimes C^*(H) \ar[r]  \ar[d]_{\id_\cK\otimes \varepsilon_{H}} & C_0(\R^n)\rtimes G \ar[r]^{\pi_{\cG}} \ar[d]_{\pi} & C^{\ast}(G) \ar[r] \ar@{=}[d]  & 0 \\
0 \ar[r] & \cK \ar[r]& C^{\ast}(\R^n,\cF)  \ar[r]^{\pi_{\cF}} & C^{\ast}(G) \ar[r] & 0
}
\] where $\varepsilon_H$ denotes the trivial representation of $H=\R^{n-1} \rtimes SL(n-1,\R)$.\\
The reduced crossed product $C_0(\R^n \setminus\{0\})\rtimes_{r} SL(n,\R)$ is Morita equivalent to $C_r^{\ast}(H)$. 

Note that the trivial representation $C^{\ast}(H) \to \C$ is not defined at the level of $C^{\ast}_r(H)$ when the group $H$ is not amenable.

The reduced $C^*$-algebra $C_r^*(\R^n,\cF)$ of this foliation is the quotient of $C_0(\R^n)\rtimes G$ corresponding to the sum of the two covariant representations on $L^2(\Omega)=L^2(\R^n)$ and $\{0\}\times G$.
\end{ex}
\begin{remark}\label{rem:exseq}
In the sequel we will use (almost) only full $C^{\ast}$-algebra to ensure that our sequences are exact and the trivial representation exists. This is legitimate from the point of view of the Baum-Connes conjecture, since the assembly map factorizes through the $K$-theory of the full $C^{\ast}$-algebra anyway. 
\end{remark}

\section{Larger singularity height and telescope}\label{sec:higherheight}
\label{sec:Higherlength}

In this section we extend the constructions of section \ref{sec:convalgheightone} to singular foliations of arbitrary singularity height. The mapping cone of section \ref{sec:convalgheightone} is replaced by a telescope. We start by recalling telescope constructions.

\subsection{Mapping telescopes}

Let us recall the following construction of  $C^{\ast}$-algebras:

\begin{definition}\label{dfn:telescope} Let $n\in \N\cup\{+\infty\}$.
Given $C^*$-algebras $(B_k)_{0\le k< n}$ and $(I_k)_{1\le k< n}$ and morphisms $\alpha_k:I_{k}\to B_{k-1}$ and $\beta_k:I_{k}\to B_{k}$, we define the associated \emph{telescopic $C^{\ast}$-algebra}  $$\cT((\alpha_k)_{1\le k<n},(\beta_k)_{1\le k<n})$$ to be the  $C^{\ast}$-algebra comprising of $$((\phi_k)_{0\le k< n},(x_k)_{1\le k<n}) \in \prod_{0\le k<n} B_k[k,k+1] \times \prod_{1\le k<n} I_k $$ such that \begin{itemize}
\item  for $1\leq k < n$ we have $\phi_{k}(k )=\beta_k(x_{k})$ and $\phi_{k-1}(k)=\alpha_{k-1}(x_{k})$.
\item $\phi_0(0)=0$,
\item $\begin{cases}
\phi_{n-1}(n)=0&$if $\quad n\ne +\infty \\
\lim_{k\to +\infty}\|\phi_k\|=\lim_{k\to +\infty}\|x_k\|=0 &$if $\quad n= +\infty
\end{cases}$
\end{itemize}
\end{definition}

\begin{remark}\label{rem:teldominateslimit}
A particular case of a telescope is when $I_k=B_{k-1}$ and $\alpha_k=\id_{I_k}$. We will denote just by $\cT(\beta)$ the associated mapping telescope $\cT(\id,\beta)$. In that case, if $n=\infty$, let us also denote by $B_\infty$ the inductive limit of the system $(B_k,\beta_k)$. We then have an exact sequence 
\begin{equation}0\to \cT(\beta)\to \cT'(\beta)\to B_\infty\to 0\label{eq1}\end{equation}
 where $\cT'(\beta)$ is the set of elements that have a limit at $\infty$: it is the inductive limit of $\cT'_k(\beta)$ (\ie the closure in $\cM(\cT(\beta))$ of the increasing union of $\cT'_k(\beta)$) where $\cT'_k(\beta)$ is the algebra of functions that become constant after $k$: \ie such that $\phi_\ell$ is constant for $\ell \ge k$ - and of course equal to the image in $B_\ell$ of the element $\phi_k(k)\in B_k$. Note that we have a diagram $$\xymatrix{0\ar[r]& \cT(\beta)\ar[r]\ar[d]& \cT'(\beta)\ar[r]\ar[d]& B_\infty\ar[r]\ar@{=}[d]& 0\\0\ar[r]& B_\infty(0,+\infty)\ar[r]& B_\infty(0,+\infty]\ar[r]& B_\infty\ar[r]& 0}$$
It follows that the composition of the element in $E^1(B_\infty,\cT(\beta))$ given by the exact sequence (\ref{eq1}) with the morphism $\cT(\beta)\to B_\infty(0,+\infty)$ is the unit element of $E^1(B_\infty,B_\infty(0,+\infty))=E(B_\infty,B_\infty)$.
\end{remark}

Using this remark, one obtains the following results (\cf \cite{Rosenberg-Schochet}):

\begin{prop}
\begin{enumerate}
\item If $I_k$ and $B_k$ are $E$-contractible, then $\cT(\alpha,\beta)$ is also $E$-contractible.
\item If $(B_k,\beta_k)$ is an inductive system of $E$-contractible $C^*$-algebras, then their inductive limit $B_\infty$ is $E$-contractible.
\item If $(B_k,\beta_k)$ is an inductive system of   $C^*$-algebras then $\cT'(\beta)$ is $E$-contractible and the element in $E^1(B_\infty,\cT(\beta))$ given by the exact sequence (\ref{eq1}) is invertible. 
\end{enumerate}
\begin{proof}
 \begin{enumerate}
\item Indeed, we have a unital ring morphism $\prod E(B_k,B_k)\to E(\bigoplus B_k,\bigoplus B_k)$ and it follows that if the $B_k$ are $E$-contractible, then $\bigoplus B_k$ is $E$-contractible. Since $\bigoplus B_k$ and $\bigoplus I_k$ are $E$-contractible, then by the exact sequence $$0\to \bigoplus B_k(0,1)\to \cT(\alpha,\beta)\to \bigoplus I_k\to 0,$$ the telescope $\cT(\alpha,\beta)$ is $E$-contractible.
\item Since the telescope $\cT(\beta)$ is $E$-contractible, the algebra $B_\infty$ is $E$-contractible since, by remark \ref{rem:teldominateslimit}, it is $E$-sub-equivalent to $\cT(\beta)$.

\item We have (split) exact sequences $0\to B_k(k,k+1]\to \cT'_{k+1}(\beta)\to \cT'_{k}(\beta)\to 0$ and it follows by induction that, for all $k$, $\cT'_{k}(\beta)$ is $K$-contractible - and therefore $E$-contractible (note that $\cT'_{0}(\beta)=0$). It follows that the inductive limit $\cT'(\beta)$ is $E$-contractible and therefore the exact sequence (\ref{eq1}) induces an $E^1$-equivalence. \qedhere
\end{enumerate}

\end{proof}
\end{prop}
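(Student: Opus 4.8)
The plan is to establish the three items in turn, using only formal properties of $E$-theory — homotopy invariance, half-exactness, split exactness, compatibility with suspension, compatibility with countable $c_0$-direct sums, and continuity under inductive limits — together with the short exact sequences that are essentially built into Definition \ref{dfn:telescope} and Remark \ref{rem:teldominateslimit}. For item (1), the first step is to record the defining extension of a general telescope: sending $((\phi_k),(x_k))$ to $(x_k)$ gives a surjection $\cT(\alpha,\beta)\to\bigoplus_{1\le k<n}I_k$ onto the $c_0$-direct sum — surjectivity holds because $\|x_k\|\to 0$, so the prescribed boundary values (the images of the $x_k$ under the $\alpha_k,\beta_k$) tend to $0$ and can be joined by norm-continuous paths of vanishing norm — and its kernel is exactly $\bigoplus_{0\le k<n}B_k(0,1)$, the tuples of $B_k$-valued functions on $[k,k+1]$ vanishing at both endpoints with norms tending to $0$. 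This yields
\[
0\longrightarrow\bigoplus_{0\le k<n}B_k(0,1)\longrightarrow\cT(\alpha,\beta)\longrightarrow\bigoplus_{1\le k<n}I_k\longrightarrow 0 .
\]
Now $B_k(0,1)$ is a suspension of an $E$-contractible algebra, hence $E$-contractible, and a countable $c_0$-direct sum of $E$-contractible algebras is $E$-contractible because the unital ring homomorphism $\prod_k E(A_k,A_k)\to E(\bigoplus_k A_k,\bigoplus_k A_k)$ sends $(\mathrm{id}_{A_k})_k=0$ to $\mathrm{id}_{\bigoplus_k A_k}$. So both outer terms are $E$-contractible and, by half-exactness of $E$-theory, so is $\cT(\alpha,\beta)$.

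For item (2), I would apply (1) to $\cT(\beta)=\cT(\mathrm{id},\beta)$: there $I_k=B_{k-1}$ is $E$-contractible by hypothesis, whence $\cT(\beta)$ is $E$-contractible. By the last assertion of Remark \ref{rem:teldominateslimit}, the class in $E^1(B_\infty,\cT(\beta))$ of the extension (\ref{eq1}), pushed forward along the morphism $\cT(\beta)\to B_\infty(0,+\infty)$, is the unit of $E(B_\infty,B_\infty)$; hence $\mathrm{id}_{B_\infty}$ factors in $E$-theory through the $E$-contractible algebra $\cT(\beta)$ and therefore vanishes, i.e. $B_\infty$ is $E$-contractible.

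For item (3), I would argue by induction on $k$ along the split extensions $0\to B_k(k,k+1]\to\cT'_{k+1}(\beta)\to\cT'_k(\beta)\to 0$. The ideal $B_k(k,k+1]$ is a cone over $B_k$, hence contractible, in particular $K$- and $E$-contractible; starting from $\cT'_0(\beta)=0$ and using split exactness (for $E$) or half-exactness (for $K$) at each stage, every $\cT'_k(\beta)$ is $E$-contractible, and since $E$-theory is continuous under the inductive limit $\cT'(\beta)=\varinjlim_k\cT'_k(\beta)$, the algebra $\cT'(\beta)$ is $E$-contractible. Feeding this into the long exact sequence of the extension (\ref{eq1}) $0\to\cT(\beta)\to\cT'(\beta)\to B_\infty\to 0$, the vanishing of the middle term forces the boundary element of $E^1(B_\infty,\cT(\beta))$ to be invertible, which is the remaining assertion.

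The formal skeleton above is short; the step that needs genuine care is the bookkeeping identifying the kernels, quotients, and splittings in the telescope extensions — the gluing conditions at the integer points, the $c_0$-condition at $\infty$, and the constant-extension splittings in (3) — as precisely suspensions, cones, or $c_0$-direct sums of the $B_k$ and $I_k$. Once these identifications are in place, everything reduces to homotopy invariance and (split) half-exactness of $E$. One should also resist shortcutting "a $c_0$-direct sum of $E$-contractible algebras is $E$-contractible" by half-exactness alone, since the index set is infinite; this genuinely requires the product-to-sum ring homomorphism in $E$-theory.
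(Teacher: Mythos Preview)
Your proof is correct and follows essentially the same route as the paper: the same telescope extension $0\to\bigoplus B_k(0,1)\to\cT(\alpha,\beta)\to\bigoplus I_k\to 0$ together with the ring map $\prod E(B_k,B_k)\to E(\bigoplus B_k,\bigoplus B_k)$ for (a), the $E$-sub-equivalence of $B_\infty$ with $\cT(\beta)$ from Remark~\ref{rem:teldominateslimit} for (b), and the split cone extensions $0\to B_k(k,k+1]\to\cT'_{k+1}(\beta)\to\cT'_k(\beta)\to 0$ plus passage to the inductive limit for (c). Your additional bookkeeping remarks (surjectivity of the evaluation, identification of kernels, the caveat about infinite direct sums) are welcome clarifications but do not change the argument.
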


\medskip In fact a telescope can be expressed as a mapping torus:

\begin{remarks} \label{rem:telescope=cone}
\begin{enumerate}
\item Recall that given $C^*$-algebras $A,B$ and morphisms $u,v:A\to B$ the torus $C^*$-algebra $\gT(u,v)$ is $$\{(a,\phi)\in A\times B[0,1];\ u(a)=\phi(0),\ v(a)=\phi(1)\}.$$

In fact the telescopic $C^*$-algebra $\cT(\alpha,\beta)$ identifies with the torus $C^*$-algebra $\gT(\widecheck\alpha,\widecheck\beta)$ of the morphisms $\widecheck\alpha,\widecheck\beta:\bigoplus_{k=1}^n I_k\to \bigoplus_{k=0}^{n} B_k$ defined by $$\widecheck \alpha((x_k)_k)=(0,\alpha_1(x_1),\ldots,\alpha_k(x_k),\ldots)$$ and $$\widecheck \beta((x_k)_k)=
\begin{cases}
(\beta_0(x_1),\ldots,\beta_{n-1}(x_n),0)& \hbox{if}\ n\in\N\\
(\beta_k(x_{k+1}))_{k\in \N}& \hbox{if}\ n=+\infty
\end{cases}
$$

\item In turn, a mapping torus is easily seen to be $K$-equivalent to a mapping cone:

Let $A,B$ be $C^*$-algebras and $j_{\pm}:A\to B$ $*$-homomorphisms. Let $j:A(\R_+^*)\to B(\R)$ be the $*$-homomorphism defined by $$j(\phi)(t)=\begin{cases}j_+(\phi(t))&\hbox{if}\ t>0\\0&\hbox{if}\ t=0\\j_-(\phi(-t))&\hbox{if}\ t<0\end{cases}$$
Then $\gT(j_+,j_-)(\R_+^*)$ is canonically isomorphic with $C_j$.

Indeed, $$\gT(j_+,j_-)(\R_+^*)=\{(\phi,\psi)\in A(\R_+^*)\times B(\R_+^*\times [0,1]);\ \psi(t,0)=j_+(\phi(t)), \ \psi(t,1)=j_-(\phi(t))\}$$ and 
\begin{eqnarray*}
 C_j&=&\left\{(\phi,\psi)\in A(\R_+^*)\times B(\R\times \R_+);\  \psi(t,0)=\begin{cases}j_+(\phi(t))&\hbox{if}\ t>0\\0&\hbox{if}\ t=0\\j_-(\phi(-t))&\hbox{if}\ t<0\end{cases}\quad \right\}\\
&=&\left\{(\phi,\psi)\in A(\R_+^*)\times B(\R\times \R_+\setminus \{(0,0)\});\  \psi(t,0)=\begin{cases}j_+(\phi(t))&\hbox{if}\ t>0\\j_-(\phi(-t))&\hbox{if}\ t<0\end{cases}\quad \right\}
\end{eqnarray*}
are  isomorphic through the homeomorphism $(r,\theta)\mapsto (r\cos \pi\theta,r\sin\pi\theta)$ from $\R_+^*\times [0,1]$ onto $(\R\times \R_+)\setminus \{(0,0)\}$.
\end{enumerate}
\end{remarks}

\subsection{The telescope of nicely decomposable foliations}\label{subsec:telescope}

Let $(M,\cF)$ be a \emph{nicely decomposable} foliation of height $n\in \N\cup\{\infty\}$ in the sense of definition \ref{dfn:goodgpd}. Generalizing the case $n=1$, we construct a $C^{\ast}$-algebra which is $E$-equivalent with the (full)  foliation $C^{\ast}$-algebra. 

We are thus given open subsets $(W_k)_{k<n+1}$ groupoids $\cG_k\gpd W_k$ and morphisms $\cG_k|_{W_k\cap W_{k-1}}\to \cG_{k-1}$ satisfying the conditions of def. \ref{dfn:goodgpd}.

Let $\Omega_k=\bigcup_{j\le k} W_j$ be the sequence of strata of this decomposition and $Y_k=\Omega_k\setminus \Omega_{k-1}$.

Since $\cF$ is assumed to be nicely decomposable, we are given Lie groupoids  $\cG_k \gpd W_k$ and morphisms of Lie groupoids $q_k:\cG_{k}|_{\Omega_{k-1}} \to \cG_{k-1}$ such that $\cG_k |_{Y_k} = H(Y_k,\cF)$.

For every $0\leq k < n+1$ consider the full $C^{\ast}$-algebras $A_k = C^{\ast}(\Omega_k,\cF)$ and  $B_k = C^{\ast}(\cG_k)$ and the morphism, obtained by integration along the fibers $p_k:B_k\to A_k$. Put also $Q_k = C^{\ast}(\cG_k |_{Y_k})$.  We have a diagram:

\begin{figure}[H]
\[\xymatrix @!0 @R=3pc @C=3pc {
   0 \ar[rr] &&      I_{k+1} \ar[dd]_{\pi_{k}}\ar[rrr]^{j_{k+1}} \ar[rd]^{q_{k+1}}            &&& B_{k+1} \ar[dd]^{p_{k+1}} \ar[rrr]                 &&& Q_{k+1} \ar@{=}[dd] \ar[rr]     && 0 \\
   &&&B_k\ar[ld]^{p_k}
   \\
0 \ar[rr]             &&  A_{k} \ar[rrr]^{\iota_{k}} \ar[rrd]_{\tilde \iota_k}       &&&     A_{k+1} \ar[rrr]  \ar[ld]^{\tilde \iota_{k+1}}     &&& A_{k+1} / A_{k} \ar[rr]     &&  0 \\
&&&&C^*(M,\cF) }\]
\caption{Short exact sequences of strata.\label{fig:telescope}}
\end{figure}
Here the map $q_k$ is integration along the fibers of the groupoid morphism $q_k:\cG_{k}|_{\Omega_{k-1}} \to \cG_{k-1}$ and $\pi_k=p_k\circ q_{k+1}:I_k\to A_k$. The quotient algebras $B_{k}/I_{k-1}$ and $A_k/A_{k-1}$ coincide (with $Q_k$).

Denote also  by $\tilde \iota_k:A_k\to C^*(M,\cF)=A_n$ the inclusion. 

As, for every $k< n$ we have $\tilde \iota  _{k}\circ \pi_{k}=\tilde \iota_k\circ p_k\circ q_k=\tilde \iota_{k+1}\circ p_{k+1}\circ j_{k+1}$, we obtain a morphism $\Psi:\cT(q,j)\to A_n(0,n+1)$ defined by $\Psi((\phi_k)_{0\le k<n+1},(x_k)_{1\le k<n+1})(t)=\tilde \iota_k\circ p_k(\phi_k(t))$ for $t\in [k,k+1]$.

\begin{thm}\label{thm:telescope} With the above notation, the class in $E(\cT(q,j),A_n(0,n+1))$ of the morphism $\Psi$ is invertible.
\end{thm}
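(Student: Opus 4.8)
The plan is to factor the morphism $\Psi$ as a composition of two morphisms, each of which is visibly invertible in $E$-theory, and then to deduce the result by combining Proposition \ref{prop:Eequiv} with the telescope machinery of this section in a suitably parametrized form. In other words, Theorem \ref{thm:telescope} is to be obtained as the ``telescopic upgrade'' of Proposition \ref{prop:Eequiv}.

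First I would introduce the \emph{inclusion telescope}: the mapping telescope $\cT(\iota)$, in the sense of Remark \ref{rem:teldominateslimit}, of the inductive system $\big(A_k,\iota_k\big)$ of the inclusions $\iota_k:A_k\hookrightarrow A_{k+1}$, whose inductive limit is $A_n$. The integration-along-the-fibers maps $p_k:B_k\to A_k$ together with the maps $\pi_k=p_k\circ q_{k+1}:I_{k+1}\to A_k$ assemble into a morphism $P:\cT(q,j)\to\cT(\iota)$; here one uses that $B_0=C^*(\cG_0)=C^*(\Omega_0,\cF_{|\Omega_0})=A_0$ with $p_0=\mathrm{id}$, and that every square in Figure \ref{fig:telescope} commutes (which comes down to the fact that each atlas $\cG_k$ has $H(\cF)$ as canonical quotient, together with the compatibility of the $q_k$ with these quotient maps built into Definition \ref{dfn:goodgpd}). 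By construction $\Psi$ equals $P$ followed by the canonical morphism $\mathrm{can}:\cT(\iota)\to A_n(0,n+1)$, $((\phi_k),(x_k))\mapsto\big(t\mapsto\tilde\iota_k(\phi_k(t))\ \text{ for }t\in[k,k+1]\big)$. So it suffices to prove that $\mathrm{can}$ and $P$ are both invertible in $E$. The invertibility of $\mathrm{can}$ is immediate: by Remark \ref{rem:teldominateslimit} the exact sequence (\ref{eq1}) for the system $(A_k,\iota_k)$, together with the $E$-contractibility of $\cT'(\iota)$ (part (3) of the Proposition following that remark), exhibits $\mathrm{can}$ as an $E$-equivalence, up to the harmless homeomorphism $(0,1)\simeq(0,n+1)$; the finite-height case is identical.

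The substantial point is the invertibility of $P$, and the plan is to run the mapping-cone chase of the proof of Proposition \ref{prop:Eequiv} one stratum at a time. For finite $n$ I would induct on the height. The base case $n=1$ is exactly Proposition \ref{prop:Eequiv}: using Remark \ref{rem:telescope=cone} the telescope $\cT(q,j)$ is in this case, up to $K$-equivalence and suspension, the mapping cone $\cC_{(\iota_\cG,\pi_\Omega)}$ appearing there (note $B_0=A_0$ and $q_1=\pi_\Omega$), while $\cT(\iota)$ is $E$-equivalent to $A_1(0,1)$ by the previous paragraph. For the inductive step one peels off the top stratum using the two short exact sequences $0\to I_{k+1}\xrightarrow{j_{k+1}}B_{k+1}\to Q_{k+1}\to 0$ and $0\to A_k\xrightarrow{\iota_k}A_{k+1}\to Q_{k+1}\to 0$ connected by $p_{k+1}$, which restricts to $\pi_k$ on $I_{k+1}$ and induces the identity on $Q_{k+1}$, and runs the three moves of Proposition \ref{prop:Eequiv}: the morphism $\cC_{j_{k+1}}\to\cC_{\iota_k}$ is an $E$-equivalence because both surject onto $Q_{k+1}(0,1)$ with $E$-contractible kernel ($I_{k+1}[0,1)$, resp.\ $A_k[0,1)$); then a mapping-cone diagram with $A_k(0,1)$ on the left; then a split extension. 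The point that makes the induction go through is that this manipulation is localized to the interval $[k,k+1]$ and so is compatible with the rest of the telescope. For $n=\infty$, the morphisms $P_m$ obtained on the truncated telescopes are mutually compatible, and passing to the colimit — controlling the infinite telescope once more via the $\cT'$-trick of Remark \ref{rem:teldominateslimit} — yields invertibility of $P$.

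The hard part will be precisely this last point: organizing the stratum-by-stratum cone manipulations so that the auxiliary intervals they introduce are threaded consistently through the whole telescope, and making the passage to the infinite-height case rigorous (one needs the $E$-inverses, not merely the $E$-equivalences, to be compatible along the truncations, which is why reformulating everything through $\cT'$ and the split extensions of Remark \ref{rem:teldominateslimit} is useful). A secondary, more routine task is to check that all the squares of Figure \ref{fig:telescope} commute at the level of \emph{full} $C^*$-algebras — this is where the hypotheses of Definition \ref{dfn:goodgpd} enter (the $q_k$ are submersions which are the identity on objects, and each $\cG_k$ defines $\cF$, hence factors canonically through $H(\cF)$) — and that all the integration-along-the-fibers maps are genuine $\ast$-homomorphisms, which is guaranteed by working with full $C^*$-algebras throughout.
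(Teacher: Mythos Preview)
Your plan is correct and is essentially the paper's own argument: your inclusion telescope $\cT(\iota)$ is precisely the algebra $\cJ$ the paper introduces, your factorization $\Psi=\mathrm{can}\circ P$ is the paper's two-step reduction, and your inductive step (showing $\cC_{j_{k+1}}\to\cC_{\iota_k}$ is an $E$-equivalence via the common quotient $Q_{k+1}(0,1)$) is exactly the paper's diagram $0\to\cT_{\ell-1}\to\cT_\ell\to\cC_{j_\ell}\to 0$ over $0\to\cJ_{\ell-1}\to\cJ_\ell\to\cC_{\iota_{\ell-1}}\to 0$. The only cosmetic differences are that the paper starts the induction at $\ell=0$ (where $\Psi_0$ is literally an isomorphism, since $B_0=A_0$) rather than at $n=1$, and handles $n=\infty$ by observing that all the mapping cones $\cC_{\Psi_\ell}$ are $E$-contractible and passing to their inductive limit, which is a cleaner bookkeeping than threading compatible inverses.
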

\begin{proof} Let $\cB=\{f\in A_n(0,n+1];\ \forall t\in \R_+^*, \forall k\in \N;\  t-1\le k\le n\Rightarrow f(t)\in A_k\}$ and put $\cJ=\{f\in B;\ f(n+1)=0\}$ 

The inclusion $\cJ\to A_n(0,n+1)$ is an $E$-equivalence (\cf \cite{Rosenberg-Schochet} - if $n<+\infty$, it is a $KK$-equivalence). Its inverse is given by the exact sequence $0\to \cJ\to \cB\to A_n\to 0$.

For $\ell\in \N$, $\ell\le n$, put $\cJ_\ell=\{f\in \cJ;\ f(t)=0 \hbox{ if } t\ge \ell+1\}$ and let $\cT_\ell$ be the ideal $$\cT_\ell=\{((\phi_k)_{0\le k<n+1},(x_k)_{1\le k<n+1})\in \cT; \ \forall k>n,\ \phi_k=0 \ \hbox{ and }\ x_k=0\}.$$

Let us show by induction that the morphism $\Psi_\ell:\cT_\ell\to \cJ_\ell$ induced by $\Psi$ is an $E$-equivalence: \begin{itemize}
\item $\Psi_0$ is an isomorphism (and the case $\ell=1$ follows from the proof of prop. \ref{prop:Eequiv}).
\item We have an exact sequence 
$$\xymatrix{0\ar[r]&\cT_{\ell-1}\ar[r]\ar[d]^{\Psi_{\ell-1}}&\cT_{\ell}\ar[r]\ar[d]^{\Psi_{\ell}}&\cC_{j_\ell}\ar[r]\ar[d]^{\tilde p_\ell}&0\\
0\ar[r]&\cJ_{\ell-1}\ar[r]&\cJ_{\ell}\ar[r]&\cC_{\iota_{\ell-1}}\ar[r]&0}
$$
where $\tilde p_\ell:\cC_{j_\ell}\to \cC_{\iota_{\ell-1}}$ is the morphism induced by $p_\ell :B_\ell\to A_\ell$ at the cone level. 

Examining figure \ref{fig:telescope}, as $j_\ell$ and $\iota_{\ell-1}$ are inclusions of ideals and $p_\ell$ induces an isomorphism $B_\ell/I_\ell\to A_{\ell}/A_{\ell-1}$, we deduce that $\tilde p_\ell$ is $E$-invertible. We thus obtain the induction step.
\end{itemize}

If $n$ is finite, the proof is complete. 

If $n=+\infty$, the mapping cone $\cC_{\Psi_\ell}$ is $E$-contractible for all $\ell$ and it follows that their inductive limit $\cC_{\Psi}$ is $E$-contractible.
\end{proof}


\section{\texorpdfstring{Longitudinally smooth groupoids and equivariant $KK$-theory}{Longitudinally smooth groupoids and equivariant KK-theory}}\label{sec:longsmoothKK}

We proved in theorem \ref{thm:telescope} that the telescopic algebra $\cT(q,j)$ has the same $K$-theory as $C^*(M,\cF)$. In the next section, we will build a Baum-Connes map for the telescopic algebra $\cT(q,j)$, which will give us a Baum-Connes map for $C^*(M,\cF)$.

The telescopic algebra $\cT(q,j)$ associated with a nicely decomposable foliation, as well as the foliation algebra itself (thanks to \cite{Debord2013}), is the $C^*$-algebra of a \emph{longitudinally smooth groupoid} in a sense that we briefly describe here. 

Note that all the constructions we will give below generalize easily to groupoids that are covered by $C^{\infty,0}$ manifolds or even locally compact spaces with Haar measures.

\subsection{Longitudinally smooth groupoids}

A \emph{longitudinally smooth groupoid} is a groupoid $G\overset{t,s}\gpd G^{(0)}$ such that:
\begin{itemize}
\item its set of objects is endowed with a structure of smooth manifold (possibly with boundary or corners); 
\item for every $x \in G^{(0)}$, the set $G^x=t^{-1}(\{x\})$  caries also a smooth structure (without boundary) and the source map $s:G^x\to G^{(0)}$ is smooth with (locally) constant rank;
\item the ``smooth structure'' of $G$ itself is given by an \emph{atlas} which is a family of smooth manifolds $(U_i)_{i\in I}$ (possibly with boundary or corners) and maps $q_i:U_i\to G$.
\end{itemize}

We assume that these smooth structures satisfy:
\begin{description}
\item[Compatibility.] For every $i\in I$, the maps $t\circ q_i$ and $s\circ q_i$ are smooth submersions; for every $i\in I$ and every $x\in G^{(0)}$ the map $q_i$ induces a smooth submersion $q_i^{-1}(G^x)\to G^x$.
\item [Minimal elements.] For every $\gamma\in G$, there exists $i\in I$ and $z\in q_i$ such that $q_i(z)=\gamma$ and the map $q_i^{-1}(G^{t(\gamma)})\to G^{t(\gamma)}$ is a local diffeomorphism near $z$. If $j\in I$ and $z'\in U_j$ are such that $q_j(z')=\gamma$, then there is an open neigborhood $V'\subset U_j$ of $z'$ and a submersion $\varphi:V'\to U_i$ such that $q_i\circ \varphi=(q_j)_{|V'}$.
\item [Inverse is smooth.] For every $i\in I$, there exists $j\in I$ and a diffeomorphism $\kappa:U_i\to U_j$ such that $q_j\circ \kappa(z)=q_i(z)^{-1}$ for every $z\in U_i$.
\item[Composition is smooth.] For every $i,j\in I$, let $U_i\circ U_j$ be the fibered product $U_i\times_{s\circ q_i,t\circ q_j}U_j$. For every $(z_i,z_j)\in U_i\circ U_j$, there is a $k\in I$, a neighborhood $W$ of $(z_i,z_j)$ in $U_i\circ U_j$ and a submersion $\varphi:W\to U_k$ such that for all $(w_i,w_j)\in W$ we have $q_i(w_i)q_j(w_j)=q_k\circ \varphi (w_i,w_j)$.
\end{description}

Exactly as in \cite{AS1}, we may associate to a longitudinally smooth groupoid a $C^*$-algebra $C^*(G)$ (as well as a reduced one, since the $s$-fibers are assumed to be manifolds).

\begin{exs}\begin{enumerate}
\item A Lie groupoid is of course a  longitudinally smooth groupoid. The atlas is the groupoid itself!

\item The holonomy groupoid of a singular foliation is such a longitudinally smooth groupoid, the atlas is given by bisubmersions (see \cite{AS1}).

\item The telescopic algebra $\cT(q,j)$ constructed in the previous section is associated with the groupoid $G=\bigcup_{k=0}^n\cG_k\times (k,k+1)\cup \bigcup_{k=1}^n(\cG_k)_{\Omega_{k-1}\cap W_k}\times \{k\}$. Its set of objects is the open subset $\Big(\bigcup _{k=0}^nW_k\times (k,k+1)\Big)\cup \Big(\bigcup _{k=1}^n(\Omega_{k-1}\cap W_k)\times (k-1,k+1)\Big)$ of $M\times \R_+^*$.

It is endowed with the atlas formed by the Lie groupoids $(\cG_k\times (k,k+1))_{k\in \N,\ k\le n}$ and $((\cG_k)_{\Omega_{k-1}\cap W_k}\times (k-1,k+1))_{k\in \N,\ 1\le k\le n}$.
\end{enumerate}
\end{exs}

\subsection{\texorpdfstring{Action of a longitudinally smooth groupoid on a $C^*$-algebra}{Action of a longitudinally smooth groupoid on a C*-algebra}}

We now fix a longitudinally smooth groupoid $G\overset{t,s}\gpd M$ with an atlas $(U_i,q_i)_{i\in I}$. We put $s_i=s\circ q_i$ and $t_i=t\circ q_i$.

\subsubsection{\texorpdfstring{Action of a locally compact groupoid on a $C^*$-algebra \cite{Kasparov1988, LeGall}}{Action of a locally compact groupoid on a C*-algebra}}

For the convenience of the reader we recall some definitions on $C(X)$-algebras and actions of locally compact groupoids from \cite{Kasparov1988, LeGall}:

Let $M$ be a locally compact space.
\begin{enumerate}
\item A $C_0(M)$-algebra is a pair $(A,\theta)$ where $A$ is a $C^{\ast}$-algebra and $\theta : C_0(M) \to \cZ\cM(A)$ a $\ast$-homomorphism such that $\theta(C_0(M))A=A$. (Here $\cZ\cM(A)$ is the center of the multiplier algebra of $A$.)
\item Put $A_b = \{a \in \cM(A) : \phi a \in A \text{ for all } \phi \in C_0(X)\}$ and $A_c = A C_c(X)$.
\item Let $A, B$ be $C_0(M)$-algebras. A homomorphism of $C_0(M)$-algebras $\phi : A \to B$ is a  $C_0(M)$-linear homomorphism of $C^*$-algebras.
\item \label{ex:CMalgebra} Let $N$ be a locally compact space and $p : N \to M$ a continuous map. Then $C_0(N)$ is a $C_0(M)$-algebra thanks to the map $\theta = p^{\ast} : C_0(M) \to C_b(N)=\cM(C_0(M))$.

\bigskip Let $A$ be a $C_0(M)$ algebra.
\item  \label{a} For every $x \in M$ there is a fiber $A_x = A/C_x A$ where $C_x = \{h \in C_0(M) : h(x)=0\}$. The natural map $A \to \prod_{x \in M} A_x$ induced by the quotient maps $\pi_x : A \to A_x$ is injective. For instance, in example \ref{ex:CMalgebra}, given $x \in M$ the fiber $C_0(N)_x$ is $C_0(N_x)$ where $N_x = p^{-1}(x)$.
\item \label{b} A homomorphism of $C_0(M)$-algebras $\phi : A \to B$ induces a homomorphism of $C^{\ast}$-algebras $(\phi_x)_{x \in M} : \prod_{x \in M} A_x \to \prod_{x \in M} B_x$. The homomorphism $\phi$ is injective (surjective) if and only of $\phi_x$ is injective (surjective) for every $x \in M$.
\item \label{c} There are natural operations of restriction to open and closed subsets of $M$: If $U$ is an open subset of $M$ and $F=X\setminus U$; the algebra $C_0(U)$ identifies with the ideal $C_0(U)=\{f \in C_0(M) : f(y)=0 \text{ for all } y \in F\}$ of $C_0(M)$. Then $A_U$ denotes the $C_0(U)$-algebra $C_0(U)A$ and $A_F$ the $C_0(F)$-algebra $A/A_U$. If $Y\subset X$ is locally closed, then $Y$ is open in $\overline Y$ and $A_Y$ denotes the $C_0(Y)$-algebra $(A_{\overline Y})_Y$.
\item \label{d} Let $A$ be a $C_0(M)$ and $B$ a $C_0(N)$-algebra. Then $A \otimes_{max} B$ is a $C_0(M \times N)$-algebra. When $M = N$, the restriction of $A \otimes_{max} B$ to the diagonal $\{(x,x) : x \in M\}$ (which is a closed subset of $M \times M$) is a $C_0(M)$-algebra denoted $A \otimes_{C_0(M)} B$.
\item \label{e} Again let $A$ be a $C_0(M)$-algebra and consider a smooth map $p : N \to M$. We denote $p^{\ast}A$ the $C_0(N)$-algebra obtained by restricting $A \otimes C_0(N)$ to the graph $\{(p(y),y) : y \in N\}$ which is a closed subset of $M \times N$ (here $C_0(N)$ is regarded as a $C_0(N)$-algebra). It is easy to see that this construction has the following properties:
\begin{itemize}
\item $(p^{\ast}A)_y = A_{p(y)}$ for every $y \in N$;
\item If $A, B$ are $C_0(M)$-algebras then $p^{\ast}A \otimes_{C_0(Y)} p^{\ast}B = p^{\ast}(A \otimes_{C_0(M)} B)$.
\item If $q : Z \to N$ is a smooth map then $q^{\ast}(p^{\ast}A)=(p\circ q)^{\ast} A$.
\end{itemize}

\item \label{f} With the previous notation, for every $a \in A$ we put $p^{\ast}a = a \otimes 1 \in (p^{\ast}A)_b$. If $\phi : A \to B$ is a homomorphism of $C_0(M)$-algebras we put $p^{\ast}\phi = \phi \otimes id_{C_0(N)} : p^{\ast}A \to p^{\ast}B$. 
\item \label{g} An action of a Lie groupoid $\cG \gpd M$ on a $C_0(M)$-algebra $A$ is defined in \cite{LeGall} by an isomorphism $\alpha$ of $C_0(\cG)$ algebras $s^*A\to t^*A$. This isomorphism is given by a family  of isomorphisms $\alpha_{\gamma} : A_{s(\gamma)} \to A_{t(\gamma)}$ for $\gamma \in \cG$. The isomorphism $\alpha$ is required to be a representation of $\cG$, \ie to satisfy  $\alpha_{\gamma \circ \gamma'} = \alpha_{\gamma} \circ \alpha_{\gamma'}$ for all $(\gamma,\gamma') \in \cG^{(2)}=\cG \times_{s,t}\cG$.
\end{enumerate}

\subsubsection{Action of a longitudinally smooth groupoid}

Let $G\overset{t,s}\gpd M$ be a longitudinally smooth groupoid with an atlas $(U_i,q_i)_{i\in I}$. We put $s_i=s\circ q_i$ and $t_i=t\circ q_i$.

\begin{definition}\label{dfn:Galg}
A $G$-algebra is a $C_0(M)$-algebra $A$ together with an isomorphism of $C_0(U_i)$-algebras $\alpha^i: s_i^{\ast}A \to t_i^{\ast}A$ for every $i\in I$. 
\begin{enumerate}
\item The isomorphism $\alpha^{i}$ is a family $(\alpha^{i}_u)_{i\in I}$ of isomorphisms $\alpha^{i}_u : A_{s_i(u)} \to A_{t_i(u)}$. We require that if $\gamma\in G$ is represented by two elements $u_i\in U_i$ and $u_j\in U_j$ (with $i,j\in I$), then $\alpha^{i}_{u_i}=\alpha^{j}_{u_j}$.
\item By (a), we get a well defined isomorphism $\alpha_\gamma:A_{s(\gamma)}\to A_{t(\gamma)}$. We require that for every composable $\gamma,\gamma'\in G$, we have $\alpha_{\gamma\gamma'}=\alpha_\gamma\circ \alpha_{\gamma'}$.
\end{enumerate}
\end{definition}

\begin{definition}
Let $(A,\alpha)$ and $(B,\beta)$ be $G$-algebras.\label{dfn:equivt}
\begin{enumerate}
\item  A morphism $\phi : A \to B$ is said to be $G$-equivariant if  it is $C_0(M)$-linear and for every $\gamma\in G$ we have $\phi_{t(\gamma)}\circ \alpha(\gamma) = \beta(\gamma)\circ \phi_{s(\gamma)}$.

\item\label{dfn:equivtMult} More generaly let $\phi : A \to \cM(B)$ be a morphism. Let $D=G(\phi)\oplus (0\oplus B)\subset A\oplus \cM(B)$ where $G(\phi)=\{(x,\phi(x));\ x\in A\}$ is the graph of $\phi$. We say that $\phi $ is equivariant if there is an action of $G$ on $D$ such that the inculsions $A\to D$ and $B\to D$ are equivariant.
\end{enumerate}
\end{definition}

\begin{exs}
\begin{enumerate}
\item The algebra $C_0(M)$ is a $G$-algebra. For every $i\in I$, we have  $t^*(C_0(M))=C_0(U)=s^*(C_0(M))$; the action $\alpha $ is the identity. For $i\in I$, then at every $u \in U_i$ we associate the identity map ($\C\to \C$).  In a sense, this corresponds to the trivial representation.

\item More generally, let $Y\subset M$ is a locally closed saturated subset (\ie such that for every $\gamma\in G$ we have $t(\gamma)\in Y\iff s(\gamma)\in Y$). Then $C_0(Y)$ is a $H(\cF)$-algebra. In that case, For every $i\in I$, we have $t^{-1}(Y)=s^{-1}(Y)$ since $Y$ is saturated and  $t^*(C_0(Y))=C_0(t^{-1}(Y))=C_0(s^{-1}(Y))=s^*(C_0(Y))$. Again, the action $\alpha $ is the identity.

\end{enumerate}
\end{exs}

\subsubsection{Covariant representations and full crossed products}\label{sec:covreps}

Let us very briefly extend some constructions of \cite[\S 4 and 5]{AS1} to the more general case of our longitudinally smooth groupoid $G\gpd M$ with atlas $(U_i,q_i)_{i\in I}$. \begin{itemize}
\item When $f:N\to M$ is a smooth submersion of manifolds, we may define a Hilbert $C_0(M)$-module $\cE_f$ obtained by completion of the space $C_c(N;\Omega^{1/2}\ker(df))$ with respect to the $C_0(M)$ valued inner product defined by $\langle \xi,\eta\rangle(x)=\int_{z\in f^{-1}(x)}\overline{\xi(z)}\eta(z)$. This Hilbert module is endowed with an action of $C_0(N)$.
\item Let $i\in I$. We may then construct two Hilbert $C^*$-modules $\cE_{t_i}$ and $\cE_{s_i}$ over $C_0(M)$.
\item As $C_0(M)$ sits in the multiplier algebra of $C^*(G)$, every representation $\pi _G$ of $C^*(G)$ on a Hilbert space $\cH$ gives rise to a representation $\pi_M$ of $C_0(M)$. 
\item The representation $\pi $ is then characterized by $\pi_M$ and, for every $i\in I$, a unitary $V_i\in \cL(\cE_{s_i}\otimes_{C_0(M)}\cH,\cE_{t_i}\otimes_{C_0(M)}\cH)$ intertwining the representations of $C_0(U)$. It therefore defines a measurable family of unitaries $U_u:H_{s_i(u)}\to H_{t_i(u)}$. We require that $U_u$ only dpends on the class of $q_i(u)$ in $G$ (almost everywhere) and that (almost everywhere) it determines a representation of the groupoid $G$. See \cite[\S 5.2]{AS1} for the details.
\end{itemize}

Let $G$ act on a $C^*$-algebra $A$ and let $\pi_A$ be a representation of $A$ on a Hilbert space $\cH$. Using the morphism from $C_0(M)$ to  the multiplier algebra of $A$, we obtain a representation of $C_0(M)$ to $\cL(\cH)$. For every $i\in I$, as the image of $C_0(M)$ sits in the center of  the multiplier algebra of $A$, we have representations $\pi_A^{s_i}:s_i^*(A)\to \cL(\cE_{s_i}\otimes_{C_0(M)}\cH)$ and $\pi_A^{t_i}: t_i^*(A)\to \cL(\cE_{t_i}\otimes_{C_0(M)}\cH)$.

A \emph{covariant representation} of $G$ and $A$ is given by a representation of $\pi_G$ of $C^*(G)$ and a representation $\pi_A$ of $A$ in the same Hilbert space $\cH$ such that the two representations of $C_0(M)$ agree and, for every $i\in I$, the unitary $V_i$ intertwines $\pi_A^{s_i}\circ \alpha^i$ with $\pi_A^{t_i}$. 

Then the closed linear span of $\pi_A(a)\pi_G(x)$ where $a$ runs over $A$ and $x$ over $C^*(G)$ is a $*$-subalgebra of $\cL(\cH)$. 

\begin{definition}\label{dfn:fullcrosprod}
The \emph{full crossed product} $A\rtimes G$ is the completion of this linear span with respect to the supremum norm over all covariant representations.
\end{definition}

Using the ``regular representations'' on $L^2(G_x)$, one may also construct a natural reduced crossed product.

\subsubsection{Actions of a longitudinally smooth groupoid on Hilbert modules}

Let $G$, $(U_i)_{i\in I}$, $s_i,t_i$ be as above.

\label{section:actHilbmod}
Let $(A,\alpha)$ be a  $G$-algebra and $\cE$ a Hilbert module over  $A$. As usually, we may define an action of $G$ on $\cE$ by saying that it is just given by an action of $G$ on the $C^*$-agebra $\cK(\cE\oplus A)$ in such a way that the natural morphism $A\to \cK(\cE\oplus A)$ is equivariant.
\\
This amounts to giving, for any $i\in I$, an isomorphism $\tilde \alpha :\cE\otimes_A s_i^*A\to \cE\otimes_A t_i^*A$ of Banach spaces, which corresponds to a family of isomorphisms $\tilde\alpha_u:\cE_{s_i(u)}\to \cE_{t_i(u)}$. We need compatibility with $\alpha$ which means that for every $x\in A_{s_i(u)}$ and $\xi,\zeta\in \cE_{s_i(u)}$, we have $\tilde \alpha_u(\xi x)=\tilde \alpha_u(\xi ) \alpha_u( x)$ and $\alpha_u(\langle \xi|\zeta\rangle)=\langle \tilde \alpha_u(\xi ) |\tilde \alpha_u(\zeta)\rangle$.
\\
As above, we require that $\tilde\alpha_u$ only depends on the class of $u$ in $G$ and that the so defined $\tilde \alpha_\gamma:\cE_{s(\gamma)}\to \cE_{t(\gamma)}$  for $\gamma\in H(\cF)$ defines a morphism of groupoids, which means that $\tilde \alpha_{\gamma\gamma'}=\tilde \alpha_{\gamma}\tilde \alpha_{\gamma'}$.
\\
Note also that, given an action of $G$ on a $\cE$, we obtain for any $i\in I$,  an isomorphism of $C_0(U_i)$-algebras $\cK(\cE\otimes_A s_i^*A)\to \cK(\cE\otimes_A t_i^*A)$ and of their multipliers $\check \alpha_U :\cL(\cE\otimes_A s_i^*A)\to \cL(\cE\otimes_A t_i^*A)$.

\subsection{\texorpdfstring{$G$-equivariant $KK$-theory}{G-Equivariant KK-theory}}

Let $G\overset{t,s}\gpd M$ be a longitudinally smooth groupoid with an atlas $(U_i,q_i)_{i\in I}$. We put $s_i=s\circ q_i$ and $t_i=t\circ q_i$.

Here we use the apparatus developed in the previous sections to construct the left-hand side of the Baum-Connes conjecture in classical terms (\eg as in \cite{LeGall}). Namely we define the groups $KK_{G}(A,B)$ in \S \ref{sec:eqKasp}. The difficulty is to construct the Kasparov product; we do this in \S \ref{sec:Kasprod}.

\subsubsection{Equivariant Kasparov cycles}\label{sec:eqKasp}

We may of course define graded $G$ algebras, graded Hilbert modules...

In what follows, all algebras will be $\Z/2\Z$-graded and all commutators are graded ones.

Also, all the $C^*$-algebras and Hilbert $C^{\ast}$-modules that we will consider are supposed to be separable

Recall the following from \cite{Kasparov1981}:
\begin{itemize}
\item Let $A,B$ be graded $C^*$-algebras. An $(A, B)$-bimodule is a pair $(\cE,\pi_A)$ where $\cE$ is a $B$-Hilbert $C^{\ast}$-module and $\pi_A : A \to \cL(\cE)$ a representation which preserves the degree. For every $\xi \in \cE$ and $a \in A$ we denote $a\xi = \pi_A(a)(\xi)$.
\item A Kasparov $(A, B)$-bimodule is a triple $(\cE,\pi_A,F)$ where $(\cE,\pi_A)$ is an $(A, B)$-bimodule and $F\in \cL(\cE)$ is of degree $1$ (for the grading) and for all $a\in A$, the elements $[F,\pi_A(a)]$, $(F-F^*)\pi_A(a)$ and $(1-F^2)\pi_A(a)$ are all in $\cK(\cE)$.
\end{itemize}

\begin{definition}\label{KKcycles}
Let $(A,B)$ be $G$-algebras. A \emph{$G$-equivariant Kasparov $(A,B)$ bimodule} is a Kasparov $(A,B)$ bimodule $(\cE,\pi_A,F)$ such that:
\begin{enumerate}
\item $\cE$ is endowed with an action  of $G$ (\cf sect. \ref{section:actHilbmod}) and the representation $\pi_A:A\to \cL(\cE)=\cM(\cK(\cE))$ is $G$-equivariant (in the sense of def. \ref{dfn:equivt}.\ref{dfn:equivtMult})
\item for every $i\in I$ and $h\in C_0(U_i)$,  we have $(\check \alpha_i(F\otimes 1)-F\otimes 1)h\in \cK(\cE\otimes _At_i^*A)$.
\end{enumerate}
\end{definition}

\begin{itemize}
\item Two $G$-equivariant Kasparov bimodules $(\cE,\pi_A,F), (\cE',\pi'_A,F')$ are \emph{unitarily equivalent} if there exists a $G$-equivariant unitary $U \in \cL(\cE,\cE')$ of degree $0$ which satisfies $UFU^{\ast}=F'$ and $U\pi_A(a)U^{\ast}=\pi'_A(a)$ for all $a \in A$. 
\item Denote by $E_{G}(A,B)$ the set of equivalence classes of $G$-equivariant Kasparov bimodules. 
\item A homotopy in $E_{G}(A,B)$ is an element of $E_{G}(A,B[0,1])$. We define $KK_{G}(A,B)$ to be the set of homotopy classes of elements of $E_{G}(A,B)$. 
\item The direct sum of Kasparov bimodules induces an abelian group structure in $KK_{G}(A,B)$.
\item We define the unit element $1_A\in KK_{G}(A,A)$ as the class of $(A,\iota_A,0)$, where $\iota_A(a)=a \in \cK(A)$ for all $a \in A$ where the action of $G$ on the $C^*$-module $A$ is the action of $G$ on the $C^*$-algebra $A$.
\end{itemize}

\subsubsection{Kasparov's descent morphism}

Given an equivariant Hilbert $B$ module $\cE$, we may define the crossed product $\cE\rtimes {G}=\cE\otimes_BB\rtimes G$ - and the same for the reduced crossed product. If we have an equivariant action $A\to \cL(E)$, we naturally obtain an action $A\rtimes {G}\to \cL(\cE\rtimes {G})$.

Let $(\cE,F)$ is an equivariant Kasparov $(A,B)$ bimodule. Put $F\widehat \otimes 1\in \cL(\cE\otimes_BB\rtimes G)=\cL(\cE\rtimes {G})$. We check as in \cite{LeGall} that $(\cE\rtimes {G},F\widehat \otimes 1)$ is a Kasparov $(A\rtimes {G},B\rtimes {G})$ bimodule. This construction gives a well defined \emph{descent morphism} $j_{G}:KK_{G}(A,B)\to KK(A\rtimes {G},B\rtimes {G})$. 

In the same way we also obtain a \emph{reduced descent morphism}.

\subsubsection{Kasparov product - a general approach}\label{sec:Kasprod}

In order to define the Kasparov product in this equivariant context, we need first to understand the analogue of Kasparov's ``technical theorem'' (\cite[\S 3, Theorem 4]{Kasparov1981}). It turns out that, in a sense, the original theorem actually applies when formulated in a slightly different way. In turn, this formulation contains many equivariant generalizations.

We start by recalling Voiculescu's theorem on quasi-central approximate units (\cf \cite{Voiculescu, Arveson}).

\begin{lemma}\label{lemma:qc_approx_units}
Let $D_1$ be a $C^*$-algebra and $D_2\subset D_1$ be a closed essential two sided ideal. Let $h\in D_1 $ be a strictly positive element with $\|h\|\le 1$. Let $b\in D_1$ and let $K\subset \cM(D_2)$ be a (norm) compact subset such that $[h,k]\in D_1$ for all $k\in K$; let $\varepsilon >0$.  Let $f_0:[0,1]\to [0,1]$ be a continuous function such that $f(0)=0$. Then there exists $f:[0,1]\to [0,1]$, continuous and such that $f(0)=0$, and $f_0\le f$, $\|b-f(h)b\|< \varepsilon$ and $\|[f(h),k]\|<\varepsilon$ for all $k\in K$. \hfill$\square$
\end{lemma}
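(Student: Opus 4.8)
The plan is to deduce this statement directly from the classical Voiculescu--Arveson theorem on quasi-central approximate units applied inside $\cM(D_2)$, using that $D_2$ is an essential ideal of $D_1$ so that $D_1$ sits faithfully inside $\cM(D_2)$. First I would recall the standard fact (\cf \cite{Arveson}) that a separable $C^*$-algebra $D_2$ admits a \emph{quasi-central} approximate unit: an increasing net (sequence, in the separable case) $(e_\lambda)$ in $D_2$ with $0\le e_\lambda\le 1$, $e_\lambda\to 1$ strictly, and $\|[e_\lambda,k]\|\to 0$ for every $k$ in any fixed separable $C^*$-subalgebra of $\cM(D_2)$ — and in particular, by compactness of $K$, uniformly for $k\in K$. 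The hypothesis $[h,k]\in D_1$ for $k\in K$, together with $[h,k]\in D_2\cdot(\text{bounded})$-type arguments, is what guarantees that the relevant commutators land in $D_2$ so that the quasi-central approximate unit of $D_2$ actually does the job; more precisely, one works in the separable $C^*$-subalgebra of $\cM(D_2)$ generated by $h$, $b$, $K$ and $D_2$.

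The key steps, in order: (1) replace $h$ by the sequence $g_n = g(h)$ for suitable continuous $g:[0,1]\to[0,1]$ vanishing at $0$; since $h$ is strictly positive in $D_1$, the elements $g_n(h)$ form an approximate unit of $D_2$ when $g_n\uparrow 1$ pointwise on $(0,1]$, and one checks $[g_n(h),k]\to 0$ for $k\in K$ using the functional-calculus estimate that $[g(h),k]$ is a norm-limit of polynomials in $h$ commuted with $k$, hence controlled by $\|[h,k]\|$ — this is where $[h,k]\in D_1$ enters. (2) Pass to a \emph{convex combination}: by the Hahn--Banach/convexity trick of Arveson, from the net of elements $g_n(h)$ one extracts a single element of the form $f(h)$ with $f$ continuous, $f(0)=0$, $f\ge f_0$ (one arranges $f\ge f_0$ simply by taking $f = \max(f_0, \tilde f)$ with $\tilde f$ the function produced, noting $\max(f_0,\tilde f)(0)=0$ and that replacing $\tilde f$ by a larger function only improves $\|b-f(h)b\|$ while one must re-check the commutator bound — alternatively, absorb $f_0$ from the start by only choosing $\tilde f\ge f_0$ among the admissible functions). (3) Arrange simultaneously $\|b-f(h)b\|<\varepsilon$ (possible since $f(h)\to 1$ strictly and $b\in D_1\subset\cM(D_2)$, and $f(h)b\to b$ because $b$ can be approximated by elements of $D_2$... here one uses that $b\in D_1$ and $D_1 D_2 \subseteq D_2$, so $f(h)b\to b$ follows from $f(h)$ being an approximate unit of $D_2$ and a density argument) and $\|[f(h),k]\|<\varepsilon$ for all $k\in K$, which is exactly the quasi-centrality, uniform over the compact set $K$.

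The main obstacle I anticipate is step (2)--(3): reconciling the monotonicity requirement $f\ge f_0$ with the convexity argument that produces the single good function $f$, and checking that $f(h)b\to b$ genuinely holds for $b\in D_1$ (not just $b\in D_2$) — this rests on the essentiality of $D_2$ in $D_1$, which forces $D_1\hookrightarrow\cM(D_2)$ and makes any approximate unit of $D_2$ an approximate unit for $D_1$ acting on $D_2$, but the convergence $f(h)b\to b$ in norm for $b\in D_1$ requires the extra input that $b$ itself is \emph{in} $D_1$ where $h$ is strictly positive, so $b = \lim h^{1/n} b \cdot(\dots)$ type approximations are available. Everything else is a routine application of functional calculus estimates and the compactness of $K$.
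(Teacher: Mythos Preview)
The paper does not actually prove this lemma: it is stated as a recall of Voiculescu's quasi-central approximate unit theorem, ends with a $\square$ immediately after the statement, and simply refers to \cite{Voiculescu, Arveson}. Your overall plan --- build an approximate unit by functional calculus of $h$ and then invoke Arveson's Hahn--Banach/convexity trick --- is precisely the argument in those references, so in spirit you are aligned with what the paper intends.

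There is, however, a genuine gap in your step (1). You assert that $\|[g_n(h),k]\|\to 0$ because $[g(h),k]$ is ``a norm-limit of polynomials in $h$ commuted with $k$, hence controlled by $\|[h,k]\|$''. This is false: the bound $\|[h^m,k]\|\le m\|[h,k]\|$ grows with $m$ and gives no uniform control. Concretely, take $D_1=C_0((0,1],M_2)$, $D_2=C_0((0,1),M_2)$ (an essential ideal), $h(t)=\mathrm{diag}(t,t^2)$ (strictly positive in $D_1$), $k$ the constant matrix $\begin{pmatrix}0&1\\1&0\end{pmatrix}\in\cM(D_2)$, and $g_n(s)=s^{1/n}$; then $[h,k]\in D_1$ but $\|[g_n(h),k]\|=\sup_{t\in(0,1]}|t^{1/n}-t^{2/n}|=\tfrac14$ for every $n$. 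The hypothesis $[h,k]\in D_1$ is not a quantitative bound; its role is only to guarantee that $[g(h),k]\in D_1$ for every admissible $g$ (via polynomial approximation and closedness of $D_1\hookrightarrow\cM(D_2)$), so that the convexity argument is run in the correct space. The correct logic is the reverse of what you wrote: $g_n(h)\to 1$ strictly in $\cM(D_2)$, so $[g_n(h),k]\to 0$ only in a weak sense, and the Hahn--Banach/Mazur step is \emph{essential} to convert this into a single convex combination $f=\sum_i\lambda_i g_{n_i}$ --- automatically still satisfying $f(0)=0$, $f\ge f_0$, $0\le f\le 1$ --- with both $\|b-f(h)b\|<\varepsilon$ and $\sup_{k\in K}\|[f(h),k]\|<\varepsilon$. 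If your step (1) held as stated, the convexity trick would be superfluous: one could simply take $f=g_n$ for large $n$.
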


The following result is in fact proved by Kasparov in \cite[\S 3, Theorem 4]{Kasparov1981}. Formulated in this way, it further contains many generalizations of the Kasparov product \cite{Kasparov1988, BaajSkandalis, LeGall}. One immediately see that Higson's proof given in \cite{Higson1987} applies, so we omit it.

If $J$ is a closed two sided ideal in a $C^*$-algebra $B$, then $\cM(B;J)=\{x\in \cM(B);\ xB\subset J\}$.

\begin{thm} (\cf \cite[\S 3, Theorem 4]{Kasparov1981}).\label{thm:KaspM}
Let $D_1$ be a separable graded $C^*$-algebra and $D_2$ a graded closed essential two sided ideal in $D_1$. Let $b\in\cM(D_1;D_2)_+$. Let also $A_1$ be a graded $C^*$-subalgebra of $D_1$ containing a strictly positive element of $D_1$ and such that $A_2=A_1\cap D_2$, contains a strictly positive element of $D_2$. Let $K\subset \cM(D_2)$ be a compact subset such that, for every $x\in A_1$ and every $k\in K$, we have $[x,k]\in D_1$. Then there exists $M\in \cM(A_1;A_2)^{(0)}$ such that $0\le M\le 1$, $(1-M)b\in D_2$ and $[M,K]\subset D_2$.\hfill$\square$
\end{thm}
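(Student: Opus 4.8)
The plan is to deduce Theorem~\ref{thm:KaspM} from the classical Kasparov technical theorem \cite[\S 3, Theorem 4]{Kasparov1981} (or equivalently from Higson's treatment \cite{Higson1987}), essentially by a dictionary translation. First I would fix a strictly positive element $h\in A_1$ (hence also strictly positive in $D_1$, by the hypothesis that $A_1$ contains a strictly positive element of $D_1$), normalised so $\|h\|\le 1$, and note that $h':=h|_{A_2}$ is strictly positive in $A_2$ and in $D_2$. The key point is that $\cM(A_1;A_2)$ is precisely the ``derivation algebra'' in which the output multiplier $M$ is required to live: the condition $M\in\cM(A_1;A_2)^{(0)}$ with $0\le M\le 1$, $(1-M)b\in D_2$ and $[M,K]\subset D_2$ is exactly the conclusion of the technical theorem once one identifies the roles of the algebras.

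The main step is the construction of $M$ as a strict limit of $f_n(h)$ for a suitable increasing sequence of continuous functions $f_n:[0,1]\to[0,1]$ with $f_n(0)=0$. Here Lemma~\ref{lemma:qc_approx_units} (Voiculescu's quasi-central approximate unit result) does the heavy lifting: applied inductively with $D_1,D_2$ as given, with the fixed $h$, with $b$ taken to be the relevant element of $\cM(D_1;D_2)_+$ together with elements of a countable dense subset of $A_1$, and with the compact set $K$, it produces at each stage an $f_{n+1}\ge f_n$ so that $\|x-f_{n+1}(h)x\|$ is small for the $n$-th dense element $x$, $\|(1-f_{n+1}(h))b\|$ is small, and $\|[f_{n+1}(h),k]\|<2^{-n}$ for all $k\in K$. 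One then sets $M=\lim f_n(h)$ (strict limit in $\cM(A_1)$, which exists because $f_n(h)$ is increasing and bounded by $1$ and $h$ is strictly positive in $A_1$). That $M\in\cM(A_1;A_2)$ follows because $Mx=\lim f_n(h)x$ and $f_n(h)x\to x$ for $x$ in a dense subset of $A_1$ forces $Mx\in\overline{A_1 h'}\subset A_2$ type membership — more precisely $(1-M)x\in A_2$ for all $x\in A_1$, which together with $M x\in A_1$ gives $Mx\in A_1$ and $M A_1\subset A_1$; and $M(1-M)x\in A_2$ plus self-adjointness gives the ideal-valued condition. The conditions $[M,K]\subset D_2$ and $(1-M)b\in D_2$ are immediate from the norm estimates in the inductive construction, since $[M,k]=\lim[f_n(h),k]$ and $(1-M)b=\lim(1-f_n(h))b$ are norm limits of elements of $D_2$.

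The one point requiring a little care — and the main (minor) obstacle — is the \emph{gradedness}: $b$ and $h$ need not be homogeneous, and we want $M$ of degree $0$. This is handled as in \cite{Kasparov1981,Higson1987}: replace $h$ by $\frac12(h+\gamma h\gamma)$ where $\gamma$ is the grading automorphism (this is still strictly positive in $A_1$ and in $D_2$, and is now degree $0$), so that all $f_n(h)$ are degree $0$ and hence $M$ is too; and absorb the homogeneous components of $b$ and the elements of $K$ into the data of the inductive step, which is legitimate since Lemma~\ref{lemma:qc_approx_units} applies to any finite (hence compact) set and the estimates pass to finite sums. Finally I would remark that since nothing in the argument uses anything beyond the $C^*$-algebraic structure of $D_1\supset D_2$ and $A_1\supset A_2$, the same proof applies verbatim in all the equivariant settings (groups \cite{Kasparov1988}, groupoids \cite{LeGall}, Hopf algebras \cite{BaajSkandalis}, and our longitudinally smooth groupoids) once those are encoded by an appropriate choice of $D_1, D_2, A_1, A_2$ — which is exactly the use we will make of it in \S\ref{sec:Kasprod}.
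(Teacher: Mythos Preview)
The paper does not actually give a proof of this theorem: it states that Higson's proof in \cite{Higson1987} applies verbatim and ends with a $\square$. Your sketch correctly identifies the argument as the quasi\-central approximate unit construction \`a la Higson, and this is exactly the approach the paper has in mind.

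There is, however, a technical slip in your outline worth flagging. You fix $h$ strictly positive in $A_1$ and then write ``$h':=h|_{A_2}$'', but an element has no ``restriction'' to a subalgebra, and more importantly functions $f_n(h)$ of a strictly positive element of $A_1$ lie in $A_1$, not in $A_2$; the resulting strict limit $M$ would then be an approximate unit for $A_1$ (so essentially $M=1$), which gives $MA_1\subset A_1$ rather than the required $MA_1\subset A_2$. The correct choice is the strictly positive element $h_2\in A_2$ of $D_2$ provided by hypothesis: then each $f_n(h_2)\in A_2$, so the strict limit $M$ lies in $\cM(A_2)\hookrightarrow\cM(A_1)$ (this inclusion holds because $A_2$ is essential in $A_1$: if $a\in A_1$ annihilates $A_2$ it annihilates $h_2$, hence $D_2$, hence $a=0$ since $D_2$ is essential in $D_1$), and $MA_1\subset A_2$ is automatic because $A_2$ is an ideal in $A_1$. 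With $h_2$ in hand, the iterative application of Lemma~\ref{lemma:qc_approx_units} and the grading symmetrisation you describe go through, and the conditions on $(1-M)b$ and $[M,K]$ follow as you indicate. With this correction your argument is precisely Higson's, which is all the paper asks for.
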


One obtains easily a formulation which encodes many equivariant formulations of the product.

\begin{notation}
 Let $D$ be a separable graded $C^*$-algebra and $A\subset D$ a subalgebra containing a strictly positive element of $D^{(0)}$. 

Let $\cI$ denote the set of graded closed two-sided essential ideals $I$ of $D$ such that $I\cap A^{(0)}$ contains a strictly positive elements of $I$.

Let $D_1,D_2\in \cI$ such that $D_2\subset D_1$. Put $A_i=D_i\cap A$. Denote by $\bE_A(D_1,D_2)$ the set of $F\in \cM(A_2)^{(1)}$ such that:
\begin{center}
for all $x\in D_1$, we have $x(1-F^2)\in D_2$, $x(F-F^*)\in D_2$, $[x,F]\in D_2$.
\end{center}
In other words $(A_2,F)$ is a Kasparov $(A_1,A_2)$ bimodule and $(D_2,F)$
 is a Kasparov $(D_1,D_2)$ bimodule.
\end{notation}

\begin{thm}\label{thm:abstractKKprod}
Let $D_0,D_1,D_2\in \cI$ such that $D_2\subset D_1\subset D_0$. Let $F_1\in \bE_A(D_0,D_1)$ and $F_2\in \bE_A(D_1,D_2)$. Denote by $F_1\sharp F_2=\{F\in \bE_A(D_0,D_2); F-F_2\in \cM(A_1;A_2),\ [F,F_1]\in \cM(A_2)_++A_2\}.$
\begin{enumerate}
\item For every $F_1\in \bE_A(D_0,D_1)$ and $F_2\in \bE_A(D_1,D_2)$ the set $F_1\sharp F_2$ is non empty and path connected. 
\item The path connected component of $F\in F_1\sharp F_2$ in $\bE_A(D_0,D_2)$ only depends on the path connected components of $F_1\in  \bE_A(D_0,D_1)$ and of $F_2\in \bE_A(D_1,D_2)$.
\item (Associativity). Let $D_3\in \cI$ with $D_3\subset D_2$ and $F_3\in \bE_A(D_2,D_3)$. Let $F'_1\in F_1\sharp F_2$, $F'_2\in F_2\sharp F_3$. Then $F'_1\sharp F_3$ and $F_1\sharp F'_2$ are contained in the same path connected component of $\bE_A(D_0,D_3)$.
\end{enumerate}
\begin{proof}
The proof is exactly the same as in the ``classical'' case (\cf \cite{Kasparov1981, ConnesSkandalis, SkandalisJFA}).

For instance, to establish that $F_1\sharp F_2$ is non empty, we take $Q=C^*(D_0,F_1,F_2)$. Let $K$ be a compact subset of $Q$ generating $Q$  as a closed space and let $b$ be a strictly positive element of $Q\cap \cM(D_1,D_2)$.

Apply then Theorem \ref{thm:KaspM}, and put $F=M^{1/2}F_1+(1-M)^{1/2}F_2$.

If we start with paths $F_1^t\in \bE_A(A_0,A_1)$, $F_2^t\in \bE_A(A_1,A_2)$, we just take a bigger algebra: $Q=C^*(A_0,\{F_1^t,F_2^t;\ t\in [0,1]\})$.

The associativity is proved exactly as Lemma 22 in \cite{SkandalisJFA}.
\end{proof}
\end{thm}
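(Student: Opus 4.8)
The plan is to imitate the classical construction of the Kasparov product, with Theorem \ref{thm:KaspM} playing the role of Kasparov's technical theorem; the key observation is that all the equivariance requirements have been absorbed into the definitions of $\cI$ and $\bE_A$, so that a condition of the form ``$x\cdot(\text{equivariance defect})\in D_2$'' has exactly the same shape as a Fredholm condition and the classical argument needs no modification. For assertion (1), to see $F_1\sharp F_2\neq\emptyset$, I would form the separable graded $C^*$-algebra $Q$ generated by $D_0$ together with $F_1$ and $F_2$ inside $\cM(D_2)$; then $D_0$ is an essential ideal of $Q$, and the hypotheses $F_i\in\bE_A$ say precisely that the relevant relative multiplier algebras of $Q$ (with respect to the ideals inherited from $D_1$ and $D_2$, intersected with $A$) contain strictly positive elements, so Theorem \ref{thm:KaspM} applies. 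Choosing a compact generating set $K$ of $Q$ and a strictly positive $b\in Q\cap\cM(D_1,D_2)$ produces $M$ with $0\le M\le1$, $M\in\cM(A_1;A_2)^{(0)}$, $(1-M)b\in D_2$ and $[M,K]\subset D_2$; then $F=M^{1/2}F_1+(1-M)^{1/2}F_2$ does the job, the verifications $F\in\bE_A(D_0,D_2)$, $F-F_2\in\cM(A_1;A_2)$ and $[F,F_1]\in\cM(A_2)_++A_2$ being the same routine computations as in \cite{Kasparov1981, ConnesSkandalis}.

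For path-connectedness of $F_1\sharp F_2$, and more generally for assertion (2), I would run the same argument over the interval. Replace each $D_i$ by $D_i[0,1]$ and $A$ by $A[0,1]$, and given paths $F_1^t\in\bE_A(D_0,D_1)$ and $F_2^t\in\bE_A(D_1,D_2)$ form the $C[0,1]$-algebra $Q=C^*(D_0[0,1],\{F_1^t,F_2^t:t\in[0,1]\})$; applying the evident $C[0,1]$-linear form of Theorem \ref{thm:KaspM} (include $C[0,1]$ among the generators) yields a path $F^t=(M^t)^{1/2}F_1^t+(1-M^t)^{1/2}F_2^t$ with $F^t\in F_1^t\sharp F_2^t$ for every $t$. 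Specialising to constant paths $F_1^t\equiv F_1$, $F_2^t\equiv F_2$ while enlarging $Q$ to contain two prescribed representatives of $F_1\sharp F_2$ imposed at the endpoints connects those two representatives inside $F_1\sharp F_2$, and the version with genuinely moving $F_i^t$ gives (2).

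For associativity (3), I would follow Lemma~22 of \cite{SkandalisJFA} essentially verbatim: apply Theorem \ref{thm:KaspM} once to the algebra generated by $D_0$ and all three operators $F_1,F_2,F_3$, with the two nested ideals coming from $D_1$ and $D_2$, producing a pair of commuting-modulo-$D_3$ positive contractions subordinate to the filtration $D_3\subset D_2\subset D_1$; forming the standard three-term combination $G$ built from $F_1,F_2,F_3$ and square roots of these contractions, one checks that $G$ lies in $F_1'\sharp F_3$ for some $F_1'\in F_1\sharp F_2$ and simultaneously in $F_1\sharp F_2'$ for some $F_2'\in F_2\sharp F_3$. Since by part (2) the path component of $F_1'\sharp F_3$ depends only on that of $F_1'$ in the path-connected set $F_1\sharp F_2$ (and similarly for $F_1\sharp F_2'$), this gives the claim for the arbitrary representatives appearing in the statement.

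\textbf{Where the work is.} The only point requiring genuine attention is to check, at each invocation, that the ideals produced along the way are again essential and meet $A^{(0)}$ in a set containing a strictly positive element --- i.e. that they still fall under the hypotheses of Theorem \ref{thm:KaspM} --- and that the various commutators land in the prescribed ideal rather than merely in a larger multiplier algebra. Once these bookkeeping verifications are in place, the whole proof is a transcription of the classical one in \cite{Kasparov1981, ConnesSkandalis, SkandalisJFA}; in particular, as stressed above, the longitudinally smooth and equivariant nature of the setting contributes no difficulty, since it has been encoded in $\bE_A$ and $\cI$ from the outset.
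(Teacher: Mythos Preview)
Your proposal is correct and follows essentially the same route as the paper: both construct $F=M^{1/2}F_1+(1-M)^{1/2}F_2$ from Theorem~\ref{thm:KaspM} applied to $Q=C^*(D_0,F_1,F_2)$ with $K$ a compact generating set and $b$ strictly positive in $Q\cap\cM(D_1,D_2)$, handle paths by enlarging $Q$ to contain the whole family $\{F_1^t,F_2^t:t\in[0,1]\}$, and refer associativity to Lemma~22 of \cite{SkandalisJFA}. Your write-up is in fact somewhat more explicit than the paper's (which simply says the proof is ``exactly the same as in the classical case''), particularly in flagging the bookkeeping about ideals remaining in $\cI$; but there is no substantive difference in strategy.
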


We now introduce further notation in order to relate this theorem with equivariant $KK$-theory.

\begin{notation}
 Let $A,\bA$ be separable graded $C^*$-algebras. Let $\varphi,\psi:\cM(A)\to \cM(\bA)$ be two grading preserving strictly continuous morphisms. 

Let $\cJ$ denote the set of closed two-sided essential ideals $I$ of $A$ such that $\varphi(I)\bA=\psi(I)\bA$.

Let $A_1,A_2\in \cJ$ such that $A_2\subset A_1$. Put $\bA_i=\varphi(A_i)\bA$. Denote by $\bE_{\varphi,\psi}(A_1,A_2)$ the set of $F\in \cM(A_2)^{(1)}$ such that:
\begin{enumerate}
\item  for all $x\in A_1$, we have $x(1-F^2)\in A_2$, $x(F-F^*)\in A_2$, $[x,F]\in A_2$, in other words $(A_2,F)$ is a Kasparov $(A_1,A_2)$ bimodule;
\item $(\varphi-\psi)(F)\in \cM(\bA_1;\bA_2)$ (``equivariance property'').
\end{enumerate}
\end{notation}

As an immediate consequence of Theorem \ref{thm:abstractKKprod} we have:

\begin{cor}\label{cor:abstractKKprod}
\begin{enumerate}
\item For every $F_1\in \bE_{\varphi,\psi}(A_0,A_1)$ and $F_2\in \bE_{\varphi,\psi}(A_1,A_2)$ the set $F_1\sharp F_2$ is non empty and path connected. 
\item The path connected component of $F\in F_1\sharp F_2$ only depends on the path connected components of $F_1\in  \bE_{\varphi,\psi}(A_0,A_1)$ and of $F_2\in \bE_{\varphi,\psi}(A_1,A_2)$.
\item (Associativity). Let $A_3\in \cI$ with $A_3\subset A_2$ and $F_3\in \bE_{\varphi,\psi}(A_2,A_3)$. Let $F'_1\in F_1\sharp F_2$, $F'_2\in F_2\sharp F_3$. Then $F'_1\sharp F_3$ and $F_1\sharp F'_2$ are contained in the same path connected component of $\bE_{\varphi,\psi}(A_0,A_3)$.
\end{enumerate}
\begin{proof}

Let $\chi:\cM(A)\to \cM(A\oplus M_2(\bA))$ be the morphism $x\mapsto x\oplus
\begin{pmatrix}
\varphi(x)&0\\
0&\psi(x)
\end{pmatrix}$ and put $D=\chi(A)+(0\oplus M_2(\bA))\subset \cM(A\oplus M_2(\bA))$.

Let $A_1,A_2\in \cJ$ such that $A_2\subset A_1$.  Put $\bA_i=\varphi(A_i)\bA$ and  $D_i=\chi(A_i)+(0\oplus M_2(\bA_i))\subset D$.

We obviously have $\bE_{\varphi,\psi}(A_1,A_2)=\bE_{\chi(A)}(D_1,D_2)$. Therefore, Theorem \ref{thm:abstractKKprod} immediately applies.
\end{proof}
\end{cor}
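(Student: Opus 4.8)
The plan is to deduce the corollary directly from Theorem~\ref{thm:abstractKKprod}: I would package the pair $(\varphi,\psi)$ into a single ambient $C^*$-algebra with a distinguished subalgebra, so that the ``equivariance'' clause defining $\bE_{\varphi,\psi}$ becomes the ideal-membership clause defining $\bE_{\chi(A)}$. Concretely, let $\chi\colon\cM(A)\to\cM\bigl(A\oplus M_2(\bA)\bigr)$ be the grading-preserving, strictly continuous morphism
\[
\chi(x)=x\oplus\begin{pmatrix}\varphi(x)&0\\ 0&\psi(x)\end{pmatrix},
\]
and set $D=\chi(A)+\bigl(0\oplus M_2(\bA)\bigr)$, a separable graded $C^*$-algebra in which $\chi(A)$ is a subalgebra containing a strictly positive element of $D^{(0)}$ and $0\oplus M_2(\bA)$ is an ideal; for each $A_i\in\cJ$ put $\bA_i=\varphi(A_i)\bA=\psi(A_i)\bA$ and $D_i=\chi(A_i)+\bigl(0\oplus M_2(\bA_i)\bigr)$. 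Thus the pair $(D,\chi(A))$ fits the setup preceding Theorem~\ref{thm:abstractKKprod}.

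Next I would check that $D_2\subset D_1\subset D_0$ are graded closed two-sided \emph{essential} ideals of $D$, each meeting $\chi(A)^{(0)}$ in a set that contains a strictly positive element of $D_i$; that is, that each $D_i$ belongs to the index family $\cI$ attached to $(D,\chi(A))$. The strictly-positive-element conditions are inherited from the corresponding ones for the $A_i\subset A$ together with the defining property $\varphi(A_i)\bA=\psi(A_i)\bA$ of $\cJ$; essentiality is the one point at which the structure of $(\varphi,\psi)$ is genuinely used (an element of $D$ annihilating $0\oplus M_2(\bA)$ must have vanishing $M_2(\bA)$-component, and then lying in $D$ forces its $A$-component to vanish too). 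I expect this bookkeeping --- routine but a little fiddly --- to be the part requiring most attention.

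The conceptual step is the identification $\bE_{\varphi,\psi}(A_1,A_2)=\bE_{\chi(A)}(D_1,D_2)$ under the canonical inclusion $\cM(A_2)\hookrightarrow\cM(D_2)$. The three Kasparov conditions ``$x(1-F^2),\,x(F-F^*),\,[x,F]\in A_2$ for $x\in A_1$'' transcribe verbatim on $\chi(A_1)$; and the equivariance requirement $(\varphi-\psi)(F)\in\cM(\bA_1;\bA_2)$ is precisely what makes $\chi(y)(1-F^2)$, $\chi(y)(F-F^*)$ and $[\chi(y),F]$ land in $D_2=\chi(A_2)+\bigl(0\oplus M_2(\bA_2)\bigr)$ for $y\in A_1$, since on the diagonal the discrepancy between the $\varphi$-entry and the $\psi$-entry of these expressions is measured by $(\varphi-\psi)(F)$. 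Conversely any $F\in\bE_{\chi(A)}(D_1,D_2)$ restricts to an element of $\bE_{\varphi,\psi}(A_1,A_2)$ by the same computation.

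Finally, because the pairing $F_1\sharp F_2$ and every claim about path components and associativity in the corollary are formulated purely in terms of the sets $\bE_{\varphi,\psi}(A_i,A_j)$ and the multiplier ideals $\cM(A_i;A_j)$ (respectively $\cM(A_j)_++A_j$), all of which we have matched term-by-term with their $\bE_{\chi(A)}$- and $\cM(D_i;D_j)$-counterparts, assertions (1)--(3) of the corollary follow immediately from assertions (1)--(3) of Theorem~\ref{thm:abstractKKprod}, with no further analytic work.
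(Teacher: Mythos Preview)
Your proposal is correct and follows exactly the same route as the paper: define $\chi(x)=x\oplus\mathrm{diag}(\varphi(x),\psi(x))$, set $D=\chi(A)+(0\oplus M_2(\bA))$ and $D_i=\chi(A_i)+(0\oplus M_2(\bA_i))$, identify $\bE_{\varphi,\psi}(A_1,A_2)$ with $\bE_{\chi(A)}(D_1,D_2)$, and invoke Theorem~\ref{thm:abstractKKprod}. You have simply spelled out in more detail the bookkeeping (essentiality, strictly positive elements, the mechanism by which the equivariance clause becomes the ideal-membership clause) that the paper leaves as ``obviously.''
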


\begin{exs}
 It is very easy to apply this abstract theorem (Corollary \ref{cor:abstractKKprod}) to many equivariant situations.\begin{enumerate}\renewcommand\theenumii{\arabic{enumii}}
\renewcommand\labelenumii{\rm {\theenumii}.}
\item (\cf \cite{Kasparov1988}) If a second countable locally compact group $G$ acts on separable $C^*$-algebras $A$ and $B$.

An equivariant Kasparov $(A,B)$ bimodule is then a pair $(\cE,F)$ where:\begin{enumerate}
\item  $\cE$ is an $(A,B)$-equivariant Hilbert bimodule; 
\item $F\in \bE_{\varphi,\psi}(A_1,A_2)$ where we have put \begin{itemize}
\item $A_2=\cK(\cE)$ and $A_1=A+A_2$,
\item $\bA_i=C_0(G;A)$;
\item $\varphi,\psi:A_i\to C_b(G;A_i)\subset \cM(\bA_i)$ defined by $\varphi(a)(g)=a$ and $\psi(a)(g)=g.a$.
\end{itemize}
\end{enumerate}

\item (\cf \cite{BaajSkandalis}) Exactly in the same way, if $S$ is a separable Hopf algebra, given a $C^*$-algebra $A$ with an action $\alpha:A\to \cM(A\otimes S)$ of $S$, we just put $\bA=A\otimes S$ and let $\varphi:a\mapsto a\otimes 1$ and $\psi=\alpha$.

\item  (\cf \cite{LeGall}) If $G\overset{s,t}\rightrightarrows G^{(0)}$ is a second countable locally compact groupoid,  given a $C^*$-algebra $A$ with an action $\alpha:s^*A\to t^*A$ of $G$, we just put $\bA=t^*A$ and let $\varphi:a\mapsto t^*a\in\cM(t^*A)$ and $\psi(a)=\alpha(s^*a) $.
\end{enumerate}
\end{exs}

\subsubsection{\texorpdfstring{Kasparov product in $KK_{G}$}{Kasparov product in KK HF}}

Let $G$ be a longitudinally smooth groupoid with atlas $(U_i)_{i\in I}$. We assume that $I$ is countable.

Let $A_0,A_1,A_2$ be $G$-algebras. Let $(\cE_1,F_1)$ and $(\cE_2,F_2)$ be equivariant $(A_0,A_1)$ and $(A_1,A_2)$ cycles. Put $\cE=\cE_1\widehat \otimes _{A_1}\cE_2$. Put $F'_1=F_1\widehat\otimes 1$ and let $F'_2$ be an $F_2$ connection. Put $\check A_2=\cK(\cE)$, $\check A_1=\cK(\cE_1)\widehat \otimes 1+\check A_2$ and $\check A_0=A_0+\check A_1$ (where we denoted by $A_0$ its image in $\cL(\cE)$).

The algebras $\check A_i$ are $G$ algebras, the inclusions $\check A_2\subset \check A_1\subset \check A_0$ are equivariant and the pairs $(\check A_1,F'_1)$ and $(\check A_2,F'_2)$ are equivariant $(\check A_0,\check A_1)$ and $(\check A_1,\check A_2)$ cycles.

Put $(U,q)=\coprod _{j\in I}(U_j,q_j)$. Put $\hat t=t\circ q$ and $\hat s=s\circ q$.

The action of $G$ on $\check A_0$ gives a map $\alpha:\hat s^*(\check A_0)\to \hat t^*(\check A_0)$. Put $\bA_i=\hat t^*(\check A_i)$. Let $\varphi:\check A_0\to \cM(\bA_0)$ be the natural map $\hat t^*$: (defined by $\varphi(x)_{u}=x_{\hat t(u)}$ for all $u\in U$). Let $\psi:\check A_0\to \cM(\bA_0)$ be the composition of $\alpha $ with the map $\hat s^*$. In other words, $\psi(x)_u=\alpha_u(x_{\hat s(u)})$.

Let also $q\in C_0(U)$ be a strictly positive function.

The equivariance condition means exactly that $(\varphi-\psi)(F'_i)\in \cM(\bA_{i-1};\bA_{i})$.

We thus may apply Theorem \ref{thm:abstractKKprod} and obtain the existence of the Kasparov product in $KK_{G}$ with the usual properties:

\begin{thm}
There is a well defined bilinear product $$KK_{G}(A_0,A_1)\times KK_{G}(A_1,A_2)\to KK_{G}(A_0,A_2)$$ which is natural in all $A_i$'s and associative. The element $1$ acts as a unit element. 

Moreover, the Kasparov product is compatible with the descent morphisms.\hfill$\square$
\end{thm}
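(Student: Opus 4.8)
The plan is to construct the Kasparov product exactly along the lines of the non-equivariant case and of Le Gall's equivariant $KK$-theory \cite{LeGall}, using the abstract technical theorem (Theorem \ref{thm:abstractKKprod}) and its reformulation (Corollary \ref{cor:abstractKKprod}) as the only non-formal ingredient. First I would fix representatives $(\cE_1,\pi_{A_0},F_1)\in E_G(A_0,A_1)$ and $(\cE_2,\pi_{A_1},F_2)\in E_G(A_1,A_2)$ and form the internal tensor product $\cE=\cE_1\widehat\otimes_{A_1}\cE_2$, which carries a natural $G$-action (section \ref{section:actHilbmod}) and a $G$-equivariant representation of $A_0$. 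Set $F'_1=F_1\widehat\otimes 1$; since $F_1$ is equivariant so is $F'_1$, and choose a $G$-equivariant $F_2$-connection $F'_2$ (which exists by the usual averaging-type argument, as in \cite{LeGall}; this is where the assumption that the atlas index set $I$ is countable — hence $U=\coprod_j U_j$ is $\sigma$-compact and all the algebras in sight are separable — gets used). With $\check A_2=\cK(\cE)$, $\check A_1=\cK(\cE_1)\widehat\otimes 1+\check A_2$, $\check A_0=A_0+\check A_1$, and with $\bA_i=\hat t^*(\check A_i)$, $\varphi=\hat t^*$, $\psi=\alpha\circ\hat s^*$ as in the text, one checks directly that condition (2) of Definition \ref{KKcycles} for $F'_1$ and $F'_2$ amounts precisely to $(\varphi-\psi)(F'_i)\in\cM(\bA_{i-1};\bA_i)$, \ie $F'_1\in\bE_{\varphi,\psi}(\check A_0,\check A_1)$ and $F'_2\in\bE_{\varphi,\psi}(\check A_1,\check A_2)$. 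Corollary \ref{cor:abstractKKprod}(1) then gives that $F'_1\sharp F'_2$ is non-empty and path-connected, and I define the product of the two classes to be the class in $KK_G(A_0,A_2)$ of $(\cE,\pi_{A_0},F)$ for any $F\in F'_1\sharp F'_2$ — \eg $F=M^{1/2}F'_1+(1-M)^{1/2}F'_2$ with $M$ furnished by Theorem \ref{thm:KaspM}.

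Next I would check well-definedness. Independence of the chosen equivariant connection, and more generally of the various auxiliary choices, follows from Corollary \ref{cor:abstractKKprod}(2): different choices give elements of $F'_1\sharp F'_2$ lying in one path-connected component of $\bE_{\varphi,\psi}(\check A_0,\check A_2)$, hence operator-homotopic equivariant cycles. Homotopy invariance in each variable is obtained by running the whole construction with coefficients in $B[0,1]$ (using that $KK_G(A,B)$ is the set of homotopy classes in $E_G(A,B)$) together with the obvious functoriality, and bi-additivity is immediate from compatibility of all the constructions with direct sums.

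Then come the algebraic properties. That $1_{A_0}$ is a left unit and $1_{A_2}$ a right unit is proved by the standard explicit unitary equivalences of Kasparov cycles: $\cE_1\widehat\otimes_{A_1}(A_1,\iota_{A_1},0)\cong\cE_1$ $G$-equivariantly and $0$ is a legitimate connection, and symmetrically on the other side — so no technical theorem is needed here, only bookkeeping compatible with the $G$-actions. Associativity is exactly Corollary \ref{cor:abstractKKprod}(3) applied to $\cE_1\widehat\otimes\cE_2\widehat\otimes\cE_3$: the two bracketings of the triple product both land in the same path component of $\bE_{\varphi,\psi}(\check A_0,\check A_3)$. Naturality in the $A_i$ under $G$-equivariant morphisms follows because pullback of cycles along such a morphism is compatible with internal tensor products and with connections, hence carries $F'_1\sharp F'_2$ to the corresponding set for the pulled-back data. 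Finally, compatibility with the descent morphism $j_G$ (and its reduced analogue) is checked as in \cite{LeGall}: $(\cE_1\rtimes G)\widehat\otimes_{A_1\rtimes G}(\cE_2\rtimes G)$ is canonically the $A_2\rtimes G$-module $\cE\rtimes G$, $F'_1$ descends to $F'_1\widehat\otimes 1$, a $G$-equivariant connection descends to an $(F_2\widehat\otimes 1)$-connection, and the operator $M$ of Theorem \ref{thm:KaspM} descends to an operator realizing $j_G([\cE_1,F_1])\widehat\otimes j_G([\cE_2,F_2])$.

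The genuinely delicate point — and the only place where the topological pathology of a longitudinally smooth groupoid really enters — is the translation of Definition \ref{KKcycles}(2), a condition stated chart by chart via the operators $\check\alpha_i$ on $\cE\otimes_A t_i^*A$, into the single equivariance condition $(\varphi-\psi)(F)\in\cM(\bA_0;\bA_2)$ over the disjoint union $U=\coprod_j U_j$, together with the existence of an equivariant connection in that same chart-wise sense and the attendant separability bookkeeping (whence the countability of $I$). Once this is in place everything else is formal, since Theorem \ref{thm:abstractKKprod} already packages Kasparov's technical theorem in exactly the form needed; I expect this compatibility to be the main obstacle, the unit axiom being the only other item requiring an actual, though routine, computation.
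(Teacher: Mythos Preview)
Your proposal is correct and follows essentially the same route as the paper: form $\cE=\cE_1\widehat\otimes_{A_1}\cE_2$, set up the triple $\check A_2\subset\check A_1\subset\check A_0$, pass to the disjoint union $U=\coprod_j U_j$ to encode the chart-wise equivariance condition as $(\varphi-\psi)(F'_i)\in\cM(\bA_{i-1};\bA_i)$, and then invoke Theorem \ref{thm:abstractKKprod}/Corollary \ref{cor:abstractKKprod} for existence, well-definedness, and associativity. One small deviation: the paper does not ask for an \emph{a priori} $G$-equivariant connection --- it takes any $F_2$-connection $F'_2$ and observes that $(\check A_2,F'_2)$ is already an equivariant $(\check A_1,\check A_2)$ cycle (the equivariance defect of a connection lies in the right ideal because $\check\alpha_i(F'_2\otimes 1)$ is again an $F_2$-connection), so your averaging step is unnecessary, though harmless.
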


\section{A Baum-Connes conjecture for the telescopic algebra}
\label{sec:BCtelescope}

In this section, we construct the Baum-Connes map for telescopic algebra of a nicely decomposable singular foliation. 

\subsection{An abstract construction}

\subsubsection{Setting of the problem}


Let $\cF$ be a nicely decomposable foliation. We keep the notation of section \ref{subsec:telescope}. We put $\cG'_k=(\cG_k)_{|\Omega_{k-1}\cap W_k}$.

There is a priori a ``left hand side'' for the $K$-theory of the Lie groupoid $C^*$-algebras $C^{\ast}(\cG_k)$ and $C^{\ast}(\cG'_k)$. In order to construct a left hand side and a Baum-Connes map for $C^*(\cT(q,j))$, we first wish to understand the morphisms and mapping cones associated with the morphisms $j_k$ and  $q_k$ at the left hand side level.

Let us first note that the morphism $j_k$ is just the inclusion $\cG'_k\subset \cG_k$ of the restriction of $\cG_k$ to the (saturated) open subset $\Omega_{k-1}\cap W_k$ of $W_k$. The mapping cone of such a morphism is just the $C^*$-algebra of a Lie groupoid (restriction of $\cG_k\times [0,1)$ to the open subset $(\Omega_{k-1}\cap W_k)\times [0,1)\cup W_k\times (0,1)$). We may then very easily construct a left hand side for it.

On the other hand, the morphism $q_k$ corresponds to a groupoid homomorphism, which is the identity at the level of objects ($W_k$) and a surjective submersion at the level of arrows. The corresponding map at the level of left hand sides is not as easy. Let us also note that, even knowing the map $(q_k)_*^{top}$ at the level of $K_*^{top}$, we need more in order to construct the left hand side for the mapping cone: this morphism only gives a short exact sequence $$0\to \coker (q_k)_*^{top}\to K_*^{top}(\cF)\to  \ker (q_k)_*^{top}\to 0$$ which is not sufficient in order to determine the group $K_*^{top}(\cF)$ that we are looking for.

In order to understand the $K$-theory of this mapping cone, one needs in fact to construct $(q_k)_* ^{top}$ as a $KK$-element. To do so, we need to write explicitly the topological $K$-groups as $K$-groups of $C^*$-algebras and the Baum-Connes maps as $KK$-elements. To that end we will assume that:

\begin{enumerate}\renewcommand\theenumi{\roman{enumi}}
\renewcommand\labelenumi{(\rm {\theenumi})}

\item  The Lie groupoids $\cG_k$ are Hausdorff. 

\item \label{condition:ii}The classifying spaces for proper actions of these groupoids are smooth manifolds. This is always the case when the groupoids $\cG_k$ are given by (connected) Lie group actions - or are Morita equivalent to those. This is indeed the case in most of singularity height one examples \ref{exs:basic} above - in fact, also in the examples of higher height given in section \ref{sec:exlengthk}.
\end{enumerate}

It turns out that condition (\ref{condition:ii}) can be somewhat bypassed (thanks to the Baum-Douglas presentation of $K^*_{top}$ (see \cite{BaumDouglas1, BaumDouglas2, BaumConnes, Tu})). We will discuss this in appendix \ref{app:Enotmanifold}.

\subsubsection{Submersions of Lie groupoids and left hand sides}
\label{subsubsec:Subm}
Let us recall some facts about the Baum-Connes map for groupoids (\cf \cite{BaumConnes, Tu}).

Let $G$ be a a Lie groupoid. If the classifying space for proper actions is a manifold $M$, then there is no inductive limit to be taken, and replacing if necessary $M$ by the total space of the vector bundle $(\ker dp)^*$, we may assume that the equivariant submersion $p:M\to G^{(0)}$ is $K$-oriented and then the left hand side is $K_*(C_0(M)\rtimes G)$ and the Baum-Connes map is just the wrong-way functoriality element $\widehat{p_!}\in KK(C_0(M)\rtimes G,C^*(G))$ constructed in \cite{ConnesSkandalis, HilsumSkandalis}. 

In Le Gall's equivariant $KK_G$ theory and terminology (\cite{LeGall}, see also Kasparov \cite{Kasparov1988}), the Baum-Connes assembly map is the element $\widehat{p_!}=j_G(p_!)$, where $p_!$ is the element of $KK_G(C_0(M),C_0(G^{(0)}))$ associated with the $G$ equivariant $K$-oriented smooth map $p$.

Before we proceed and construct a ``left hand side'' for the telescopic algebra, we examine the case of a morphism $\pi:G_0\to G_1$ of Hausdorff Lie groupoids $G_i\overset{t_i,s_i}\gpd G_i^{(0)}$ ($i=0,1$). We assume that $\pi$ is a submersion and that it is an inclusion of an open subset $\pi:G_0^{(0)}\subset G_1^{(1)}$ at the level of units.

Let $p_i:M_i\to G_i^{(0)}$ be smooth manifolds which are classifying spaces for proper actions for $G_i$. We will assume further that the $p_i$'s are $K$-oriented submersions and that the dimensions of the fibers are even.
\begin{itemize}
\item Let $W=M_0\times _{p_0,t_1}  G_1$. The groupoid $G_0$ acts properly on $W$; we thus obtain a Hausdorff locally compact quotient $W/G_0=M_0\times _{G_0}  G_1$. Note that $x\mapsto (x,\pi(p_0(x))$ defines a continous map from $M_0$ to $W$ and therefore $M_0\to M_0\times _{G_0}  G_1$.
\item The groupoid $G_1$ acts properly on the quotient space $M_0\times _{G_0}  G_1$. Since $M_1$ is universal, we obtain a $G_1$-equivariant map $M_0\times _{G_0}  G_1\to M_1$. Whence, by composition we have a $G_0$ equivariant map $q:M_0\to M_1$. As $p_1\circ q=p_0$, we obtain a morphism of proper groupoids $$q:M_0\rtimes G_0\to M_1\rtimes G_1$$ 
\item The map $q$ is naturally $K$-oriented, so it induces an element $q_!\in KK_{G_0}(C_0(M_0),C_0(M_1))$. Applying the descent map $j_{G_0}$ we obtain an element $$\widehat{q_!} = \tilde \pi_*(j_{G_0}(q_!)) \text{ in } KK(C_0(M_0)\rtimes G_0,C_0(M_1)\rtimes G_1)$$ where $\tilde \pi$ is the morphism $C_0(M_1)\rtimes G_0\to C_0(M_1)\rtimes G_1$ induced by the morphism $\pi$.
\end{itemize}

\begin{prop}\label{prop:lhspi}
The morphism $\pi:C^*(G_0)\to C^*(G_1)$ corresponds at the level of left hand side to the element $\widehat{q_!}$. More precisely, we have $\pi_{*}(\widehat{(p_0)_!}) =\widehat{q_!}\otimes \widehat{(p_1)_!}$.
\end{prop}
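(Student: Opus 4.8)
The equality to be established, $\pi_*\bigl(\widehat{(p_0)_!}\bigr)=\widehat{q_!}\otimes\widehat{(p_1)_!}$, is an identity in $KK\bigl(C_0(M_0)\rtimes G_0,\ C^*(G_1)\bigr)$, and the natural strategy is to deduce it by functoriality from an identity one level up, in equivariant $KK$-theory, before applying descent. More precisely, the plan is to show that the element $q_!\in KK_{G_0}(C_0(M_0),C_0(M_1))$ satisfies, after pushing forward along $\pi$,
\[
q_!\otimes \pi^*\bigl((p_1)_!\bigr)=(p_0)_!\quad\text{in}\quad KK_{G_0}\bigl(C_0(M_0),C_0(G_0^{(0)})\bigr),
\]
where $\pi^*(p_1)_!$ denotes the restriction of the $G_1$-equivariant element $(p_1)_!\in KK_{G_1}(C_0(M_1),C_0(G_1^{(0)}))$ to a $G_0$-equivariant element via the morphism $\pi$ (using that $G_0^{(0)}$ is an open subset of $G_1^{(0)}$). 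Once this is in hand, one applies the descent morphism $j_{G_0}$, which is multiplicative for the Kasparov product by the final theorem of Section~\ref{sec:longsmoothKK} (here in the classical Lie-groupoid setting of \cite{LeGall}), together with the naturality of descent with respect to the groupoid morphism $\pi$, to pass from $G_0$-equivariant $C_0(M_1)$ to $C_0(M_1)\rtimes G_1$ and from $C_0(G_0^{(0)})\rtimes G_0=C^*(G_0)$ to $C^*(G_1)$; this yields exactly the stated factorization.

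The identity $q_!\otimes\pi^*(p_1)_!=(p_0)_!$ is itself a statement about $K$-oriented smooth maps, and the plan is to prove it by the composition rule for wrong-way (shriek) elements, i.e. $(p_1\circ q)_!=q_!\otimes q^*\bigl((p_1)_!\bigr)$ for a composable pair of $K$-oriented submersions, which is the standard functoriality of \cite{ConnesSkandalis, HilsumSkandalis}. The two inputs are: (1) the relation $p_1\circ q=p_0$, which holds by the very construction of $q$ in the bullet points preceding the proposition (the map $M_0\to M_0\times_{G_0}G_1\to M_1$ is built to cover $p_0$); and (2) the compatibility of the $G_0$-action on all these spaces with the $G_1$-action pulled back along $\pi$, so that $q^*(p_1)_!$ as a $G_0$-equivariant element agrees with $\pi^*(p_1)_!$ restricted along $q$. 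Concretely I would: first record that $q$ is a $K$-oriented $G_0$-equivariant submersion (its $K$-orientation is induced from those of $p_0$ and $p_1$ via $\ker dq\oplus q^*\ker dp_1\cong \ker dp_0$), so $q_!\in KK_{G_0}(C_0(M_0),C_0(M_1))$ is well defined; then invoke the equivariant version of the composition formula for $K$-oriented maps in Le Gall's $KK_{G_0}$; then feed in $p_1\circ q=p_0$.

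The remaining, somewhat delicate bookkeeping step is to match $q^*\bigl((p_1)_!\bigr)$ — which a priori lives in $KK_{G_0}(C_0(M_1),C_0(G_0^{(0)}))$ after restricting the $G_1$-structure to $G_0$ along the open inclusion $G_0^{(0)}\subset G_1^{(0)}$ — with the element one obtains by first taking $(p_1)_!\in KK_{G_1}(\dots)$, restricting along $\pi$, and then using $q$. This is exactly a naturality square for the restriction functor $\pi^*:KK_{G_1}\to KK_{G_0}$ composed with pullback along $q$, and it commutes because all the geometric data (the spaces $M_i$, the submersions $p_i$, the orientations) are, by construction, compatible with the two groupoid actions through $\pi$. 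I expect this compatibility check — carefully tracking how the $G_0$-action on $C_0(M_1)$ and on $C_0(G_1^{(0)})|_{G_0^{(0)}}=C_0(G_0^{(0)})$ factors through $\pi$ — to be the main obstacle: it is not deep, but it requires unwinding the definition of $q$ through the universal property of $M_1$ and verifying equivariance at the level of the Kasparov bimodules representing $(p_1)_!$, rather than merely at the level of $K$-theory classes. Everything else (well-definedness of $q_!$, multiplicativity of descent, naturality of descent under $\pi$) is available from the cited literature and from the theorems already proved in this paper.
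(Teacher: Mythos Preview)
Your plan is correct and follows essentially the same route as the paper: reduce to the equivariant identity $q_!\otimes (p_1)_!=(p_0)_!$ in $KK_{G_0}$ via wrong-way functoriality and $p_1\circ q=p_0$, then apply the multiplicative descent $j_{G_0}$ and the compatibility $\pi_*\bigl(j_{G_0}((p_1)_!)\bigr)=\tilde\pi^*\bigl(j_{G_1}((p_1)_!)\bigr)$ to pass to $C^*(G_1)$. One point the paper makes explicit that you leave implicit: the composition formula from \cite{HilsumSkandalis} holds here without correction because $M_0\rtimes G_0$ is \emph{proper}, so the $\gamma$-obstruction vanishes; you should record this. Also, your ``delicate bookkeeping'' step is simpler than you fear: once $(p_1)_!$ is viewed as a $G_0$-equivariant element (via $\pi$), the Kasparov product $q_!\otimes (p_1)_!$ is already well defined in $KK_{G_0}$ and there is no separate $q^*$ to reconcile with $\pi^*$---your $q^*((p_1)_!)$ and $\pi^*((p_1)_!)$ are the same element by construction.
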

\begin{proof}
The morphism $p_1$ being $G_1$ equivariant, is also $G_0$-equivariant, (where $G_0$ acts through the morphism $\pi$). It gives rise to an element $\widecheck {(p_1)_!}\in KK(C_0(M_1)\rtimes G_0,C^*(G_0))$. The elements $\widecheck {(p_1)_!}$ and $\widehat {(p_1)_!}$ correspond to each other via the morphism $\pi:G_0\to G_1$, \ie we have $\pi_{*}(\widecheck {(p_1)_!})=\tilde \pi^*(\widehat {(p_1)_!})$.
Now, 
$$\begin{array}{cclc}
 \widehat{q_!}\otimes \widehat{(p_1)_!}&=&\tilde \pi_*(j_{G_0}(q_!))\otimes j_{G_1}((p_1)_!)\\
 &=&j_{G_0}(q_!)\otimes \tilde \pi^*(\widehat {(p_1)_!})&{(\clubsuit)}\\
 &=&j_{G_0}(q_!)\otimes \pi_{*}(\widecheck {(p_1)_!})\\
& =&\pi_{*}\big(j_{G_0}(q_!)\otimes j_{G_0}((p_1)_!)\big)&{(\clubsuit)}\\
& =&\pi_{*}(j_{G_0}(q_!\otimes (p_1)_!))&{(\clubsuit)}\\
&=&\pi_{*}(j_{G_0}((p_0)_!))&{(\diamondsuit)}
\end{array}$$
\begin{itemize}
\item Equalities $(\clubsuit)$ follow from the functoriality properties of the Kasparov product (\cf \cite{Kasparov1981, Kasparov1988, LeGall}).
\item Equality $(\diamondsuit)$ follows from the wrong way functoriality (\cf \cite{ConnesSkandalis, HilsumSkandalis}). Note that, since the groupoid $M_0\rtimes G_0$ is proper, the $\gamma$ obstruction appearing in this computation in \cite{HilsumSkandalis} vanishes.
\qedhere
\end{itemize}
\end{proof}

\subsubsection{Abstract left hand sides for mapping cones}\label{subsec:BCabstract}

Next, we wish to construct in a natural way the left hand side for the mapping cone of the morphism $\pi_{C^*}:C^*(G_0)\to C^*(G_1)$. We saw in prop. \ref{prop:lhspi} that the left hand side of $\pi$ is an element in $KK(C_0(M_0)\rtimes G_0,C_0(M_1)\rtimes G_1)$. The left hand side of the cone of $\pi$ should be a kind of ``mapping cone of this $KK$-element''. 

In this section, we abstractly construct this mapping cone up to $KK$-equivalence. We will give an explicit description of this left hand side (section \ref{subsec:lhscones}) and of the Baum-Connes map (section \ref{section:BCmap}) below.

Recall that a $KK$-element $x\in KK(A,B)$ can be given as a composition $$x=[f]^{-1}\otimes [g]\eqno (\spadesuit)$$
of a morphism $g:D\to B$ with an element $[f]^{-1}$ which is the $KK$-inverse of a morphism $f:D\to A$ which is invertible in $KK$-theory (\cf \cite[App. A]{VLHO}). We may then wish to define (up to $KK$-equivalence) the cone of $x$ as being the cone of $g$. 

Next, in order to understand the Baum-Connes map, we should construct a $KK$-element associated with a map between mapping cones. Let us state the following probably classical and quite obvious lemma.

\begin{lemma}\label{lem:trivlemma}
 Let $f_i:A_i\to B_i$ be morphisms of $C^*$-algebras ($i=0$ or $1$). Denote by $p_i:C_{f_i}\to A_i$ and $j_i:B_i(0,1)\to C_{f_i}$ the natural maps ($p_i(a_i,\phi)=a_i$ and $j_i(\phi)=(0,\phi)$). Let $x\in KK(A_0,A_1)$ and $y\in KK(B_0,B_1)$ satisfy $(f_1)_*(x)=f_0^*(y)$\begin{enumerate}
\item There exists $z\in KK(C_{f_0},C_{f_1})$ such that $(p_1)_*(z)=p_0^*(x)$ and $(j_1)_*(Sy)=j_0^*(z)$ where $Sy\in KK(B_0(0,1),B_1(0,1))$ is deduced from $y$.
\item If $x$ and $y$ are invertible, then so is $z$.
\end{enumerate}
\end{lemma}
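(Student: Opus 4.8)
The plan is to realize the $KK$-elements $x$ and $y$ via morphisms in the style of $(\spadesuit)$ and then build the $KK$-element $z$ between mapping cones functorially. First I would choose, using \cite[App.\ A]{VLHO}, factorizations $x=[f_A]^{-1}\otimes[g_A]$ with $f_A:D_A\to A_0$ a $KK$-equivalence and $g_A:D_A\to A_1$, and similarly $y=[f_B]^{-1}\otimes[g_B]$ with $f_B:D_B\to B_0$ a $KK$-equivalence and $g_B:D_B\to B_1$. The compatibility hypothesis $(f_1)_*(x)=f_0^*(y)$ says exactly that the square relating $f_0,f_1,g_A,g_B$ (after inverting $f_A,f_B$ in $KK$) commutes up to $KK$-equivalence; I would like to promote this to an honest commuting square of morphisms at the cost of replacing $D_A,D_B$ by mapping cylinders. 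Concretely, one forms a common ``resolving'' algebra $D$ together with morphisms to $A_0$ and $B_0$ making the relevant square strictly commute and still inducing $x$ and $y$ in $KK$; a standard mapping-cylinder/pullback argument produces such a $D$, using that $f_A$ and $f_B$ are $KK$-equivalences so nothing is lost in $K$-theory.

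Once the square
\[
\xymatrix{
D \ar[r]^{G} \ar[d]_{F} & D' \ar[d]^{F'} \\
A_0 \ar[r]_{f_0} & B_0
}
\]
strictly commutes (with $F,F'$ $KK$-equivalences representing, after composition with the resolutions, the classes $x$ and $y$), one gets an induced morphism of mapping cones $C_{G}\to C_{f_0}$, and likewise $C_{G}\to C_{f_1}$ on the other side via $g_A,g_B$ and the hypothesis. The element $z\in KK(C_{f_0},C_{f_1})$ is then defined as $[\text{the morphism }C_{G}\to C_{f_0}]^{-1}\otimes[\text{the morphism }C_{G}\to C_{f_1}]$, once I check that $C_{G}\to C_{f_0}$ is a $KK$-equivalence. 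That check is the routine five-lemma argument in $KK$: mapping cones sit in exact triangles $B_i(0,1)\to C_{f_i}\to A_i$, the morphism of cones induces $F'$ (a $KK$-equivalence) on the suspended ideals and $F$ (a $KK$-equivalence) on the quotients, hence is a $KK$-equivalence on the total object by the long exact sequences in both variables of $KK$ and the five lemma. This simultaneously gives part (b): if $x$ and $y$ are invertible then $F,F'$ may be taken to be genuine $KK$-equivalences on both ends of the resolution, so $z$ is a composite of $KK$-equivalences.

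For the stated relations, naturality of the mapping-cone exact sequence gives commuting diagrams: $(p_1)_*(z)=p_0^*(x)$ follows because on the quotient $A_i$ of $C_{f_i}$ the morphism of cones is compatible with $p_i$ and induces $x$; $(j_1)_*(Sy)=j_0^*(z)$ follows because on the suspended ideal $B_i(0,1)$ it is compatible with $j_i$ and induces $Sy$. Both are immediate from the construction of $z$ as $[C_G\to C_{f_0}]^{-1}\otimes[C_G\to C_{f_1}]$ together with the commuting squares $p_i\circ(C_G\to C_{f_i})=(\text{resolution})\circ(\text{proj})$ and $j_i\circ(\text{susp.\ resolution})=(C_G\to C_{f_i})\circ j_{C_G}$.

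The main obstacle will be the first step: rigidifying the $KK$-commuting square $(f_1)_*(x)=f_0^*(y)$ into a strictly commuting square of $*$-homomorphisms without destroying the $KK$-classes. One has to be a little careful that the ``resolving'' algebra $D$ can be chosen so that \emph{both} its maps to $A_0$ and to $A_1$ (through $f_0$ and through $B_1$) are controlled, i.e.\ so that the two induced morphisms $C_G\to C_{f_0}$ and $C_G\to C_{f_1}$ exist simultaneously. This is essentially a diagram-chase with mapping cylinders, but it is where all the real content sits; everything after it is formal manipulation of exact triangles and the five lemma in $KK$, exactly as in the classical mapping-cone computations (\cf \cite{Kasparov1981, ConnesSkandalis}).
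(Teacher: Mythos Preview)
Your strategy is different from the paper's on both parts, and while the overall shape is reasonable, the step you flag as ``where all the real content sits'' is a genuine gap rather than a routine diagram chase.

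\medskip
\textbf{Part (a).} The paper does not resolve $x$ and $y$ by $*$-homomorphisms at all. It picks Kasparov bimodules $(E_A,F_A)$ and $(E_B,F_B)$ representing $x$ and $y$, observes that the hypothesis $(f_1)_*(x)=f_0^*(y)$ is exactly the existence of a homotopy of Kasparov $(A_0,B_1)$-bimodules between $(E_A\otimes_{A_1}B_1,F_A\otimes 1)$ and $f_0^*(E_B,F_B)$, and uses that homotopy to produce a Kasparov $(A_0,Z_{f_1})$-bimodule (over the mapping \emph{cylinder} $Z_{f_1}$). Gluing this with $(E_B,F_B)[0,1)$ along the common fibre at $0$ gives a cycle for $KK(C_{f_0},C_{f_1})$ directly. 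No rigidification is needed because the homotopy itself is the data.

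Your route requires a single morphism $G:D\to D'$ making \emph{two} squares strictly commute simultaneously (one over $f_0$ and one over $f_1$), while keeping the vertical maps $KK$-equivalences and the compositions still representing $x$ and $y$. Even after factoring $x$ and $y$ \`a la $(\spadesuit)$ and resolving the element $[f_0\circ f_A]\otimes[f_B]^{-1}\in KK(D_A,D_B)$ again, you only get \emph{homotopy} commutativity of the two squares, not strict equality; turning each homotopy into a strict equality forces further mapping-cylinder replacements, and you must check these can be done compatibly so that a single $G$ serves both squares. This is doable, but it is several careful constructions, not a one-liner, and it is precisely what the paper's bimodule argument sidesteps.

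\medskip
\textbf{Part (b).} Here the two arguments are genuinely different and both are valid. The paper does not invoke a five-lemma: it reapplies (a) to $x^{-1},y^{-1}$ to get $z'\in KK(C_{f_1},C_{f_0})$ with the dual compatibilities, and then shows that $u_i=z\otimes z'$ (resp.\ $z'\otimes z$) satisfies $(p_i)_*(1-u_i)=0$ and $j_i^*(1-u_i)=0$; the first forces $1-u_i=(j_i)_*(d)$ for some $d$, and then $(1-u_i)^2=d\otimes j_i^*(1-u_i)=0$, so $u_i$ is invertible. Your two-out-of-three argument for $KK$-equivalences across a morphism of semi-split extensions is also correct (the mapping cones of the three vertical maps form a semi-split extension with two $KK$-contractible terms, forcing the third to be $KK$-contractible), and it has the advantage of not needing $z'$; but it relies on your construction of $z$ as a zig-zag of honest $*$-homomorphisms, so it inherits the gap from part (a). The paper's nilpotency argument, by contrast, uses only the compatibility relations $(p_1)_*(z)=p_0^*(x)$ and $j_0^*(z)=(j_1)_*(Sy)$ and works for any $z$ produced by (a).

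\medskip
In short: your plan can be completed, but the rigidification you call ``standard'' is the whole difficulty and is not carried out; the paper's direct bimodule construction for (a) is shorter and avoids it, and its nilpotency argument for (b) is independent of how $z$ was built.
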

\begin{proof}
\begin{enumerate}
\item Note that $z$ is not a priori unique. To construct it, one needs in fact to be more specific. Fix Kasparov bimodules $(E_A,F_A)$ representing $x$ and $(E_B,F_B)$ representing $y$; a Kasparov $(A_0,B_1[0,1])$ bimodule  $(E',F')$ realizing a homotopy between $(E_A\otimes _{A_1}B_1,F_A\otimes 1)$ and $f_0^*(E_B,F_B)$ gives rise to a Kasparov $(A_0,Z_{f_1})$, where $Z_{f_1}=\{(a_1,\phi)\in A_1\otimes B_1[0,1];\ f_1(a_1)=\phi(0)\}$ is the mapping cylinder of $f_1$, which can be glued with $(E_B,F_B)[0,1)$ to give rise to the desired element in $KK(C_{f_0},C_{f_1})$.
\item By (a) applied to $x^{-1}$ and $y^{-1}$, there exists $z'\in KK(C_{f_1},C_{f_0})$ such that $(p_1)_*(z')=p_1^*(x^{-1})$ and $(j_0)_*(Sy^{-1})=j_1^*(z')$. The Kasparov products $u_0=z\otimes z'$ and $u_1=z'\otimes z$ are elements in $KK(C_{f_i},C_{f_i})$ such that $(p_i)_*(1-u_i)=0$ and $j_i^*(1-u_i)=0$. From the first equality and the mapping cone exact sequence, it follows that there exists $d_i\in KK(C_{f_i},B_i(0,1))$ such that $1-u_i=(j_i)_*(d)$, and it follows that $(1-u_i)^2=(j_i)_*(d)\otimes (1-u_i)=d\otimes j_i^*(1-u_i)=0$, whence $u_i$ is invertible.
\qedhere
\end{enumerate}
\end{proof}
 
\begin{remark}\label{rem:commutK}
Note also that we have a diagram
\[
\xymatrix{
K_i(A_0) \ar[r]\ar[d]^{\otimes x}  &K_i(B_0) \ar[r]\ar[d]^{\otimes y}  &K_{1-i}(C_{f_0}) \ar[r]\ar[d]^{\otimes z}&K_{1-i}(A_0) \ar[r]\ar[d]^{\otimes x}  &K_{1-i}(B_0)\ar[d]^{\otimes y}  \\
K_i(A_1) \ar[r] &K_i(B_1) \ar[r] &K_{1-i}(C_{f_1}) \ar[r]&K_{1-i}(A_1) \ar[r]  &K_{1-i}(B_1)
}
\]
where the lines are exact (Puppe sequences) and the squares commute. It follows that if $x$ and $y$ induce isomorphisms in $K$-theory, then the same holds for $z$.
\end{remark}

\begin{remarks}\label{rem:trivrem}
\begin{enumerate}
\item It follows easily from this construction that given an element $x\in KK(A,B)$ the mapping cone $C_g$ does not depend on the decomposition $(\spadesuit)$ up to $KK$-equivalence. 
\item An alternative (and equivalent) way to construct the $K$-theory of the mapping cone of the $KK$-element $x$ is to write $x$ as an extension $0\to SB\otimes \cK\to D\to A\to 0$ and define this $K$-theory as being $K_*(D)$.
\item One can also define the $KK$-theory of this mapping cone as a relative $KK$-group \cite[Remark 3.7.c)]{Sk}. 
\end{enumerate}
\end{remarks}

\subsection{Baum-Connes map for mapping cones of submersions of Lie groupoids}\label{subsec:lhscones}

Let us come back to our morphism $\pi :G_0\to G_1$ of Hausdorff Lie groupoids which is assumed to be a submersion and an open inclusion at the level of objects. We assume that the classifying spaces for proper maps of $G_i$ are manifolds $M_i$. In section \ref{subsubsec:Subm}, we explained how to construct an equivariant map $q:M_0\to M_1$ that can be assumed to be a smooth submersion (up to replacing $M_0$ by a homotopy equivalent manifold).

As a consequence of Lemma \ref{lem:trivlemma} and Proposition \ref{prop:lhspi}, we see that, in order to construct the left hand side we need to give an explicit construction of the wrong-way functoriality element $\widetilde{\pi}_{\ast}(j_{G_0})(q_{!})\in KK(C^*(M_0\rtimes G_0),C^*(M_1\rtimes G_1))$. Here, using a double deformation longitudinally smooth groupoid we will give a groupoid $\bH$ which is a family over $[0,1]\times [0,1]$, whose vertical lines $\{i\}\times [0,1]$ can be interpreted as the Baum-Connes maps for the groupoid $G_i$ and whose horizontal lines $[0,1]\times \{0\}$ and $[0,1]\times \{1\}$ are respectively $q!$ and $[\pi]$.

We then may define the left hand side of $\pi$ as the groupoid $\bH$ restricted to $[0,1)\times \{0\}$ and construct the Baum-Connes map using the groupoid $\bH$ (restricted to $[0,1)\times [0,1]$).

In order to have a ``ready to glue'' groupoid, in view of  the case of the telescopic algebra (section \ref{subsec:BCtelescope}), we are lead to perform a slightly more complicated construction.




\subsubsection{Some ``classical'' constructions with groupoids}

Before proceeding to explain this construction, we recall some constructions based on Lie groupoids that we will use.

\paragraph{Pull-back groupoid.} Let $G\overset {t,s}\gpd G^{(0)}$ be a Lie groupoid, $M$ a smooth manifold and $q:M\to G^{(0)}$ a smooth submersion. The pull back groupoid $G_q^q$ is a subgroupoid $$G_q^q=\{(x,\gamma,y)\in M\times G\times M;\  q(x)=t(\gamma)\ \hbox{and}\ q(y)=s(\gamma)\}$$ of the product groupoid of $G$ with the pair groupoid $M\times M$. As $q$ is supposed to be a submersion, $G_q^q$ is a Lie groupoid (actually, a transversality condition suffices). If $q(M)$ meets all the $G$-orbits, the groupoids $G$ and $G_q^q$ are canonically Morita equivalent.

\paragraph{Actions on spaces.} Recall  that an action of a groupoid $G\overset {t,s}\gpd G^{(0)}$ on a space $X$ is given by a map $p:X\to G^{(0)}$ and the action $G\times_{s,p}X\to X$ denoted by $(\gamma,x)\mapsto \gamma.x$ with the requirements $p(\gamma.x)=t(\gamma),\ \gamma.(\gamma'.x)=(\gamma\gamma').x$ and $u.x=x$ if $u=p(x)$.

\paragraph{Semi-direct product.} If a groupoid $G$ acts on a space $X$, we may form the semi-direct product groupoid $X\rtimes G$: 
\begin{itemize}
\item as a set  $X\rtimes G=X\times _{t}G=\{(x,\gamma)\in X\times G; \ t(\Gamma)=x\}$;
\item $(X\rtimes G)^{(0)}=X$; we have $t(x,\gamma)=x$ and $s(x,\gamma)=\gamma^{-1}.x$;
\item the elements $(x,\gamma)$ and $(y,\gamma')$ are composable if $x=\gamma y$; the composition is then  $(x,\gamma)(y,\gamma')=(x,\gamma \gamma')$.
\end{itemize}
When $p$ is a submersion, $X\rtimes G$ is a Lie groupoid: it is the closed subgroupoid $\{(x,\gamma,y)\in G_p^p;\ x=\gamma.y\}$ of $G_p^p$.

\paragraph{Actions on groupoids.} This construction can be generalized: If $X\overset {t_X,s_X}\gpd X^{(0)}$ is a groupoid, we say that the action is by groupoid automorphisms (\cf \cite{RBrown}) if $G$ acts on $X^{(0)}$ through a map $p_0:X^{(0)}\to G^{(0)}$, we have $p=p_0\circ t_X=p_0\circ s_X$ and $\gamma.(xy)=(\gamma .x)(\gamma.y)$. There is a semi-direct product construction in this generalized setting.

\paragraph{Deformation to the normal cone.} The adiabatic deformation of a Lie groupoid $G$ with Lie algebroid $\gG$ was defined by Alain Connes in the particular case of the pair groupoid (\cf \cite{ConnesNCG}) and generalized by various authors (\cite{HilsumSkandalis, MonthPie, NWX}...). This is based to the notion of deformation to the normal cone that we briefly recall  (see also \cite{PCR, DS1}...).

Let $X$ be a submanifold of a manifold $Y$. Denote by $N_X^Y$ the total space of the normal bundle to $X$ in $Y$. There is a natural way to put a manifold structure to $Y\times \R^*\cup N_X^Y\times \{0\}$; denote this manifold by $\DNC(Y,X)$. 

The map $p:\DNC(Y,X)\to \R$ defined by $p(y,t)=t$ for $(y,t)\in Y\times \R^*$ and $p(\xi,0)=0$ for $\xi\in N_X^Y$ is a smooth submersion. For $J\subset \R$, we put  $\DNC_J(Y,X)=p^{-1}(J)$.

This construction is functorial: Given a commutative diagram of smooth maps $$\xymatrix{X\ar@{^{(}->}[r]\ar[d]_{f_X}&Y\ar[d]^{f_Y}\\X'\ar@{^{(}->}[r]&Y'}$$ where the horizontal arrows are inclusions of submanifolds, we naturally obtain a smooth map $\DNC(f):\DNC(Y,X)\to \DNC(Y',X')$. If $f_Y$ is a submersion and $X=X'\times _{Y'}Y$ then $\DNC(f)$ is a submersion.

\paragraph{Double deformations to the normal cone}

Let $Z$ be a smooth manifold, $Y$ a (locally) closed submanifold of $Z$ and $X$ a (locally) closed submanifold of $Y$. Then $\DNC(Y,X)$ is a (locally) closed submanifold of $\DNC(Z,X)$. 

Put then $$\DNC^2(Z,Y,X)=\DNC(\DNC(Z,X),\DNC(Y,X))$$

We have a submersion  $p_2:\DNC^2(Z,Y,X)\to \R^2$.  For every subset $L$ of $\R^2$, we put $$\DNC_L^2(Z,Y,X)=p_2^{-1}(L)$$

 By definition of the deformation to the normal cone $\DNC^2_{\R\times \R^*}(Z,Y,X)=\DNC(Z,X)\times \R^*$.
 
 By functoriality of the $\DNC$ construction,  $\DNC^2_{\R^*\times \R}(Z,Y,X)=\DNC(Z\times \R^*,Y\times \R^*)\simeq\DNC(Z,Y)\times \R^*$.

\paragraph{Deformation groupoids, adiabatic groupoids.}

From naturality, it follows that if $Y$ is a Lie groupoid and $X$ is a Lie subgroupoid of $Y$, then $\DNC(Y,X)$ is naturally endowed with a Lie groupoid structure - with objects $\DNC(Y^{(0)},X^{(0)})$, and target and source maps $\DNC(t)$ and $\DNC(s)$. Of course, if in the above diagram all the maps are groupoid morphisms, then $\DNC(f)$ is a morphism of groupoids too.

The \emph{adiabatic groupoid} of a Lie groupoid $G$ is just  $G_{ad}=\DNC_{[0,1)}(G,G^{(0)})=\gG\times \{0\}\cup G\times (0,1)$ (with base manifold $G^{(0)} \times [0,1)$). Note that the normal bundle $N_{G^{(0)}}^G$ is, by definition, the Lie algebroid $\gG$ of $G$.

It follows that, $Z$ is a Lie groupoid, $Y$ is a Lie subgroupoid of $Z$ and $X$ a Lie subgroupoid of $Y$, then $\DNC^2(Z,Y,X)$ is a Lie groupoid.

\subsubsection{The Baum-Connes map of a Lie groupoid via deformation groupoids}

Let $G$ be a Lie groupoid and let $M$ be a smooth manifold on which $G$ acts via a smooth onto submersion $p:M\to G^{(0)}$. We will not assume that $p$ is $K$ oriented but rather consider the total space of $(\ker dp)^*$. Note that, if $M$ is a classifying space for proper actions of $G$, then $(\ker dp)^* \to G^{(0)}$ is also a classifying space of proper actions, and it moreover carries a canonical $K$-orientation. So we can replace $M$ with $(\ker dp)^*$.

Put then $\Gamma_p=\DNC(G_p^p,M\rtimes G)$. As $p$ is supposed to be a surjective submersion, the groupoid $G_p^p$ is Morita equivalent to $G$. There is a canonical Morita equivalence bimodule $\cE$ of the $C^*$-algebras $C^*(G_p^p)$ and $C^*(G)$.

We have an exact sequence of $C^*$-algebras:
$$0\to C^*(G_p^p\times (0,1])\longrightarrow C^*((\Gamma_p)_{[0,1]})\overset{ev_0}{\longrightarrow}C^*(\ker (dp)\rtimes G)\to 0.$$
Note that $C^*(G_p^p\times (0,1])$ is contractible. 
It follows that $ev_0$ is invertible in $E$-theory.

We may then observe the diagram:
$$\xymatrix{C_0((\ker dp)^*)\rtimes G&C^*((\Gamma_p)_{[0,1]})\ar[l]_{\qquad ev_0}\ar[r]^{ev_1}&C^*(G_p^p)\ar@{-}[r]^{\quad\cE}&C^*(G)}$$
We thus obtain an element $\mu_M =[ev_0]^{-1}\otimes [ev_1]\otimes [\cE]\in E(C_0((\ker dp)^*)\rtimes G,C^*(G))$. Note that this $E$-theory coincides with $KK$-theory if the action of $G$ on $M$ is assumed to be amenable - and in particular, if it is proper.

If $M$ is the classifying space for proper algebras, the morphism on $K$-groups defined by $\mu_M$ is the Baum-Connes map.

\subsubsection{A double deformation construction}

Let now $G_0$ and $G_1$ be two Lie groupoids and let $\pih :G_0\to G_1$ be a groupoid morphism which is a smooth submersion whose restriction $\pih^{(0)}:G_0\to G_1$ is the inclusion of an open subset. Let $M_i$ be manifolds with actions of $G_i$. We assume that the maps $p_i:M_i\to G_i^{(0)}$ defining these actions are smooth submersions. Let also $q:M_0\to M_1$ be a smooth submersion which is equivariant, \ie $q(\gamma .x)=\pih(\gamma) q(x)$ for every $(x,\gamma)\in M_0\times _sG_0$. In other words, we assume that we have a morphism of semi-direct products $\widehat{\pih}:M_0\rtimes G_0\to M_1\rtimes G_1$ defined by $\widehat{\pih} (x,\gamma)=(q(x),\pih(\gamma))$. 

\medskip The groupoid $G_0$ acts on the open subspace $M'_1=q(M_0)$ of $M_1$ through the morphism $\pih$: just put $\gamma.q(x)=q(\gamma.x)=\pi(\gamma).q(x)$ for $x\in M_0$ and $\gamma\in G_0$ with $s(x)=p_0(x)=p_1(q(x))$.

\medskip We have inclusions of groupoids $M_0\rtimes G_0\subset (M_1'\rtimes G_0)_q^q\subset (G_0)_{p_0}^{p_0}.$
Indeed, $M_0\rtimes G_0=\{(x,\gamma,y)\in (G_0)_{p_0}^{p_0};\ x=\gamma .y\}$ and $(M_1'\rtimes G_0)_q^q=\{(x,\gamma,y)\in (G_0)_{p_0}^{p_0};\ q(x)=q(\gamma .y)\}$.

Let then $\cH_0$ be the double deformation Lie groupoid:
 $$\cH_0=\DNC^2\big((G_0)_{p_0}^{p_0},(M_1'\rtimes G_0)_q^q,M_0\rtimes G_0\big).$$
The groupoid $\cH_0$ is a family of groupoids indexed by $\R^2$. For every locally closed subset $L$ of $\R^2$, we may form the locally compact groupoid $(\cH_0)_L$.

 

Let $q':M_0\coprod M_1\to M_1$ be the map which coincides with $q$ on $M_0$ and the identity on $M_1$ and $p':M_0\coprod M_1\to G_1^{(0)}, p'=p_1\circ q'$. Define the groupoid  $$\cH_1=\DNC((G_1)_{p'}^{p'}\times \R^*,(M_1\rtimes G_1)_{q'}^{q'}\times \R^*)\simeq \DNC((G_1)_{p'}^{p'},(M_1\rtimes G_1)_{q'}^{q'})\times \R^*$$  (with objects $(M_0\coprod M_1)\times \R^*\times \R$).

For every locally closed subset $Y\subset \R^*\times \R$ we denote by $(\cH_1)_Y$ the restriction of $\cH_1$ to its saturated subset $(M_0\coprod M_1)\times Y$.

\subsubsection{A longitudinally smooth groupoid}

Note that $(M_1'\rtimes G_0)_q^q=\{(x,\gamma,y)\in (G_0)_p^p;\ q(x)=\pi(\gamma)q(y)\}$. In other words, $(M_1'\rtimes G_0)_q^q$ is the fibered product $(G_0)_{p_0}^{p_0}\times_{(G_1)_{p_0}^{p_0}}(M_1\rtimes G_1)_q^q$.

We therefore have a commutative diagram: $$\xymatrix{(M_1'\rtimes G_0)_q^q\ar@{^{(}->}[r]\ar[d]&(G_0)_{p_0}^{p_0}\ar[d]\\(M_1\rtimes G_1)_q^q\ar@{^{(}->}[r]&(G_1)_{p_0}^{p_0}}$$
which gives rise to a morphism $$\DNC((G_0)_{p_0}^{p_0},(M_1'\rtimes G_0)_q^q)\to \DNC((G_1)_{p_0}^{p_0},(M_1\rtimes G_1)_q^q)$$ which is a groupoid morphism, a submersion and the identity at the level of objects ($M_0\times \R$). We thus obtain a a morphism of groupoids $$\pi:(\cH_0)_{\R^*\times \R}\to \cH_1$$ which is a submersion. At the level of objects it is the inclusion $M_0\times \R^*\times \R\to (M_0\coprod M_1)\times \R^*\times \R$.



Let $Z_0=[0,1)\times [0,1/2]$, $Q=\{(u,v)\in \R^2;\ 0\le u\le v\le 1/2\}$ and $Z_1= Z_0\setminus Q=\{(u,v)\in Z_0;\ u>v\}$.

We may then construct a longitudinally smooth groupoid $\bH=(\cH_0)_Q\cup (\cH_1)_{Z_1}$ with atlas formed by $(\cH_0)_{Z_0}$ and $(\cH_1)_{Z_1}$, using the morphism $\pi$ in order to map $(\cH_0)_{Z_1}$ to $(\cH_1)_{Z_1}$. We have $\bH^{(0)}=M_0\times Z_0\coprod M_1\times Z_1$.

In the same way as above, for every locally closed subset $Y\subset Z_0$ we denote by $\bH_Y$ the restriction of $\bH$ to its saturated subset $M_0\times Y\cup M_1\times (Y\cap Z_1)$.

\begin{remarks}\label{remark:gluing}
\begin{enumerate}
\item It is worth noticing that the groupoid $\bH$ only depends on $\pih :G_0\to G_1$, the (proper) actions of $G_i$ on $M_i$ and the submersion $q$. Also, the restriction $\bH_{\{1/2\}\times [0,1/2]}$ is nothing else than $(M_0\times _{p_0}M_0)_{ad}\rtimes G_0$ (restricted to $[0,1/2]$). It does not depend on $G_1,M_1,q$.
\item Note that $\cH_{0,0}$ is isomorphic to the direct sum of vector bundles $\ker dq\oplus  q^*(\ker dp_1)$.
\end{enumerate}
\end{remarks}

\subsubsection{Baum-Connes map for a mapping cone}\label{section:BCmap}

Set $F_0=[0,1)\times \{0\}\cup \{0\}\times [0,1/2]$.

Note that, since the action of $G_i$ on $M_i$ is proper, the groupoid $\bH_{F_0}=(\ker dp_0)\rtimes G_0\times [0,1/2]\cup ((\ker dp_1)\rtimes G_1)_{q'}^{q'}\times (0,1)$ is amenable and we have a semi-split exact sequence $$0\to C^*(\bH_{Z_0\setminus F_0})\longrightarrow C^*(\bH)\overset {\sigma_0}\longrightarrow C^*(\bH_{F_0})\to 0.$$

\begin{prop}
The homomorphism $\sigma_0$ is invertible in $KK$-theory.
\end{prop}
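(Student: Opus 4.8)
The plan is to show that the kernel of $\sigma_0$, namely $C^*(\bH_{Z_0\setminus F_0})$, is $KK$-contractible; since the displayed extension is semi-split (this is exactly what the amenability of $\bH_{F_0}$, hence the nuclearity of $C^*(\bH_{F_0})$, buys us), a semi-split extension with $KK$-contractible kernel has a quotient map which is a $KK$-equivalence, and this is the assertion. One should note that $E$-contractibility of the kernel would already suffice to make $\sigma_0$ invertible in $E$-theory via the six-term $E$-theory sequence (no splitting needed there); the semi-splitness is used only to upgrade the conclusion to $KK$.

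To understand the kernel, I would cut the parameter region $Z_0\setminus F_0$ along the open set $Z_1$ and its closed complement $Q$. Since $Z_0\setminus F_0=\{u>0,\ v>0\}$ lies entirely inside the locus on which both deformation-to-the-normal-cone parameters entering $\cH_0$ and $\cH_1$ are nonzero, the groupoid $\bH$ restricts over $Q\setminus F_0=\{0<u\le v\le 1/2\}$ to the constant family $(G_0)_{p_0}^{p_0}\times(Q\setminus F_0)$ and over $Z_1\setminus F_0=\{0<v<u<1,\ v\le 1/2\}$ to $(G_1)_{p_1}^{p_1}\times(Z_1\setminus F_0)$. Hence $C^*(\bH_{Q\setminus F_0})\cong C^*((G_0)_{p_0}^{p_0})\otimes C_0(Q\setminus F_0)$ and $C^*(\bH_{Z_1\setminus F_0})\cong C^*((G_1)_{p_1}^{p_1})\otimes C_0(Z_1\setminus F_0)$; and since $(u,v)\mapsto(v,u/v)$ and $(u,v)\mapsto\big(v,\tfrac{u-v}{1-v}\big)$ are homeomorphisms of $Q\setminus F_0$ onto $(0,1/2]\times(0,1]$ and of $Z_1\setminus F_0$ onto $(0,1/2]\times(0,1)$, both $C_0(Q\setminus F_0)$ and $C_0(Z_1\setminus F_0)$ contain a cone tensor factor $C_0((0,1/2])$ and are therefore contractible. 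So the ideal and the quotient of
$$0\to C^*(\bH_{Z_1\setminus F_0})\to C^*(\bH_{Z_0\setminus F_0})\to C^*(\bH_{Q\setminus F_0})\to 0$$
are both contractible, in particular $KK$-contractible.

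The remaining, and only delicate, point is to deduce $KK$-contractibility of $C^*(\bH_{Z_0\setminus F_0})$ from this: as $C^*(G_0)$ and $C^*(G_1)$ need not be nuclear, semi-splitness of the last sequence is not automatic and must be exhibited by hand. I would write down an explicit completely positive contractive section, sending a family of $C^*((G_0)_{p_0}^{p_0})$-valued sections over $\{u\le v\}$ to its extension over all of $Z_0\setminus F_0$ whose value over $\{u>v\}$ is the $\pi$-image of its boundary value at $u=v$, multiplied by a fixed cutoff function of $u$ supported away from $u=1$; this is completely positive because $\pi$ induces a genuine $*$-homomorphism $C^*((G_0)_{p_0}^{p_0})\to C^*((G_1)_{p_1}^{p_1})$ and multiplication by the cutoff is a completely positive central operation. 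Granting semi-splitness, the quotient map $C^*(\bH_{Z_0\setminus F_0})\to C^*(\bH_{Q\setminus F_0})$ is a $KK$-equivalence onto a $KK$-contractible algebra, so $C^*(\bH_{Z_0\setminus F_0})$ is $KK$-contractible; feeding this into the (semi-split) defining sequence of $\sigma_0$ then shows that $\sigma_0$ is a $KK$-equivalence. The main obstacle is thus precisely this semi-splitness bookkeeping; everything else — identifying the two pieces as constant families over contractible bases and running the $E$- and $KK$-exact sequences — is routine.
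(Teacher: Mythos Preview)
Your proposal is correct and follows the same route as the paper: decompose the kernel $C^*(\bH_{Z_0\setminus F_0})$ along $Q$ and $Z_1$, observe that over each piece the groupoid is a constant family over a base with a cone factor, and conclude. One small correction: over $Z_1\setminus F_0$ the constant fibre is $(G_1)_{p'}^{p'}$ with $p'=p_1\circ q':M_0\coprod M_1\to G_1^{(0)}$, not $(G_1)_{p_1}^{p_1}$; since the two are Morita equivalent and you only use contractibility of the tensor factor, this does not affect your argument.

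The one genuine divergence is in the final step. You rightly flag that semi-splitness of the inner sequence is not free (neither $C^*(G_0)$ nor $C^*(G_1)$ is assumed nuclear) and build a completely positive section by pushing boundary values through $\pi$ and cutting off near $u=1$. The paper instead sidesteps this entirely by noting, in a parenthetical, that $C^*(\bH_{Z_0\setminus F_0})$ is \emph{actually} contractible: after the reparametrisation $(u,v)\mapsto (v,\phi_v(u))$ that for each $v$ sends $(0,v]$ affinely to $(0,1/2]$ and $(v,1)$ to $(1/2,1)$, the $v$-slices become independent of $v$ and the algebra acquires an honest tensor factor $C_0((0,1/2])$. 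This is shorter and avoids the bookkeeping you describe. Your argument is nevertheless valid and makes explicit a point the paper leaves to the reader.
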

\begin{proof}
We have a semi-split exact sequence $$0\to C^*(\bH_{Z_1\setminus F_0})\longrightarrow C^*(\bH_{Z_0\setminus F_0}) \longrightarrow C^*(\bH_{Q\setminus F_0})\to 0.$$
Note that the groupoid $\bH$ is constant over the sets $Z_1\setminus F_0$ and $Q\setminus F_0$: \begin{itemize}
\item $\bH_{(u,v)}=(G_1)^{p_1\circ q'}_{p_1\circ q'}$ for $(u,v)\in Z_1\setminus F_0$
\item $\bH_{(u,v)}=(G_0)_{p_0}^{p_0}$ for $(u,v)\in Q\setminus F_0$.
\end{itemize}

The sets $Z_1\setminus F_0=\{(u,v);\ 0<v<u< 1 \ \hbox{and}\ v\le 1/2\}$ and $Q\setminus F_0=\{(u,v);\ 0<u\le v\le 1/2\}$ are contractible (more precisely, their one point compactification contracts to this point) and it follows that the $C^*$-algebras $C^*(\bH_{Z_1\setminus F_0})$ and $C^*(\bH_{Q\setminus F_0})$ are contractible. It follows that $C^*(\bH_{Z_0\setminus F_0})$ is $K$-contractible (it is actually contractible). We deduce that $[\sigma_0]$ is a $KK$-equivalence.
\end{proof}

Set also $F_1=\{1/2\}\times [1/2,1)$. One sees that $\bH_{F_1}$ is isomorphic to the groupoid $\cC_{\pih}=G_0\times \{0\}\cup G_1\times (0,1)$ pulled back by $q'':M_0\times [0,1)\coprod M_1\times (0,1)\to G_1^{(0)}\times [0,1)$ (recall that $G_0^{(0)}$ is an open subset of $G_1^{(0)}$).

\begin{cor}
The algebra $C^*(\bH_{F_1})$ is canonically Morita equivalent to the mapping cone of $h_{C^*}:C^*(G_0)\to C^*(G_1)$. 
\end{cor}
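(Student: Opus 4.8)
The plan is to unwind the definition of $\bH_{F_1}$ and identify it explicitly with a pullback of the deformation groupoid $\cC_{\pih}$, then apply the double-deformation analysis already set up for $\bH$. First I would observe that $F_1=\{1/2\}\times[1/2,1)$ lies entirely in the region $Z_1=Z_0\setminus Q$ where $\bH$ agrees with $(\cH_1)_{Z_1}$, so $\bH_{F_1}=(\cH_1)_{F_1}$. Recall $\cH_1=\DNC\big((G_1)_{p'}^{p'},(M_1\rtimes G_1)_{q'}^{q'}\big)\times\R^*$, with the $\R^*$-coordinate being the first coordinate $u$; restricting to $u=1/2$ kills that factor and leaves the plain deformation to the normal cone $\DNC\big((G_1)_{p'}^{p'},(M_1\rtimes G_1)_{q'}^{q'}\big)$, and then restricting the DNC-parameter $v$ to $[1/2,1)$ (after the affine reparametrization $[1/2,1)\to[0,1)$) gives exactly the adiabatic-type groupoid $\DNC_{[0,1)}\big((G_1)_{p'}^{p'},(M_1\rtimes G_1)_{q'}^{q'}\big)$.

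Next I would use the structural remark already recorded in the text: $(M_1'\rtimes G_0)_q^q$ is the fibered product $(G_0)_{p_0}^{p_0}\times_{(G_1)_{p_0}^{p_0}}(M_1\rtimes G_1)_q^q$, and more to the point for the $F_1$-fibre, the semidirect product $M_1\rtimes G_1$ pulled back along $q':M_0\coprod M_1\to M_1$ is, by the very definition of the pullback groupoid and of the semidirect product, the pullback of $G_1$ along $p\circ q'$ restricted to the diagonal-type subset. Concretely, $\DNC$ of a pullback groupoid is the pullback of the $\DNC$ (functoriality of $\DNC$ under submersions, stated in the excerpt), so $\DNC\big((G_1)_{p'}^{p'},(M_1\rtimes G_1)_{q'}^{q'}\big)$ is the pullback, along $q'':M_0\times[0,1)\coprod M_1\times(0,1)\to G_1^{(0)}\times[0,1)$, of $\DNC\big(G_1,M_1\rtimes G_1\big)$ — but $M_1\rtimes G_1$ sits inside $G_1$ (over the open set of objects coming from $G_0^{(0)}$) precisely as the image of the morphism $\widehat\pih$, and its ``normal cone degeneration'' inside $G_1$ is the groupoid $\cC_{\pih}=G_0\times\{0\}\cup G_1\times(0,1)$. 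Here I would invoke that $\pih:G_0\to G_1$ is a submersion which is an open inclusion on objects, so the normal bundle of $G_0\hookrightarrow G_1$ along the units is trivial in the relevant sense and the DNC collapses to the mapping-cone groupoid $\cC_{\pih}$; this is the standard identification of a deformation to the normal cone of a submersive groupoid morphism with its ``mapping cone'' deformation groupoid.

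Having identified $\bH_{F_1}\cong (\cC_{\pih})$ pulled back along $q''$, the last step is the $C^*$-algebra statement. Since $q''$ is a surjective submersion onto $G_1^{(0)}\times[0,1)$ (its range meets every orbit), the pullback groupoid is Morita equivalent to $\cC_{\pih}$ itself, hence $C^*(\bH_{F_1})$ is Morita equivalent to $C^*(\cC_{\pih})$. Finally, $\cC_{\pih}=G_0\times\{0\}\cup G_1\times(0,1)$ is by construction the deformation groupoid whose $C^*$-algebra fits in $0\to C^*(G_1)\otimes C_0(0,1)\to C^*(\cC_{\pih})\to C^*(G_0)\to 0$ with connecting data the morphism $h_{C^*}$; i.e. $C^*(\cC_{\pih})$ \emph{is} the mapping cone $\cC_{h_{C^*}}$ of $h_{C^*}:C^*(G_0)\to C^*(G_1)$, by the same argument as in the height-one discussion (and as in Remarks \ref{rem:telescope=cone}). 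Chaining the two identifications gives the corollary. The main obstacle I anticipate is bookkeeping: keeping straight which of the two DNC-parameters is being frozen and which is playing the role of the cone parameter, and checking that the affine reparametrizations of $[1/2,1)$ and of the $u$- and $v$-intervals are compatible with the atlas structure of $\bH$, so that the identification $\bH_{F_1}\cong q''^*\cC_{\pih}$ is genuinely an isomorphism of longitudinally smooth groupoids and not merely of the underlying sets; the Morita-equivalence and mapping-cone steps are then routine.
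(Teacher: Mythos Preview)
Your argument has a genuine gap stemming from a misreading of where $F_1$ sits and of which parameter produces the $G_0/G_1$ degeneration. First, note that $Z_0=[0,1)\times[0,1/2]$, so the set $\{1/2\}\times[1/2,1)$ is not even contained in $Z_0$; the intended $F_1$ is $[1/2,1)\times\{1/2\}$ (the coordinates are swapped in the displayed formula). With the correct $F_1$, the point $(1/2,1/2)$ lies in $Q$ (since $u\le v$ there) while the rest $(1/2,1)\times\{1/2\}$ lies in $Z_1$. Hence $F_1$ is \emph{not} entirely in $Z_1$: the single point $(1/2,1/2)$ sits in $Q$, and that point is precisely where the $G_0$-fibre appears.

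This matters because the mapping-cone structure does not come from the $\DNC$ deformation inside $\cH_1$ at all. Along $F_1$ the second coordinate is constant equal to $1/2\ne 0$, so both $\DNC$ parameters are nonzero everywhere on $F_1$ and the deformation-to-the-normal-cone machinery contributes nothing: one simply has $(\cH_0)_{(1/2,1/2)}=(G_0)_{p_0}^{p_0}$ and $(\cH_1)_{(u,1/2)}=(G_1)_{p'}^{p'}$ for $u\in(1/2,1)$ (these are exactly the identifications used in the proof of the preceding proposition). The passage from $G_0$ to $G_1$ is produced by the gluing morphism $\pi:(\cH_0)_{Z_1}\to\cH_1$ as one crosses from $Q$ into $Z_1$, not by any normal-bundle degeneration. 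In particular, your claim that $\DNC\big((G_1)_{p'}^{p'},(M_1\rtimes G_1)_{q'}^{q'}\big)$ is a pullback of $\cC_{\pih}$ is incorrect: the normal bundle of $(M_1\rtimes G_1)_{q'}^{q'}$ inside $(G_1)_{p'}^{p'}$ is (a pullback of) $\ker dp_1$ with its additive groupoid structure, and has nothing to do with $G_0$.

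Once one has the correct identification $\bH_{F_1}\cong(\cC_{\pih})_{q''}^{q''}$, your last paragraph is fine: $q''$ is a surjective submersion meeting every orbit, so the pullback groupoid is Morita equivalent to $\cC_{\pih}$, and $C^*(\cC_{\pih})$ is by construction the mapping cone of $h_{C^*}:C^*(G_0)\to C^*(G_1)$.
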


Denote by $\cE$ the Morita $C^*(\bH_{F_1}),C^*(\cC_{h})$ bimodule and $[\cE]$ its $KK$-class. Let $[\sigma_1]:C^*(\bH)\to C^*(\bH_{F_1})$ be the evaluation. 

\begin{definition} 
Assume further that the manifolds $M_i$ are classifying spaces for the proper actions of $G_i$. With the notation above, the topological $K$-theory of the groupoid $\cC_{\pih}$ is $K_*(C^*(\bH_{F_0}))$ and the Baum-Connes morphism is the composition $[\sigma_0]^{-1}\otimes [\sigma_1]\otimes [\cE]$.
\end{definition}

\subsubsection{Justifying why this is a Baum-Connes map}

Let us explain why this is a ``good'' definition.

First of all, for $v\in [0,1/2]$, the $K$-theory of the $C^*$-algebra $C^*(\bH_{(0,v)})=C^*(\ker p_0\rtimes G_0)=C_0((\ker p_0)^*)\rtimes G_0$ is the left hand side for $G_0$.

Also, for $u\in (0,1)$, the   $C^*$-algebra $C^*(\bH_{(u,0)})=C^*((\ker p_1\rtimes G_1)_{q'}^{q'})$ is Morita equivalent to $C^*(\ker p_1\rtimes G_1)=C_0((\ker p_1)^*)\rtimes G_1$ whose $K$-theory is the left hand side for $G_1$. 

We may then write a diagram:

$$\xymatrix{0\ar[r] &C^*(\bH_{Z_1\cap F_0})\ar[r] &C^*(\bH_{F_0})\ar[r]&C^*(\bH_{Q\cap F_0})\ar[r]&0\\
0\ar[r] &C^*(\bH_{Z_1})\ar[r]\ar[u]^{\sigma_{1,0}}\ar[d]_{\sigma_{1,1}} &C^*(\bH)\ar[r]\ar[u]^{\sigma_{0}}\ar[d]_{\sigma_{1}}&C^*(\bH_{Q})\ar[r]\ar[u]^{\sigma_{0,0}}\ar[d]_{\sigma_{0,1}}&0\\
0\ar[r] &C^*(\bH_{Z_1\cap F_1})\ar[r]\ar@{-}[d]^{\cE_1} &C^*(\bH_{F_1})\ar[r]\ar@{-}[d]^{\cE}&C^*(\bH_{Q\cap F_1})\ar[r]\ar@{-}[d]^{\cE_0}&0\\
0\ar[r] &C^*(G_1)(0,1)\ar[r]&\cC_{h_{C^*}}\ar[r] &C^*(G_0)\ar[r]&0
}$$
In this diagram all sequences are semi split, the morphisms $\sigma_0,\sigma_{i,0}$ are $KK$-equivalences and the compositions $[\sigma_{i,0}]^{-1}\otimes [\sigma_{i,1}]\otimes [\cE_i]$ are indeed the Baum-Connes maps for $G_1\times (0,1)$ and $G_0$.

It follows also that the class in $KK^1(C^*(\bH_{Q\cap F_0}),C^*(\bH_{Z_1\cap F_0}))$  for the first sequence corresponds to the class of $[h_{C^*}]\in KK(C^*(G_0),C^*(G_1))$.

From the discussion in section \ref{subsec:BCabstract}, it follows that the $K$-theory of $C^*(\bH_{F_0})$ and the morphism is indeed the right one, and that the composition $[\sigma_0]^{-1}\otimes [\sigma_1]\otimes [\cE]$ is indeed a Baum-Connes map.

\begin{remark}
The groupoid $\bH_{F_0}$ is a semi-direct product $\Lambda\rtimes \cC_{\pih}$, where $\Lambda $ is a groupoid obtained by gluing $\DNC_{[0,1)}(M_0,(\ker dp_1)_{q'}^{q'})$ with $\ker dp_0\times [0,1/2]$.

One may give a generalized notion of proper algebras on a longitudinally smooth groupoid $G$ by saying that $G^{(0)}$ is an increasing union $\bigcup \Omega_k$ of saturated open subsets such that the restriction of $G$ to $\Omega_k\setminus \Omega_{k-1}$ is Hausdorff. We may say that an action of $G$ on an algebra $A$ is proper if its restriction to each $\Omega_k\setminus \Omega_{k-1}$ is proper.

In this generalized sense, the $\cC_{\pih}$-algebra  $C^*(\Lambda)$ is a proper  $\cC_{\pih}$-algebra. Its restriction to $ {Q\cap F_0}$ is indeed a  proper $G_0$ algebra and  its restriction to ${Z_1\cap F_0}$ is a proper $G_1\times (0,1)$-algebra. 

It may be interesting to look for a way to say that the $C^*(\DNC_{[0,1)}(M_0,(M_1)_{q'}^{q'}))$ is somewhat a universal proper algebra.
\end{remark}

\subsection{Baum-Connes map for the telescopic algebra}\label{subsec:BCtelescope}

Since a mapping telescope is a mapping cylinder which, in turn, is a mapping cone (\cf remark \ref{rem:telescope=cone}) we can just proceed and construct the left hand side for the telescopic algebra - and therefore for the foliation one.

We are given a nicely decomposable foliation $(M,\cF)$, a decomposition given by an increasing sequence $\Omega_k$ of saturated sets - we put $Y_k=\Omega _k\setminus \Omega_0$, a sequence of Lie groupoids $\cG_k\gpd W_k\subset \Omega _k$ such that $Y_k\subset W_k$ and $W_k\cap \Omega_{k-1} \subset W_{k-1}$; we put $\cG'_k=(\cG_k)_{|W_k\cap \Omega_{k-1}}$ and assume that we have a groupoid morphism which is a submersion $\pi_k:\cG'_k\to \cG_{k-1}$.

We further assume that we have submersions of manifolds $p_k:M_k\to W_k$ which are classifying spaces for proper actions of $\cG_k$. For $k\ge 1$, the restriction $p_k^{-1}(\Omega_{k-1})$ of $M_k$ is a classifying space for $\cG'_k$ but we may need to modify it: we choose a classifying space given by a submersion $p'_k:M'_k\to \Omega_{k-1} \subset W_{k-1}$ in such a way that the maps $q_k:M_k'\to M_{k-1}$ and $\hat q_k:M_k'\to M_{k}$ are submersions.

We then construct the classifying groupoids \begin{itemize}
\item $\bH_k$ associated to the morphism $\pi_k:\cG'_k\to \cG_{k-1}$ and the submersion $q_k:M_k'\to M_{k-1}$ of classifying spaces;
\item  $\widehat \bH_k$ associated to the morphism $j_k:\cG'_k\to \cG_{k}$ and the submersion $\hat q_k:M_k'\to M_{k}$ of classifying spaces
\end{itemize}
We then glue the groupoids $\bH_k$ and $\widehat \bH_k$ in their common part $(\bH_k)_{\{1/2\}\times [0,1/2]}=(\widehat \bH_k)_{\{1/2\}\times [0,1/2]}$ (\cf remark \ref{remark:gluing}) and obtain a groupoid $\widetilde \bH_k$.

For a locally closed part $Y$ of $Z_0=[0,1)]\times [0,1/2]$ we $(\widetilde \bH_k)_Y=(\bH_k)_Y\cup (\widehat \bH_k)_Y $.

Recall that $Q=\{(u,v);\ 0\le u\le v\le 1/2\}$ and $Z_1=Z_0\setminus Q$.

We define diffeomorphisms $\vartheta_k :Z_1\to [0,1]\times (k-1,k)$ by setting $\vartheta (u,v)=(2v,k-\frac{u-v}{1-v})$ and $\hat \vartheta_k :Z_1\to [0,1]\times (k,k+1)$ by setting $\vartheta (u,v)=(2v,k+\frac{u-v}{1-v})$.

Thanks to this diffeomorphism, we obtain identifications of \begin{itemize}
\item $\Theta_k:(\bH_k)_{Z_1}\overset{\sim}\longrightarrow \Big(\DNC((\cG_{k-1})_{p_{k-1}}^{p_{k-1}},M_{k-1}\rtimes\cG_{k-1})_{[0,1]} \Big)_{q_k'}^{q_k'}\times (k-1,k)$ where $q'_k:M_{k-1}\coprod M'_k\to M_{k-1}$ is the identity on $M_{k-1}$ and $q_k$ on $M'_k$;
\item $\widehat \Theta_k:(\widehat \bH_k)_{Z_1}\overset{\sim}\longrightarrow \Big(\DNC((\cG_{k})_{p_{k}}^{p_{k}},M_{k}\rtimes\cG_{k})_{[0,1]} \Big)_{\hat q_k'}^{\hat q_k'}\times (k,k+1)$  where $\hat q'_k:M_{k}\coprod M'_k\to M_{k}$ is the identity on $M_k$ and $\hat q_k$ on $M'_k$.
\end{itemize}

Define $q''_k:M_k\coprod M'_k\coprod M'_{k+1}\to M_k$ to be the map coinciding with the identity over $M_k$, $\hat q_k$ on $M'_k$ and $q_{k+1}$ on $M'_{k+1}$ - with the convention $M'_0=\emptyset$ and, if $n\ne +\infty$, $M'_{n+1}=\emptyset$.

\begin{definition} We define the \emph{adiabatic telescopic groupoid} $\bG$ to be the union  $\bigcup _{k=1}^n(\widetilde \bH_k)_Q\times \{k\}$ with $\bigcup_{k=0}^n\Big(\DNC((\cG_{k})_{p_{k}}^{p_{k}},M_{k}\rtimes\cG_{k})_{[0,1]} \Big)_{q_k''}^{q_k''}\times (k,k+1)$. The gluing is obtained by mapping $\bH_k\to \bG$:  
\begin{itemize}
\item we map  $(\widetilde \bH_k)_Q$ to $\bG$ by the map  $\gamma \mapsto (\gamma,k)\in \bG$
\item Using $\Theta_{k+1}$ we map $(\bH_{k+1})_{Z_1}$ to $\bigcup_{k=0}^n\Big(\DNC((\cG_{k})_{p_{k}}^{p_{k}},M_{k}\rtimes\cG_{k})_{[0,1]} \Big)_{q_k'}^{q_k'}\times (k,k+1)$ which  is a subset of $\bigcup_{k=0}^n\Big(\DNC((\cG_{k})_{p_{k}}^{p_{k}},M_{k}\rtimes\cG_{k})_{[0,1]} \Big)_{q_k''}^{q_k''}\times (k,k+1) \subset \bG$;
\item Using $\widehat\Theta_k$ we map $(\widehat\bH_k)_{Z_1}$ to $((M_{k}\times _{p_{k}}M_{k})_{\overline{ad}}\rtimes \cG_{k})_{\hat q_k'}^{\hat q_k'}\times (k,k+1)$ which  is a subset of\\ $((M_{k}\times _{p_{k}}M_{k})_{\overline{ad}}\rtimes \cG_{k})_{q''_{k}}^{q''_{k}}\times (k,k+1)\subset \bG$.
\end{itemize}
We define the obvious map $\chi:\bG^{(0)}\to (0,n+1)$ (using the convention $+\infty+1=+\infty$ of course). Thanks to $\chi$, the (full) $C^*$-algebra $C^*(\bG)$ is a $C_0(0,n+1)$ algebra (with the convention $+\infty +1=+\infty$).
\end{definition}

Define a map $\xi :Z\to [0,1]$ by setting $\xi(u,v)=2\min(u,v)$ and let $\hat \xi :\bG^{(0)}\to [0,1]$ be defined as the composition $\bG^{(0)}\to Q\overset{\xi}\rightarrow [0,1]$ on $(\widetilde \bH_k)_Q\times \{k\}$ and letting $\hat \xi$ to be the parameter in the adiabatic deformation $(M_k\times _{p_k}M_k)_{\overline{ad}}$ on $\bigcup_{k=0}^n((M_k\times _{p_k}M_k)_{\overline{ad}}\rtimes \cG_k)_{q_k''}^{q_k''}\times (k,k+1)$.

We define then the subgroupoids $\bG_0$ and $\bG_1$ of $\bG$, restrictions of $\bG$ to the closed saturated set $\hat \xi^{-1}(\{i\})$.

We then   have:

\begin{prop}
\begin{enumerate}
\item The algebra $C^*(\bG_0)$ is nuclear.
\item The kernel of the evaluation $\rho_0:C^*(\bG)\to C^*(\bG_0)$ is $K$-contractible.
\item The algebra $C^*(\bG_1)$ is Morita equivalent to the telescopic algebra
\end{enumerate}
\begin{proof}
 \begin{enumerate}
\item In fact $C^*(\bG_0)$ sits in an exact sequence $$0\to \bigoplus_{k=0}^nC^*(((\ker p_k)^*\rtimes \cG_k)_{q_k'}^{q_k'}\times (0,1))\to C^*(\bG_0)\to \bigoplus_{k=1}^nC^*((\ker p'_k)^*\rtimes \cG'_k\times [0,1])\to 0$$ and the Lie groupoids $(\ker p_k)^*\rtimes \cG_k)_{q_k'}^{q_k'}$ and $(\ker p'_k)^*\rtimes \cG'_k$ are proper. It follows that $C^*(\bG_0)$ is in fact a type $I$ algebra.
\item We have a semi-split exact sequence $$ 0\to C_0((0,1]\times (0,1))\otimes B  \to \ker \rho_0\to C_0(Q\setminus F_0)\otimes B'\to 0 \to 0$$
where $B=\bigoplus_{k=0}^n C^*(((M_{k}\times _{p_{k}}M_{k})\rtimes \cG_{k})_{q''_{k}}^{q''_{k}}) $ and $B'=\bigoplus_{k=1}^nC^*((M'_{k}\times _{p'_{k}}M'_{k})\rtimes \cG'_{k})$. The algebras $C_0((0,1]\times (0,1))$ and $C_0(Q\setminus F_0)$ are contractible.
\item Actually the groupoid $\bG_1$ is Morita equivalent to the telescopic groupoid.
\qedhere
\end{enumerate}
\end{proof}
\end{prop}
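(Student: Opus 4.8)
The plan is to prove the three assertions by restricting $\bG$ to the strata of its natural decomposition and identifying each stratum with a pullback of a simpler groupoid. For (1) and for the contractibility computation in (2) that simpler groupoid will be a proper Lie groupoid (so its $C^*$-algebra is type $I$), while for (3) it will be one of the $\cG_k$ or $\cG'_k$ themselves. Throughout I would use that pullback along a surjective submersion preserves properness and yields a Morita equivalent groupoid, and that the adiabatic deformation $(M_k\times_{p_k}M_k)_{\overline{ad}}$ has the pair groupoid of the $p_k$-fibres at one end and the vector bundle $\ker dp_k$ at the other.

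For (1): restrict to $\bG_0=\bG|_{\hat\xi^{-1}(\{0\})}$. Over each telescope interval $(k,k+1)$ the value $\hat\xi=0$ is the algebroid end of the adiabatic deformation, so there $\bG_0$ is the constant family $((\ker dp_k)\rtimes\cG_k)_{q''_k}^{q''_k}$, whose $C^*$-algebra, after Fourier transform along the vector bundle $\ker dp_k$, is $C^*\big(((\ker dp_k)^*\rtimes\cG_k)_{q''_k}^{q''_k}\big)$; at the integer points $k$ one is left, by Remark \ref{remark:gluing}, with $(\ker dp'_k)^*\rtimes\cG'_k$ times an interval. This gives the extension
$$0\to\bigoplus_{k=0}^n C^*\big(((\ker dp_k)^*\rtimes\cG_k)_{q''_k}^{q''_k}\big)\otimes C_0((0,1))\to C^*(\bG_0)\to\bigoplus_{k\geq 1}C^*\big((\ker dp'_k)^*\rtimes\cG'_k\big)\otimes C([0,1])\to0 .$$
Since $M_k$ (resp.\ $M'_k$), hence the vector bundle $(\ker dp_k)^*$ (resp.\ $(\ker dp'_k)^*$), is a classifying space for proper actions of $\cG_k$ (resp.\ $\cG'_k$), every groupoid occurring here is proper, so each stratum $C^*$-algebra is type $I$; as type $I$ $C^*$-algebras are stable under $c_0$-sums, under tensoring with commutative algebras and under extensions, $C^*(\bG_0)$ is type $I$, a fortiori nuclear.

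For (2): here $\ker\rho_0=C^*(\bG|_{\hat\xi>0})$, and I would split $\bG|_{\hat\xi>0}$ into the part lying over the telescope intervals (where $\hat\xi\in(0,1]$ is the pair-groupoid end, so $\bG$ there is the \emph{constant} family $((M_k\times_{p_k}M_k)\rtimes\cG_k)_{q''_k}^{q''_k}$) and the part lying over $Q\setminus F_0$ (where $\bG$ is the constant family $(M'_k\times_{p'_k}M'_k)\rtimes\cG'_k$). This produces the semi-split extension
$$0\to C_0\big((0,1]\times(0,1)\big)\otimes B\to\ker\rho_0\to C_0(Q\setminus F_0)\otimes B'\to0 ,$$
with $B=\bigoplus_k C^*\big(((M_k\times_{p_k}M_k)\rtimes\cG_k)_{q''_k}^{q''_k}\big)$ and $B'=\bigoplus_k C^*\big((M'_k\times_{p'_k}M'_k)\rtimes\cG'_k\big)$. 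Now $(0,1]\times(0,1)$ and, via $(u,v)\mapsto(v,u/v)$, $Q\setminus F_0\cong(0,1/2]\times(0,1]$ are products with a half-open interval, so $C_0((0,1]\times(0,1))$ and $C_0(Q\setminus F_0)$ are cone algebras, hence contractible; the two tensor products are then contractible, and the six-term $K$-theory sequence forces $K_*(\ker\rho_0)=0$.

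For (3): I would show that $\bG_1=\bG|_{\hat\xi^{-1}(\{1\})}$ is Morita equivalent to the telescopic groupoid $G$ of \S\ref{subsec:telescope}. Over $(k,k+1)$, at $\hat\xi=1$ the adiabatic deformation is at its pair-groupoid end, so there $\bG_1$ is $((M_k\times_{p_k}M_k)\rtimes\cG_k)_{q''_k}^{q''_k}\cong(\cG_k)_{p_k\circ q''_k}^{p_k\circ q''_k}$, the pullback of $\cG_k$ along the surjective submersion $p_k\circ q''_k$, hence Morita equivalent to $\cG_k$; at the integer point $k$, $\hat\xi=1$ meets the $Q$-slice of $\widetilde\bH_k$ only at $(1/2,1/2)$, where by Remark \ref{remark:gluing} one gets $(M'_k\times_{p'_k}M'_k)\rtimes\cG'_k\cong(\cG'_k)_{p'_k}^{p'_k}$, Morita equivalent to $\cG'_k$. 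Thus, stratum by stratum, $\bG_1$ is a pullback of the corresponding stratum of $G$; patching the fibrewise linking bimodules coming from the submersions $p_k$, $p'_k$, $q''_k$ compatibly with the gluing maps $\Theta_k$, $\widehat\Theta_k$ and with the morphism $\pi$ used to build the $\bH_k$ — equivalently, checking directly that $\bG_1$ is the pullback of $G$ along a single surjective submersion of unit spaces — produces a global equivalence bimodule, so $C^*(\bG_1)$ is Morita equivalent to $C^*(G)=\cT(q,j)$. The step I expect to be the real work is exactly this last compatibility check in (3): one must make sure the local Morita equivalences at the intervals and at the integer junctions are genuinely compatible with the $\DNC$/adiabatic gluing data, with no orbit lost. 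Parts (1) and (2) are essentially formal once the two stratifications are written down.
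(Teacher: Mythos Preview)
Your proposal is correct and follows essentially the same route as the paper: the same stratification of $\bG_0$ into proper pieces for (a), the same semi-split extension with contractible parameter spaces $C_0((0,1]\times(0,1))$ and $C_0(Q\setminus F_0)$ for (b), and the stratum-by-stratum pullback description of $\bG_1$ for (c). You have in fact filled in more detail than the paper's own proof (the explicit homeomorphism $Q\setminus F_0\cong(0,1/2]\times(0,1]$, and the honest acknowledgment that the global Morita equivalence in (c) requires checking compatibility of the local linking bimodules with the gluing data), whereas the paper simply asserts these points; note also that over the intervals $(k,k+1)$ the pullback is by $q''_k$ as you wrote, which is what the construction of $\bG$ actually uses.
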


\begin{definition}\label{dfn:BCmaptelescMF}
Let  $(M,\cF)$ be a nicely decomposable foliation. Assume that the classifying spaces of all the groupoids $\cG_k\gpd W_k$ involved in this decomposition are manifolds. With the above construction:\begin{itemize} 
\item we define the left hand side, \ie the topological $K$-theory (of this decomposition) to be the $K$-theory of $C^*(\bG_0)$;
\item we define the Baum-Connes map for the telescope to be the composition $[\rho_0]^{-1}\otimes [\rho_1]\otimes [\bE]$;
\item we define the Baum-Connes map for $(M,\cF)$ to be the Baum-Connes map for the telescope composed with the isomorphism $K_*(\cT)\to K_{*+1}(C^*(M,\cF))$.
\end{itemize}
\end{definition}

Let $\rho_1:C^*(\bG)\to C^*(\bG_1)$ be evaluation. The kernel of $\rho_1$ is a $C_0(0,n+1)$ algebra. It follows from the inductive limit construction that if $(\ker \rho_1)_{(k,k+1)}$ and $(\ker \rho_1)_k$ are $E$-contractible for all $k$, then so is $\ker \rho_1$. We thus obtain:

\begin{thm}\label{mainthm}
Let  $(M,\cF)$ be a nicely decomposable foliation such that the classifying spaces of all the groupoids $\cG_k\gpd W_k$ involved in this decomposition are manifolds. if the \emph{full} Baum-Connes conjecture holds for all of them, then the \emph{full} Baum-Connes map of definition \ref{dfn:BCmaptelescMF} is an isomorphism.
\end{thm}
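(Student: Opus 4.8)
The plan is to reduce the statement to the three assertions of the preceding Proposition about the groupoids $\bG$, $\bG_0$, $\bG_1$, together with the identification of the various evaluation homomorphisms with Baum--Connes maps on the strata. First I would recall that, by Theorem~\ref{thm:telescope}, $C^*(M,\cF)$ is $E$-equivalent (up to a degree shift) to the telescopic algebra $\cT(q,j)$, so it suffices to show that the Baum--Connes map for the telescope of Definition~\ref{dfn:BCmaptelescMF}, namely $[\rho_0]^{-1}\otimes[\rho_1]\otimes[\bE]$, is an isomorphism in $K$-theory. By part~(2) of the Proposition the evaluation $\rho_0$ is a $K$-equivalence, and by part~(3) the class $[\bE]$ is a Morita equivalence; hence the whole composition is a $K$-isomorphism if and only if $[\rho_1]:C^*(\bG)\to C^*(\bG_1)$ induces an isomorphism on $K$-theory.

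Next I would analyze $\ker\rho_1$. As noted just before the theorem, $\ker\rho_1$ is a $C_0(0,n+1)$-algebra, and by the inductive-limit principle for $E$-contractibility (the Proposition on telescopes in \S\ref{sec:higherheight}, part~(3), applied fiberwise) it is enough to prove that each fiber-type piece $(\ker\rho_1)_{(k,k+1)}$ and each $(\ker\rho_1)_k$ is $E$-contractible. On the open interval $(k,k+1)$ the groupoid $\bG$ restricts to the adiabatic-type groupoid $\big(\DNC((\cG_k)_{p_k}^{p_k},M_k\rtimes\cG_k)_{[0,1]}\big)_{q_k''}^{q_k''}$, whose fiber over the endpoint $\hat\xi=1$ is (Morita equivalent to) $\cG_k$ itself while over $\hat\xi=0$ it is $(\ker p_k)^*\rtimes\cG_k$; thus the restriction of $\rho_1$ there is exactly the deformation-groupoid presentation of the Baum--Connes map $\mu_{M_k}$ for $\cG_k$, which is an isomorphism precisely because $M_k$ is a classifying space for proper actions of $\cG_k$ and the full Baum--Connes conjecture is assumed for $\cG_k$ (here one uses amenability/properness of the action so that $E$-theory agrees with $KK$-theory, as recorded in \S\ref{subsubsec:Subm}). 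For the pieces over the integer points $k$, the groupoid $\bG_k$ is built from $(\widetilde\bH_k)_Q$, and by the construction of \S\ref{subsec:lhscones} (Proposition on $\sigma_0$ and its corollary) the corresponding evaluation is the Baum--Connes map for the mapping cone of $\pi_k:\cG'_k\to\cG_{k-1}$; by Lemma~\ref{lem:trivlemma} and Remark~\ref{rem:commutK}, since the Baum--Connes maps for $\cG'_k$, $\cG_k$ and $\cG_{k-1}$ are all isomorphisms (the hypothesis covers all $\cG_k$, and $\cG'_k$ is an open restriction, hence its classifying space is the restricted one and Baum--Connes still holds), the induced map on the cone is an isomorphism too. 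Assembling these, $\ker\rho_1$ has $E$-contractible fibers, hence is $E$-contractible, hence $[\rho_1]$ is a $K$-equivalence, completing the proof.

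The main obstacle I anticipate is the bookkeeping needed to match, \emph{uniformly in $k$ and compatibly with the gluings}, the evaluation maps $\rho_0,\rho_1$ restricted to each stratum with the deformation-groupoid Baum--Connes maps of \S\ref{subsec:lhscones}; in particular one must check that the identifications $\Theta_k,\widehat\Theta_k$ send the boundary pieces $(\bH_k)_{Z_1}$, $(\widehat\bH_k)_{Z_1}$ to precisely the adiabatic models appearing in the definition of $\bG$, so that the two ends of each interval $(k,k+1)$ really are $\cG_k$ and $(\ker p_k)^*\rtimes\cG_k$ and the gluing at the integer points is consistent. The second delicate point is the passage from $KK$ to $E$ when the groupoids are only assumed amenable (not proper) globally — this is handled because the relevant algebras in $\ker\rho_1$ are built from \emph{proper} groupoids $(\ker p_k)^*\rtimes\cG_k$ and $(\ker p'_k)^*\rtimes\cG'_k$ (part~(1) of the Proposition shows $C^*(\bG_0)$ is type~I), so on the nose $E$-contractibility is what is available and what is needed, and one should be careful that the inductive-limit argument for the $C_0(0,n+1)$-algebra $\ker\rho_1$ is phrased entirely in $E$-theory as in \cite{Rosenberg-Schochet}. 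Everything else — the $K$-equivalence $\rho_0$, the Morita equivalence $\bE$, and the shift isomorphism $K_*(\cT)\cong K_{*+1}(C^*(M,\cF))$ — is already established in the cited propositions and used as a black box.
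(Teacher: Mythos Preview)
Your approach is essentially the paper's: reduce to $E$-contractibility of $\ker\rho_1$, use the $C_0(0,n+1)$-algebra structure to reduce to the fibres $(\ker\rho_1)_{(k,k+1)}$ and $(\ker\rho_1)_k$, and identify these with kernels of Baum--Connes-type evaluations. The paper's own argument is a single sentence stating exactly this reduction, so your added detail is in the right spirit.

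Two small corrections are worth noting. First, over the integer point $k$ the piece $(\widetilde\bH_k)_Q$ is not the Baum--Connes construction for the \emph{mapping cone} of $\pi_k$; rather, $(\bH_k)_Q=(\cH_0)_Q$ and $(\widehat\bH_k)_Q$ each only involve $\cG'_k$ (the $\cG_{k-1}$- and $\cG_k$-parts live over $Z_1$, not $Q$), and evaluation at the corner $(1/2,1/2)$ is precisely the deformation-groupoid Baum--Connes map for $\cG'_k$ itself. So what you actually need at the integer points is that the full Baum--Connes map for $\cG'_k$ is an isomorphism. Second, your claim that ``$\cG'_k$ is an open restriction, hence \ldots\ Baum--Connes still holds'' is not a standard implication: full Baum--Connes for a Lie groupoid does not in general pass to restrictions to open saturated subsets. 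The paper's terse proof does not address this point either; in the amenable case (the Corollary) it is automatic since $\cG'_k$ inherits amenability, but for the Theorem as stated one should either add the $\cG'_k$ to the hypothesis or supply an argument specific to the situation.
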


\begin{cor}
Let  $(M,\cF)$ be a nicely decomposable foliation. If all the groupoids $\cG_k\gpd W_k$ involved in this decomposition are amenable and their classifying spaces are manifolds, then the Baum-Connes map is an isomorphism.
\end{cor}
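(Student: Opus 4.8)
The plan is to deduce this directly from Theorem~\ref{mainthm}. By that theorem it is enough to show that each Lie groupoid $\cG_k\gpd W_k$ appearing in the nice decomposition satisfies the \emph{full} Baum--Connes conjecture, in the precise form used in \S\ref{sec:BCtelescope}: the assembly map $\mu_{M_k}=[ev_0]^{-1}\otimes[ev_1]\otimes[\cE]$ built from a classifying space $p_k:M_k\to W_k$ and the deformation groupoid $\DNC\big((\cG_k)_{p_k}^{p_k},M_k\rtimes\cG_k\big)_{[0,1]}$ induces an isomorphism on $K$-theory. Since by hypothesis $M_k$ is a smooth manifold, there is no inductive limit to take, $\mu_{M_k}$ is a single $KK$-element, and this is exactly the usual statement that the Baum--Connes map $K_*\big(C_0((\ker dp_k)^*)\rtimes\cG_k\big)\to K_*(C^*(\cG_k))$ is an isomorphism.

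First I would use that $\cG_k$ is amenable: then the canonical surjection from the full to the reduced $C^*$-algebra is an isomorphism for $\cG_k$ and equally for the crossed products $M_k\rtimes\cG_k$, $\ker(dp_k)\rtimes\cG_k$ and for the deformation groupoid $\DNC\big((\cG_k)_{p_k}^{p_k},M_k\rtimes\cG_k\big)_{[0,1]}$, so the full assembly map $\mu_{M_k}$ coincides with the reduced one; moreover $\mu_{M_k}$ already lies in $KK$ rather than merely in $E$-theory because the $\cG_k$-action on the classifying space $M_k$ is proper. It therefore suffices to know that the \emph{reduced} Baum--Connes map of $\cG_k$ is an isomorphism, and this is Tu's theorem (\cite{Tu}): for every amenable --- more generally a-T-menable --- groupoid the $\gamma$-element equals $1$ and the reduced assembly map (indeed with arbitrary coefficients) is an isomorphism. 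Feeding this back into Theorem~\ref{mainthm} gives that the Baum--Connes map of Definition~\ref{dfn:BCmaptelescMF} for $(M,\cF)$ is an isomorphism.

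The one point that must be checked with some care --- and where, if anywhere, the difficulty lies --- is that the deformation-to-the-normal-cone description of the assembly map used in \S\ref{sec:BCtelescope} really coincides, under the Morita equivalence $C^*\big((\cG_k)_{p_k}^{p_k}\big)\sim C^*(\cG_k)$, with the Dirac/dual-Dirac (Kasparov product) element whose invertibility is the content of Tu's theorem. This is the standard comparison between the two models of the Baum--Connes map (\cf the discussion of the Baum--Connes map of a Lie groupoid via deformation groupoids above, and \cite{ConnesSkandalis, HilsumSkandalis, Tu}); it requires no genuinely new argument, so the corollary is in the end a bookkeeping consequence of Theorem~\ref{mainthm} together with Tu's isomorphism theorem for amenable groupoids.
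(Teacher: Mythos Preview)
Your proposal is correct and takes exactly the approach the paper intends: the corollary is stated without proof because it follows immediately from Theorem~\ref{mainthm} together with Tu's theorem \cite{Tu1,Tu} that amenable (Hausdorff) Lie groupoids satisfy the Baum--Connes conjecture. Your additional remark that amenability forces the full and reduced $C^*$-algebras to coincide, so that Tu's result for the reduced assembly map yields the \emph{full} Baum--Connes conjecture required by Theorem~\ref{mainthm}, is precisely the missing link the paper leaves implicit.
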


\section{Two examples of foliations of singularity height one given by linear actions}\label{sec:computelinearactions}

In this section we compute the $K$-theory for two simple examples of  foliations of singularity height one. 

\subsection{\texorpdfstring{The $SO(3)$-action}{The SO(3)-action}}\label{sec:SO3}

In this section we consider the foliation $(\R^{3},\cF)$ defined by the action of $SO(3)$ on $\R^3$ (\cf example \ref{exs:basic}.\ref{examp:b}).

\subsubsection{Holonomy groupoid and exact sequences}

As discussed  in \ref{exs:basic}, $H(\cF)=SO(3)\times \{0\}\coprod \R_+^*\times S^2\times S^2$ and $\cF$ is nicely decomposable, in the sense of definition \ref{dfn:goodgpd} with $\Omega_0=\R^3\setminus \{0\}$ and $\cG_1=\R^3\rtimes SO(3)$.

Note that $SO(3)$ is compact and therefore amenable, so the reduced and full crossed product $C^{\ast}$-algebras coincide. The (full and reduced) $C^{\ast}$-algebra of $\cG_0$ is the crossed product $C_0(\R^3) \rtimes SO(3)$. 

Writing $\R^3\setminus \{0\}=\R_+^*\times S^2$, we find that $C^*(\cG|_{\Omega_0})=C_0(\R_+^*)\otimes (C(S^2)\rtimes SO(3))$ and
$C^*(M,\cF)|_{\Omega_0}=C_0(\R_+^*)\otimes \cK(L^2(S^2))$, 

Now figure \ref{fig:general} reads:
\begin{figure}[H]
\[
\xymatrix{
0 \ar[r] & C_0(\R_+^*)\otimes (C(S^2)\rtimes SO(3)) \ar[d]^{\widehat q} \ar[r]^i &C_0(\R^3) \rtimes SO(3) \ar[r] \ar[d]^{\pi} & C^{\ast}(SO(3)) \ar[r] \ar@{=}[d] & 0 &\ES\\
0 \ar[r] &         C_0(\R_{\ast}^{+}) \otimes \cK(L^2(S^2)) \ar[r]        & C^{\ast}(\R^3,\cF) \ar[r]               &  C^{\ast}(SO(3)) \ar[r]                   & 0 &\ES }
\]
\caption{Exact sequences for the $SO(3)$ action.\label{fig:SO3}}
\end{figure}
Here $\widehat q=\id_{C_0(\R_+^*)}\otimes q$ where $q:C(S^2) \rtimes SO(3)\to \cK(L^2(S^2))$ is obtained by integration along the fibers of the groupoid morphism $(t,s):S^2\rtimes SO(3)\to S^2\times S^2$.

\subsubsection{\texorpdfstring{Calculation of  $K$-theory with mapping cones}{Calculation of K-theory with mapping cones}}\label{sec:mapconSO3}

To describe the foliation $C^{\ast}$-algebra we give an interpretation of diagram \ref{fig:SO3} using mapping cones.

Denote by $\rho  : C^*(SO(3)) \to \cK(L^2(S^2))$ the natural representation of $SO(3)$ on $L^2(S^2)$. We thus have a diagram
\begin{figure}[H]
\[
\xymatrix{
C^{\ast}(SO(3)) \ar[rd]_{\rho} \ar[r]^j & C(S^2) \rtimes SO(3) \ar[d]^{q} \\
  & \cK(L^2(S^2)) 
}
\]
\caption{Mapping cones for the $SO(3)$ action.\label{diag:conesSO3}}
\end{figure}
where $j:C^{\ast}(SO(3))\to C(S^2) \rtimes SO(3)$ is the morphism induced by the unital inclusion $\C\to C(S^2)$.

Identify  $C_0(\R^3)$ with the mapping cone of $\C\to C(S^2)$. Taking crossed products by the action of $SO(3)$ and using the diagram in figure \ref{fig:SO3}, we find:
\begin{itemize}
\item The crossed product $C^{\ast}$-algebra $C_0(\R^3)\rtimes SO(3)$ in extension $(EC5)$ is the mapping cone $\cC_p$, where $p$ is the map $j : C^{\ast}(SO(3)) \to C(S^2)\rtimes SO(3)$

\item The foliation $C^{\ast}$-algebra $C^{\ast}(\cF)$ in extension $(EC6)$ is the mapping cone $\cC_{\rho}$.
\end{itemize}

To describe $C^{\ast}(\cF)$ it suffices to describe the representation $\rho : C^{\ast}(SO(3)) \to \cK(L^2(S^2))$. 

It follows from the Peter-Weyl theorem $C^{\ast}(SO(3)) = \oplus_{m\in \N} M_{2m+1}(\C)$ and $K_0(C^{\ast}(SO(3)))=\Z^{(\N)}$ (and $K_1(C^{\ast}(SO(3))) =\{0\}$).

In order to compute the map $\rho_*:K_0(C^{\ast}(SO(3)))\to \Z$, we have to understand how many times the representation $\sigma_{m}$ (of dimension $2m+1$) appears in $\rho $, \ie count the dimension of $\mathrm{Hom}_{SO(3)}(\sigma_m,\rho)$.

Since $S^2=SO(3)/S^1$, the representation $\rho$ is the representation $\mathrm{Ind}_{S^1}^{SO(3)}(\varepsilon)$ induced by the trivial representation $\varepsilon$ of $S^1$. Using the Frobenius reciprocity theorem we know $\dim(\mathrm{Hom}_{SO(3)}(\sigma_m,\rho) = \dim(\mathrm{Hom}_{S^1}(\sigma_m,\varepsilon)=1$.

It follows that the map $\rho_*:K_0(C^{\ast}(SO(3)))\to \Z$ is the map which maps each generator $[\sigma_m]$ of $K_0(C^{\ast}(SO(3)))$ to $1$.

We immediately deduce:

\begin{prop}\label{prop:KthSO3}
We have $K_0(C^{\ast}(\cF))=\ker \rho_*\simeq \Z^{(\N)}$ and $K_1(C^{\ast}(\cF))=0$.\hfill$\square$
\end{prop}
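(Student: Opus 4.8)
The plan is to read off the $K$-theory of $C^*(\cF)$ from the mapping cone description $C^*(\cF)\simeq \cC_\rho$ obtained just above, together with the explicit computation of the map $\rho_*$ on $K$-theory. Recall that for any $*$-homomorphism $u:A\to B$ of $C^*$-algebras there is the Puppe exact sequence relating $K_*(\cC_u)$, $K_*(A)$ and $K_*(B)$; concretely, since $\cC_u$ sits in the extension
\[
0\to B(0,1)\to \cC_u\to A\to 0,
\]
the six-term sequence reads
\[
\xymatrix{
K_1(B)\ar[r] & K_0(\cC_u)\ar[r] & K_0(A)\ar[r]^{u_*} & K_0(B)\ar[r] & K_1(\cC_u)\ar[r] & K_1(A)\ar[r]^{u_*} & K_1(B),
}
\]
where the connecting maps $K_i(A)\to K_i(B)$ are precisely $u_*$ (this is the standard identification of the boundary map of a mapping cone extension with the map induced by $u$; it is, e.g., in \cite{Sk} or any standard reference). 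So the only inputs needed are $K_*(C^*(SO(3)))$, $K_*(\cK(L^2(S^2)))$, and the map $\rho_*$ between them, all of which are established in the lines preceding the statement.

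First I would record that $K_0(\cK(L^2(S^2)))=\Z$, $K_1(\cK(L^2(S^2)))=0$, and that by Peter--Weyl $C^*(SO(3))=\bigoplus_{m\in\N}M_{2m+1}(\C)$, hence $K_0(C^*(SO(3)))=\Z^{(\N)}$ with canonical generators $[\sigma_m]$ and $K_1(C^*(SO(3)))=0$. Next, the Frobenius reciprocity computation carried out above shows that $\rho_*:\Z^{(\N)}\to\Z$ sends each generator $[\sigma_m]$ to $1$; in particular $\rho_*$ is surjective (its restriction to any single summand is already onto) and its kernel is $\{(n_m)_{m\in\N}\in\Z^{(\N)}:\sum_m n_m=0\}$, which is again a free abelian group on countably many generators (for instance $[\sigma_m]-[\sigma_0]$, $m\ge 1$), so $\ker\rho_*\cong\Z^{(\N)}$.

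Then I would feed this into the six-term sequence with $A=C^*(SO(3))$, $B=\cK(L^2(S^2))$, $u=\rho$. Since $K_1(A)=K_1(B)=0$, the piece $K_1(\cC_\rho)\to K_1(A)\to K_1(B)$ forces a surjection $K_0(B)\to K_1(\cC_\rho)\to 0$, and since $\rho_*:K_0(A)\to K_0(B)$ is surjective, exactness at $K_0(B)$ gives that the map $K_0(B)\to K_1(\cC_\rho)$ is zero; hence $K_1(\cC_\rho)=0$. At the other end, $K_1(B)=0$ gives an injection $0\to K_0(\cC_\rho)\to K_0(A)$ whose image is $\ker(\rho_*:K_0(A)\to K_0(B))$ by exactness at $K_0(A)$; therefore $K_0(\cC_\rho)\cong\ker\rho_*\cong\Z^{(\N)}$. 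Combining with $C^*(\cF)\simeq\cC_\rho$ yields $K_0(C^*(\cF))\cong\Z^{(\N)}$ and $K_1(C^*(\cF))=0$, as claimed.

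There is no serious obstacle here: every ingredient (the mapping cone identification, the representation-theoretic structure of $C^*(SO(3))$, the Frobenius reciprocity count, and the exact sequence of a mapping cone) is already in place in the excerpt. The only point requiring a little care is the identification of the connecting homomorphism in the mapping cone six-term sequence with $\rho_*$ — this is standard but worth stating explicitly — and the elementary verification that $\{(n_m):\sum n_m=0\}$ is free of countably infinite rank, which is immediate since it is a subgroup of the free abelian group $\Z^{(\N)}$ and visibly infinitely generated.
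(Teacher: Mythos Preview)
Your proof is correct and is precisely the argument the paper leaves implicit: the paper simply writes ``We immediately deduce'' and marks the proposition with a $\square$, relying on the mapping cone identification $C^*(\cF)\simeq\cC_\rho$ and the computation of $\rho_*$ already carried out. Your use of the Puppe six-term sequence for $\cC_\rho$ is exactly the intended route, and all the details you supply (surjectivity of $\rho_*$, identification of $K_0(\cC_\rho)$ with $\ker\rho_*$, and freeness of that kernel) are correct.
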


\begin{remark}
In the same way, one may easily compute $j_*:K_0(C^*(SO(3))\to K_0(C(S^2)\rtimes C^*(SO(3))$ and $K_*(C_0(\R^3) \rtimes SO(3))$. In fact, this is a really classical result that we recall briefly here: The algebra $C(S^2)\rtimes C^*(SO(3))$ is Morita equiavalent to $C^*(S^1)$ and the morphism $j_*:K_0(C^*(SO(3)))\to K_0(C^*(S^1))$ is the restriction morphism $R(SO(3))\to R(S^1)$ where $R(G)=K_0(C^*(G))$ is the representation ring of a compact group $G$ (see \cite{Rieffel, Julg}).

It follows that $j_*([\sigma_m])=\sum_{k=-m}^m[\chi_k]$ where the $(\chi_k)_{k\in \Z}$ are the characters of $S^1$. The morphism $j_*$ is therefore (split) injective, and we find $K_0(C_0(\R^3) \rtimes SO(3))=0$ and $K_1(C_0(\R^3) \rtimes SO(3))\simeq \Z^{(\N)}$.
\end{remark}

%
%
%

\subsection{\texorpdfstring{The $SL(2,\R)$-action}{The SL(2,R)-action}}\label{sec:SL2}

We consider the foliation on $\R^2$ induced by the action of $SL(2,\R)$. Recall the following:
\begin{enumerate}
\item $SL(2,\R)$ is not compact and not amenable, but it was shown by Kasparov in \cite{Kasparov1984} to be $KK$-amenable. 
\item Its maximal compact is $S^1$.
\item The action of $SL(2,\R)$ on $\R^2\setminus\{0\}$ is transitive and the stabilizer of the point $(1,0)$ is the set of matrices of the form 
$\begin{pmatrix}
 1&t\\0&1
\end{pmatrix}$.
 Whence the action groupoid $(\R^2 \setminus\{0\})\rtimes SL(2,\R)$ is Morita equivalent to the group $\R$. So the crossed product $C_0(\R^2\setminus\{0\})\rtimes SL(2,\R)$ is Morita equivalent to the group $C^*$-algebra $C^*(\R)$.
\item It follows as above from Lemma \ref{lem:holgpd} (see also \cite[Ex. 3.7]{AS1}) that the associated holonomy groupoid is $$H(\cF) = (\R^2\setminus\{0\} \times \R^2\setminus\{0\}) \bigcup \{0\} \times SL(2,\R)$$ 
\end{enumerate}
It follows that this foliation is nicely decomposable of singularity height $1$ with $\cG=\cG_0=\R^2 \rtimes SL(2,\R)$ (see remark \ref{rem:length1}). Here the diagram of figure \ref{fig:general} reads:

\begin{figure}[H]
\[
\xymatrix{
0 \ar[r] & (C_0(\R^2\setminus\{0\}))\rtimes SL(2,\R) \ar[r]^\iota \ar[d]^{\pi_1} & C_0(\R^2) \rtimes SL(2,\R) \ar[r] \ar[d]^{\pi}    & C^{\ast}(SL(2,\R)) \ar[r] \ar@{=}[d]  & 0\\
0 \ar[r] &    \cK(L^2(\R^2\setminus\{0\})) \ar[r]    & C^{\ast}(\cF) \ar[r]                              &  C^{\ast}(SL(2,\R)) \ar[r]        & 0   
}
\]
\caption{Exact sequences for the $SL(2,\R)$ action.\label{fig:SL2}}
\end{figure}
Recall that the $C^{\ast}$-algebras involved are full $C^{\ast}$-algebras.

\subsubsection{\texorpdfstring{Direct Calculation of  $K$-theory}{Direct Calculation of K-theory}}\label{sec:SL2direct}

The short exact sequence $$0 \to \cK(L^{2}(\R^2\setminus\{0\})) \to C^{\ast}(\cF) \to C^{\ast}(SL(2,\R)) \to 0$$ gives the 6-terms exact sequence:
\[
\xymatrix{
K_0(\cK(L^{2}(\R^2\setminus\{0\}))) \ar[r]   & K_0(C^{\ast}(\cF)) \ar[r] & K_0(C^{\ast}(SL(2,\R))) \ar[d] \\ 
K_1(C^{\ast}(SL(2,\R))) \ar[u]  & K_1(C^{\ast}(\cF)) \ar[l] & K_1(\cK(L^{2}(\R^2\setminus\{0\}))) \ar[l]
}
\]
We have $K_0(\cK(L^{2}(\R^2\setminus\{0\}))) = \Z$ and $K_1(\cK(L^{2}(\R^2\setminus\{0\}))) = 0$. On the other hand, using the Connes-Kasparov conjecture proved in \cite{Kasparov1984, Wassermann}, we have $K_1(C^{\ast}(SL(2,\R)))=0$. We conclude that $$K_{1}(C^{\ast}(\R^2,\cF))=0 \quad \text{and} \quad K_0(C^{\ast}(\R^2,\cF))=\Z \oplus K_0(C^{\ast}(SL(2,\R))) = \Z \oplus \Z^{(\Z)}$$

\subsubsection{\texorpdfstring{Calculation of  $K$-theory with mapping cones}{Calculation of  K-theory with mapping cones}}\label{sec:SL2mapcone}

Although the above construction is quite direct, it may be worth examining a construction following the general procedure of section \ref{sec:MappingConeGeneral} (Prop.  \ref{prop:Eequiv}):

To apply the mapping cones approach we gave in section \ref{sec:MappingConeGeneral}, we need the following result which follows from Kasparov \cite{Kasparov1984, Kasparov1988}.
\begin{prop}\label{prop:Kasparov}
Let $SL(2,\R)$ act on a $C^{\ast}$-algebra $A$ by automorphisms. The algebras $A\rtimes SL(2,\R)$ and $A \rtimes S^1$ are $KK$-equivalent.
\end{prop}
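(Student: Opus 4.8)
The plan is to deduce Proposition~\ref{prop:Kasparov} from Kasparov's $\gamma$-element machinery \cite{Kasparov1984, Kasparov1988}, of which it is a ``with coefficients'' form of the Connes--Kasparov isomorphism for $G=SL(2,\R)$. Write $K=S^1$ for the maximal compact subgroup and $X=G/K$, the hyperbolic plane: a complete simply connected manifold of non-positive curvature on which $G$ acts properly, isometrically and transitively, carrying a $G$-invariant complex (hence $\mathrm{Spin}^c$) structure. First I would recall the two $G$-equivariant $KK$-classes produced by Kasparov: the \emph{Dirac element} $\alpha\in KK_G(C_0(X),\C)$, the class of the $\mathrm{Spin}^c$-Dirac operator along $X$, and the \emph{dual-Dirac element} $\beta\in KK_G(\C,C_0(X))$, built from the geodesic Bott map, for which $\beta\otimes_{C_0(X)}\alpha=1_\C$ in $KK_G(\C,\C)$ --- that is, the $\gamma$-element of $SL(2,\R)$ equals $1$. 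This is the only serious input; the rest is formal manipulation.

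Next I would tensor over $\C$ with $A$ and apply descent. Tensoring gives $1_A\otimes\beta\in KK_G(A,\,A\otimes C_0(X))$ and $1_A\otimes\alpha\in KK_G(A\otimes C_0(X),\,A)$ with $(1_A\otimes\beta)\otimes(1_A\otimes\alpha)=1_A$. Applying Kasparov's descent homomorphism $j_G$ for the full crossed products --- which is compatible with the Kasparov product --- produces classes
$$
\widehat\beta\in KK\big(A\rtimes G,\ (A\otimes C_0(X))\rtimes G\big),
\qquad
\widehat\alpha\in KK\big((A\otimes C_0(X))\rtimes G,\ A\rtimes G\big)
$$
whose composition $\widehat\beta\otimes\widehat\alpha$ is $1_{A\rtimes G}$. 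The reverse composition $\widehat\alpha\otimes\widehat\beta$ is $j_G$ applied to $1_A\otimes(\alpha\otimes_\C\beta)\in KK_G(A\otimes C_0(X),A\otimes C_0(X))$; here I would invoke that $C_0(X)$ is a \emph{proper} $G$-algebra, so $A\otimes C_0(X)$ is proper, and that on proper $G$-algebras the $\gamma$-element descends to the identity (equivalently, once $\gamma=1$ the ``Dirac morphism with coefficients'' is a $KK$-equivalence). Hence $A\rtimes G$ and $(A\otimes C_0(X))\rtimes G$ are $KK$-equivalent. Finally, since $X=G/K$, Green's imprimitivity theorem identifies $(A\otimes C_0(G/K))\rtimes G$ with $(A|_K)\rtimes K=A\rtimes S^1$ up to strong Morita equivalence, hence up to $KK$-equivalence. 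Composing the two equivalences yields $A\rtimes SL(2,\R)\sim_{KK}A\rtimes S^1$. Since $SL(2,\R)$ is $K$-amenable and $S^1$ is amenable, full and reduced crossed products agree throughout, so there is no loss in phrasing everything with full crossed products, as elsewhere in the paper.

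The main obstacle, as in every Dirac--dual-Dirac argument, is the equality $\widehat\alpha\otimes\widehat\beta=1$ on the proper-coefficient side: this step genuinely uses the properness of the $G$-space $X=G/K$ --- concretely, that $\gamma$ acts as the identity after descent on crossed products by proper $G$-algebras --- rather than any Bott-periodicity-type identity at the level of $KK_G(C_0(X),C_0(X))$ itself. Everything else (descent, functoriality of the Kasparov product under descent, Green's imprimitivity theorem, and $K$-amenability of $SL(2,\R)$) is standard and can be quoted from \cite{Kasparov1984, Kasparov1988}.
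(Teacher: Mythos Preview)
Your proposal is correct and follows exactly the approach the paper indicates: the paper's own proof is essentially a two-line citation of Kasparov \cite{Kasparov1984}, noting that $S^1$ is maximal compact in $SL(2,\R)$ and that $SL(2,\R)/S^1$ carries an invariant spin$^c$ structure, whence the result follows from the Dirac--dual-Dirac machinery. You have simply unpacked that citation in detail (descent, Green imprimitivity, $\gamma=1$), which is precisely the intended argument.
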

\begin{proof}
The Lie group $S^1$ is a maximal compact subgroup of $SL(2,\R)$. Note also that $SL(2,\R)/S^1$ is the Poincaré half plane and therefore admits a complex structure, hence an $SL(2,\R)$-invariant spin$^c$ structure. The result follows from \cite{Kasparov1984}.
\end{proof}

It follows in fact from \cite{Kasparov1984} that the exact sequences $$0\to C_0(\R^2\setminus\{0\}) \rtimes SL(2,\R)\to C_0(\R^2) \rtimes SL(2,\R)\to C^*(SL(2,\R))\to 0$$ and $$0\to C_0(\R^2\setminus\{0\}) \rtimes S^1\to C_0(\R^2) \rtimes S^1\to C^*(S^1)\to 0$$ are $KK$-equivalent. Note that\begin{itemize}
\item  $K_0(C^*(SL(2,\R)))=K_0(C^*(S^1))=\Z^{(\Z)}$, and $K_1=0$. 
\item since $S^1$ acts freely on $\R^2\setminus\{0\}$, with quotient $\R_+^*$, it follows that  $C_0(\R^2\setminus\{0\}) \rtimes S^1$ is Morita equivalent to $C_0(\R_+^*)$; also since $SL(2,\R)$ acts transitively on $\R^2\setminus\{0\})$ with stabilizers isomorphic to $\R$,  it follows that  $C_0(\R^2\setminus\{0\}) \rtimes SL(2,\R)$ is Morita equivalent to $C^*(\R)$.  It follows that $K_1(C(\R^2\setminus\{0\})\rtimes SL(2,\R))=K_1(C(\R^2\setminus\{0\})\rtimes S^1)=\Z$ and $K_0=0$.
\item Using the complex structure of $\R^2$, we have a Bott isomorphism of $K_*(C_0(\R^2) \rtimes S^1)$ with $K_*(C^*(S^1))$. It follows that $K_0(C(\R^2)\rtimes SL(2,\R))=K_0(C(\R^2)\rtimes S^1)=\Z^{(\Z)}$, and $K_1=0$. 
\end{itemize}

From this discussion, it follows that the morphism $\iota:C_0(\R^2\setminus\{0\}) \rtimes SL(2,\R)\to C_0(\R^2) \rtimes SL(2,\R)$ induces the $0$ map in $K$-theory, and so does the map $\pi:C_0(\R^2\setminus\{0\}) \rtimes SL(2,\R)\to \cK$.

\begin{remark}
Denoting by $(\chi_n)_{n\in \Z}$ the characters of $S^1$, the image of $[\chi_n]\in K_*(C_0(\R^2) \rtimes S^1)$  by  $C_0(\R^2) \rtimes S^1\to C^*(S^1)$ (evaluation at $0$) is $[\chi_n]-[\chi_{n+1}]$. This morphism is one to one and its image is the set of elements in $R(S^1)$ of dimension $0$.
\end{remark}


As the maps $\iota$ and $\pi $ induce the $0$ map in $K$-theory, we find as above from Prop. \ref{prop:Eequiv}.

\begin{prop}\label{prop:conesSL2}
Let $\cF$ be the foliation defined by the action of $SL(2,\R)$ on $\R^2$. We have $$K_0(\cF)\simeq K_0(C_0(\R^2)\rtimes SL^2(\R))\oplus K_0(\cK)\oplus K_1(C_0(\R^2\setminus\{0\})\rtimes SL^2(\R))\simeq \Z^{(\N)}\oplus \Z\oplus \Z$$ and $K_1(\cF)=0$.\hfill$\square$
\end{prop}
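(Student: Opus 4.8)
The plan is to deduce Proposition \ref{prop:conesSL2} from Proposition \ref{prop:Eequiv} together with the vanishing of the $K$-theory maps induced by $\iota$ and $\pi_1$ established just above. Recall that Proposition \ref{prop:Eequiv} identifies $C^*(\R^2,\cF)$, up to a shift of degree in $E$-theory, with the mapping cone $\cC_{(\iota,\pi_1)}$ of the morphism
$$(\iota,\pi_1):C_0(\R^2\setminus\{0\})\rtimes SL(2,\R)\longrightarrow \big(C_0(\R^2)\rtimes SL(2,\R)\big)\oplus \cK(L^2(\R^2\setminus\{0\})).$$
So the first step is simply to write down the Puppe (six-term) exact sequence in $K$-theory for this mapping cone:
$$\cdots\to K_i\big(\cC_{(\iota,\pi_1)}\big)\to K_i\big(C_0(\R^2\setminus\{0\})\rtimes SL(2,\R)\big)\overset{(\iota_*,(\pi_1)_*)}{\longrightarrow} K_i\big(C_0(\R^2)\rtimes SL(2,\R)\big)\oplus K_i(\cK)\to\cdots$$

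The second step is to feed in the computations collected in the bulleted discussion before the proposition. We have $K_0\big(C_0(\R^2\setminus\{0\})\rtimes SL(2,\R)\big)=0$ and $K_1=\Z$; $K_0\big(C_0(\R^2)\rtimes SL(2,\R)\big)=\Z^{(\Z)}\cong\Z^{(\N)}$ and $K_1=0$; $K_0(\cK)=\Z$ and $K_1(\cK)=0$. Crucially, the maps $\iota_*$ and $(\pi_1)_*$ are both zero (this is exactly the consequence of \cite{Kasparov1984} recorded right before the statement: the $KK$-equivalence with the $S^1$-picture reduces everything to the restriction-type map $R(S^1)\to\Z$ and to the evaluation $K_*(C_0(\R^2)\rtimes S^1)\to K_*(C^*(S^1))$, both of which kill the relevant classes). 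Hence the connecting maps in the Puppe sequence vanish and the sequence splits into short exact sequences
$$0\to K_i\big(C_0(\R^2)\rtimes SL(2,\R)\big)\oplus K_i(\cK)\to K_{i+1}\big(\cC_{(\iota,\pi_1)}\big)\to K_{i+1}\big(C_0(\R^2\setminus\{0\})\rtimes SL(2,\R)\big)\to 0,$$
where I have used that the boundary map out of $K_{i+1}(\cC_{(\iota,\pi_1)})$ lands in $K_i$ of the source, which is $(\iota_*,(\pi_1)_*)=0$. These sequences of abelian groups are automatically split since the quotient term $K_{i+1}\big(C_0(\R^2\setminus\{0\})\rtimes SL(2,\R)\big)$ is free (it is $0$ or $\Z$).

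The third step is bookkeeping of the degree shift. Proposition \ref{prop:Eequiv} gives $C^*(\R^2,\cF)\sim_{E^1}\cC_{(\iota,\pi_1)}$, i.e. $K_*(C^*(\R^2,\cF))\cong K_{*+1}(\cC_{(\iota,\pi_1)})$. Combining with the split short exact sequences above we get
$$K_0(C^*(\R^2,\cF))\cong K_1(\cC_{(\iota,\pi_1)})\cong K_0\big(C_0(\R^2)\rtimes SL(2,\R)\big)\oplus K_0(\cK)\oplus K_1\big(C_0(\R^2\setminus\{0\})\rtimes SL(2,\R)\big)\cong \Z^{(\N)}\oplus\Z\oplus\Z,$$
and
$$K_1(C^*(\R^2,\cF))\cong K_0(\cC_{(\iota,\pi_1)})\cong K_1\big(C_0(\R^2)\rtimes SL(2,\R)\big)\oplus K_1(\cK)\oplus K_0\big(C_0(\R^2\setminus\{0\})\rtimes SL(2,\R)\big)=0.$$
This is precisely the claimed statement, and it is of course consistent with the direct computation of \S\ref{sec:SL2direct}.

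The only genuinely nontrivial input is the vanishing of $\iota_*$ and $(\pi_1)_*$; everything else is a diagram chase plus the $K$-theory of $\cK$, of $C^*(SL(2,\R))$ (Connes--Kasparov), and of the two crossed products over $\R^2\setminus\{0\}$ (Morita equivalence to $C^*(\R)$, resp. to $C_0(\R_+^*)$). Since the excerpt already establishes this vanishing as a consequence of Kasparov's work on $SL(2,\R)$ and the Bott/restriction computations in the preceding paragraphs, the main obstacle is really just ensuring the degree shift in Proposition \ref{prop:Eequiv} is tracked correctly and that the splitting is justified by freeness of the relevant quotient groups; both are routine. Thus no serious difficulty remains, and the proof reduces to assembling these pieces.
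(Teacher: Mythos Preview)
Your proof is correct and follows essentially the same approach as the paper: apply Proposition~\ref{prop:Eequiv} to replace $C^*(\R^2,\cF)$ by the mapping cone $\cC_{(\iota,\pi_1)}$ (with a degree shift), then use that $\iota_*$ and $(\pi_1)_*$ vanish so that the Puppe sequence splits. One small comment: the vanishing of $(\iota,\pi_1)_*$ is in fact automatic for degree reasons once the $K$-groups are computed, since $K_0$ of the source and $K_1$ of the target are both zero --- so your appeal to the ``restriction-type map $R(S^1)\to\Z$'' is unnecessary (and not quite what is going on), though harmless.
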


Note that we have a split short exact sequence $0\to K_0(C_0(\R^2)\rtimes SL^2(\R))\to K_0(C^*(SL^2(\R)))\to K_1(C_0(\R^2\setminus\{0\})\rtimes SL^2(\R))\to 0$, and thus the results of prop. \ref{prop:conesSL2} and the direct calculation (section \ref{sec:SL2direct}) are coherent.

\subsection{Generalizations}

The examples introduced above can be extended to the action of $SO(n)$ or $SL(n,\R)$ on $\R^n$. Let us discuss here a slightly more general situation which still gives singularity height one foliations.

\subsubsection{Subgroups of $SO(n)$}

Let $G$ be a connected closed subgroup of $SO(n)$. Assume that its action on $S^{n-1}$ is transitive, and let $H\subset G$ be the stabilizer of a point in $S^{n-1}$. Denote by $\cF$ the foliation of $\R^n$ associated with the action of $G$. Exactly as in the case of the action of $SO(3)\in \R^3$, we find that\begin{itemize}
\item $H(\cF)=G\times \{0\}\coprod \R_+^*\times S^{n-1}\times S^{n-1}$;
\item  $C^*(\R^n,\cF)$ is the mapping cone of the morphism $C^*(G)\to \cK(S^{n-1})$. 
\item The map $R(G)\to \Z$ corresponding to this morphism associates to a (virtual) representation $\sigma$ the (virtual) dimension of its $H$ fixed points. It is onto, and therefore $K_0(C^*(\R^n,\cF))=\Z^\N$ and $K_1(C^*(\R^n,\cF))=0$.
\end{itemize}

\subsubsection{\texorpdfstring{Subgroups of $GL_n$}{Subgroups of  GL(n)}}\label{sec:SLn}

Let now $G$ be a closed connected subgroup of $GL(n,\R)$. Assume that its action on $\R^n\setminus \{0\}$ is transitive, and let $H\subset G$ be the stabilizer of a nonzero point in $\R^n$. 

As for the case of $SL(2,\R)$ acting on $\R^2$, we have:

\begin{itemize}
\item The holonomy groupoid is $H(\cF) = (\R^n \setminus\{0\}) \times (\R^n \setminus\{0\}) \coprod G \times \{0\}$. 
\item We have an exact sequence of full $C^*$-algebras $0\to \cK(L^2(\R^n \setminus\{0\}))\to C^*(\R^n,\cF)\to C^*(G)\to 0$ and therefore an exact sequence:
$$0\to K_1(C^*(\R^n,\cF))\to K_1(C^*(G))\stackrel{\partial}{\longrightarrow} \Z\to K_0(C^*(\R^n,\cF))\to K_0(C^*(G))\to 0.$$
In order to try and compute the connecting map $\partial$, we may use the diagram of figure \ref{fig:general}. Note that the groupoid $(\R^n \setminus\{0\})\rtimes G$ is Morita equivalent to the group $H$. Following this diagram, the connecting map $\partial$ is the composition of the trivial representation of $H$ of with the connecting map $\partial ':K_1(C^*(G))\to K_0(C_0(\R^n \setminus\{0\})\rtimes G)\simeq K_0(C^*(H))$.
\end{itemize}

An example of this kind is of course $SL(n,\C)\subset SL(2n,\R)$.  The stabilizer group of a $z \in \C^n\setminus\{0\}$, say $z=(1,0,\ldots,0)$ is the group of matrices in $SL(n,\C)$ whose first row is $(1,0,\ldots,0)$. That is $\C^{n-1}\rtimes SL(n-1,\C)$.

Another example is given by $G=G_1\times \R_+^*$ where $G_1$ is a connected closed subgroup of $SO(n)$ whose action on $S^{n-1}$ is transitive, and $\R_+^*$ acts by similarities. Note that if $\cF_1$ is the foliation defined by the action of $G_1$, there is a natural action of $\R_+^*$ on $H(\cF_1)$ and $H(\cF)$ is a semi-direct product $H(\cF_1)\rtimes \R_+^*$; we find $C^*(\R^n,\cF)=C^*(\R^n,\cF_1)\rtimes \R_+^*$. Thanks to the Connes-Thom isomorphism, the algebras $C^*(\R^n,\cF)$ and $C^*(\R^n,\cF_1)$ have the same $K$-theory up to a shift of dimension.

\section{\texorpdfstring{Actions of $\R$ on manifolds}{actions of R on manifolds}}\label{sec:X}

We come to example \ref{exs:basic}.\ref{example:actionofR}). Let $M$ be a manifold endowed with a smooth action $\alpha$ of $\R$. Let $\cF$ be foliation associated with this action - \ie with the the groupoid $M\rtimes_\alpha \R$. We keep the notation of  example \ref{exs:basic}.\ref{example:actionofR}).

We showed that $\cF$ is nicely decomposable in the sense of definition \ref{dfn:goodgpd}. Here we compute the $K$-theory using an exact sequence. Note that in this example, in presence of periodic points, the groupoid $\cG_0$ is not always Hausdorff and its classifying space for proper actions is not a manifold, therefore theorem \ref{mainthm} does not apply directly.

From proposition \ref{prop:nicedecompoactionofR} we deduce that the groupoid $\cG_1'\gpd M$ which coincides with $H(\cF)$ on the complement of $W$ and with $W\times \R$ on $W$ is a (non necessarily Hausdorff) Lie groupoid and gives rise to the nice decomposition $(W\gpd W, \cG_1'\gpd M)$ of $\cF$. We will rather exploit this one in the computations below.

Put also $Y=M\setminus W$.

\subsection{Exact sequence of fixed points}

\begin{prop}
The $KK^1$-element associated with the exact sequence
\[\xymatrix{
0 \ar[r] & C_0(W)  \ar[r] \ar[r] & C^{\ast}(M,\cF)  \ar[r] & C^*(M,\cF)|_{Y}\ar[r]   & 0 &(EC1)}
\]
 is $0$.
\end{prop}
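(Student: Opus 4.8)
The plan is to exhibit a completely positive, contractive, $C^*(M,\cF)|_Y$-equivariant splitting of the quotient map $C^*(M,\cF)\to C^*(M,\cF)|_Y$, indeed a genuine $*$-homomorphism splitting, which forces the boundary element in $KK^1(C^*(M,\cF)|_Y,C_0(W))$ to vanish. The natural candidate comes from the structure of $\cG_1'$: on the closed saturated set $Y=M\setminus W$ (which contains $\partial W$) the groupoid $\cG_1'$ restricts to $H(\cF)|_Y$, and on the open set $W$ it is just the trivial bundle of groups $W\times\R$, over which the foliation is by points. The key point is that the extension $0\to C_0(W)\to C^*(M,\cF)\to C^*(M,\cF)|_Y\to 0$ is the extension of $C^*$-algebras of the longitudinally smooth groupoid $\cG_1'$ (equivalently of $H(\cF)$) associated to the decomposition of its unit space into the open part $W$ and its closed complement $Y$. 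So what I must produce is a groupoid-level section: a way to realize $H(\cF)|_Y = \cG_1'|_Y$ as a ``subquotient that lifts'', i.e. a continuous groupoid homomorphism $\cG_1'|_Y \to \cG_1'$ (or at the level of the path-holonomy atlas, a compatible family of bisubmersions) splitting the restriction map. Since $Y$ is closed in $M$ and $\cG_1'|_Y$ sits inside $\cG_1'$ as the restriction to a closed saturated subset, the honest statement is rather that $C_0(W)$ is an ideal in $C^*(\cG_1')$ with a complemented quotient; the cleanest route is therefore the second one below.

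\textbf{Preferred route via a direct sum decomposition.} Recall from Example \ref{exs:basic}.\ref{example:actionofR} (last paragraph of its discussion) that $\cG_0 = H(\Omega_0,\cF_{|\Omega_0})$ is a disjoint union of clopen subgroupoids $W\coprod H(V',\cF_{|V'})$, and hence $C^*(\Omega_0,\cF_{|\Omega_0}) = C_0(W)\oplus C^*(V',\cF_{|V'})$. The same phenomenon persists globally: because $W=\mathrm{Int}(\mathrm{Fix}(\alpha))$ is \emph{open and saturated} and the leaves through points of $W$ are single points (as $\cF_x=0$ for $x\in W$), the groupoid $\cG_1'=H(\cF)$ decomposes as a disjoint union of the clopen subgroupoid $W$ (the trivial groupoid on $W$, with only units) and the subgroupoid $\cG_1'|_{M\setminus W}$ restricted over the complement --- the two being literally disjoint as subsets of $\cG_1'$ since no arrow of $\cG_1'$ connects $W$ to $M\setminus W$ (again by saturation) and no nontrivial arrow of $\cG_1'$ has source and range in $W$. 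Hence at the $C^*$-level $C^*(M,\cF)=C_0(W)\oplus C^*(M,\cF)|_{M\setminus W}$ as a direct sum, and the extension $(EC1)$ is the split extension $0\to C_0(W)\to C_0(W)\oplus C^*(M,\cF)|_{M\setminus W}\to C^*(M,\cF)|_{M\setminus W}\to 0$. Wait --- I must be careful: the quotient in $(EC1)$ is $C^*(M,\cF)|_Y$ with $Y=M\setminus W$, which is exactly $C^*(M,\cF)|_{M\setminus W}$ by definition of restriction to a closed saturated set, so this is precisely the statement. A split extension has zero boundary class in $KK^1$, which is the claim.

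\textbf{Key step and main obstacle.} The substantive point to nail down is the assertion that $\cG_1' = H(\cF)$ decomposes as the disjoint clopen union $W \sqcup (\cG_1')|_{M\setminus W}$, and equivalently that $C^*(M,\cF)$ decomposes as the corresponding $C^*$-direct sum. For this I would argue: (i) $W$ is open and saturated, so $(\cG_1')|_W$ is an open subgroupoid and an ideal-giving open set; (ii) $W$ is also \emph{closed in its saturation} in the relevant sense --- more precisely, I claim $(\cG_1')|_W$ is clopen inside $\cG_1'$. Here is the delicate part: $W$ is open but \emph{not} closed in $M$ (its boundary $\partial W = Y_1$ is nonempty in general). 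However, what matters is whether the subset $s^{-1}(W)\cap t^{-1}(W)$ of the \emph{arrow space} $\cG_1'$ is clopen. Over $W$ the only arrows are units (since the foliation is by points there), and units over $W$ form a set that is open in $\cG_1'$; it is also closed in $\cG_1'$ because a limit of units over $W$ is a unit over $\overline{W}$, and a unit over $\partial W\subseteq Y_1$ — here I invoke Proposition \ref{prop:nicedecompoactionofR}: $Y_1\cap Per(\alpha)=\emptyset$, so over a point of $\partial W$ the group $\{(x,u)\}\to H(\cF)$ is injective, yet the holonomy could still be nontrivial; I need that the \emph{unit} over such a point is not a limit of units-over-$W$ other than along $W$ itself. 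This is where I expect to spend real effort: making precise, using the explicit description of $H(\cF)$ and the bisubmersion picture, that the clopen decomposition of the unit space $M = W \sqcup (M\setminus W)$ (with $W$ open saturated) lifts to a clopen — equivalently, direct-sum — decomposition of $C^*(M,\cF)$ so that the extension $(EC1)$ splits. Once that is established the vanishing of the $KK^1$ class is immediate. As a fallback, if the full clopen splitting is too strong, it suffices to produce a $*$-homomorphism section $C^*(M,\cF)|_Y \to C^*(M,\cF)$, which again follows from the fact that $C^*(M,\cF)|_Y = C^*(\cG_1'|_{M\setminus W})$ and $\cG_1'|_{M\setminus W}$ embeds as a closed subgroupoid of $\cG_1'$ whose $C^*$-algebra maps back into $C^*(\cG_1')$ compatibly — this embedding existing precisely because $\cG_1'|_{M\setminus W}$ is an \emph{open} subgroupoid of the longitudinally smooth groupoid obtained by the nice decomposition, by construction of $\cG_1'$ from Proposition \ref{prop:nicedecompoactionofR}.
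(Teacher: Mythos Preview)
Your proposal has a genuine gap: the claimed clopen decomposition of the arrow space of $H(\cF)$ into $W$ and $H(\cF)|_{M\setminus W}$ is false, and consequently the direct-sum splitting $C^*(M,\cF)=C_0(W)\oplus C^*(M,\cF)|_Y$ does not hold. The set of units over $W$ is open in $H(\cF)$, but it is \emph{not} closed: if $x_n\in W$ with $x_n\to x\in\partial W=Y_1$, then the units $x_n$ converge in $H(\cF)$ to the unit at $x$, which lies in $H(\cF)|_Y$ and not in $W$. Proposition \ref{prop:nicedecompoactionofR} tells you that the isotropy $\R\to H(\cF)_x$ is injective for $x\in Y_1$, but it says nothing preventing the unit at $x$ from being a limit of units in $W$; that limit is forced simply by the topology of the unit space $M$. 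So $C_0(W)$ is an ideal, not a direct summand, and the extension $(EC1)$ need not split. (You also conflate $\cG_1'$ with $H(\cF)$; they differ over $W$, where $\cG_1'|_W=W\times\R$ while $H(\cF)|_W=W$.)

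The paper's proof exploits precisely this discrepancy between $\cG_1'$ and $H(\cF)$. One compares $(EC1)$ with the corresponding exact sequence for $\cG_1'$,
\[
0\to C_0(W\times\R)\to C^*(\cG_1')\to C^*(\cG_1')|_Y\to 0\qquad (EC2),
\]
noting that the quotients agree since $\cG_1'|_Y=H(\cF)|_Y$, while on the ideals the vertical map is the evaluation $ev_0:C_0(W\times\R)\to C_0(W)$. Naturality gives $z_1=(ev_0)_*(z_2)$ for the two $KK^1$-classes, and $[ev_0]=0$ in $KK$ because the inclusion $W\hookrightarrow W\times\R$, $x\mapsto(x,0)$, is null-homotopic. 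Hence $z_1=0$ without any splitting of $(EC1)$.
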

\begin{proof}
The corresponding exact sequence for the groupoid $\cG'_1$ givers rise to the following diagram:
\[
\xymatrix{
0 \ar[r] & C_0(W)  \ar[r] \ar[r] & C^{\ast}(M,\cF)  \ar[r] & C^*(M,\cF)|_{Y}\ar[r]  \ar@{=}[d] & 0 &(EC1)\\
0 \ar[r] & C_0(W\times\R) \ar[r] \ar[u]^{ev_0} & C^*(\cG'_1)\ar[r] \ar[u] & C^*(\cG_1')|_{Y}  \ar[r] & 0 &(EC2)}
\]
Denote by $z_1,z_2$ the $KK^1$ elements associated with exact sequences $(EC1)$ and $(EC2)$. We have $z_1=(ev_0)_*(z_2)$. But $(ev_0)$, which is the map induced by the inclusion $x\mapsto (x,0)$ from $W$ to $W\times \R$ is the $0$ element in $KK$, whence $z_1=0$ as claimed.
\end{proof}

We immediately deduce:

\begin{cor}
We have $K_*(C^*(M,\cF))=K_*(C_0(W))\oplus K_*(C^*(M,\cF)|_{Y})$.\hfill$\square$

\end{cor}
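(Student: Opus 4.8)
The plan is to read the decomposition off the vanishing of the extension class established in the preceding Proposition, using only formal properties of bivariant $K$-theory. Note first that $(EC1)$ is semisplit: $C^{\ast}(M,\cF)$ is a quotient of the nuclear algebra $C^{\ast}(M\rtimes_\alpha\R)=C_0(M)\rtimes\R$ (the group $\R$ being amenable), hence is itself nuclear, and so is its quotient $C^{\ast}(M,\cF)|_{Y}$; by the Choi--Effros lifting theorem the surjection $C^{\ast}(M,\cF)\to C^{\ast}(M,\cF)|_{Y}$ therefore admits a completely positive contractive section. This is exactly what makes the class $z_1\in KK^1\big(C^{\ast}(M,\cF)|_{Y},\,C_0(W)\big)$ well defined, and the Proposition asserts $z_1=0$.

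Given this, I would invoke the fact that a semisplit extension with vanishing $KK^1$-class is split in $KK$. Concretely, $(EC1)$ gives rise to a distinguished triangle
$$C_0(W)\longrightarrow C^{\ast}(M,\cF)\longrightarrow C^{\ast}(M,\cF)|_{Y}\stackrel{z_1}{\longrightarrow}C_0(W)(0,1),$$
and since its connecting morphism $z_1$ vanishes, applying the functor $KK\big(C^{\ast}(M,\cF)|_{Y},\,-\big)$ and chasing the identity produces a class $s\in KK\big(C^{\ast}(M,\cF)|_{Y},\,C^{\ast}(M,\cF)\big)$ which is a right inverse in $KK$ to the quotient map. Hence $C^{\ast}(M,\cF)\simeq C_0(W)\oplus C^{\ast}(M,\cF)|_{Y}$ in $KK$, and applying $K_{\ast}=KK_{\ast}(\C,-)$ gives the claim. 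Equivalently, one may stay at the level of $K$-groups: the boundary maps of the six-term exact sequence of $(EC1)$ are given by Kasparov product with $z_1$, hence vanish, leaving the short exact sequences
$$0\longrightarrow K_i(C_0(W))\longrightarrow K_i(C^{\ast}(M,\cF))\longrightarrow K_i\big(C^{\ast}(M,\cF)|_{Y}\big)\longrightarrow 0\qquad(i=0,1),$$
which the $KK$-section $s$ above splits.

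The single point that genuinely uses the hypothesis — and hence the only ``obstacle'' worth flagging — is this last step: a short exact sequence of abelian groups with zero connecting map need not split, so one cannot conclude from ``the $K$-theoretic boundary maps vanish'' alone. What does the job is the strictly stronger statement $z_1=0$ in $KK^1$, which furnishes an honest splitting section in $KK$ (already as in Lemma~\ref{lem:trivlemma} applied to the mapping cone of the quotient). Once this is noticed the argument is entirely formal, which is why the corollary can be recorded with just ``$\square$''.
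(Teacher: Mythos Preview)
Your argument is correct and is exactly what the paper intends: the corollary is recorded with ``$\square$'' precisely because it is immediate from $z_1=0$ via the Puppe sequence in $KK(C^{\ast}(M,\cF)|_{Y},-)$, producing a $KK$-section of the quotient map. Your emphasis that one needs $z_1=0$ in $KK^1$ (and not merely vanishing of the $K$-theory boundary maps) to obtain a splitting is the right point to make explicit; the reference to Lemma~\ref{lem:trivlemma} is a bit tangential, but your direct Puppe-sequence argument already suffices.
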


If all periodic points are in fact fixed, \ie if ${Per}(\alpha)=W$, then, using Connes' Thom isomorphism \cite{ConnesThom} this computation yields:

\begin{cor}\label{prop:compKX}
Assume that all the periodic points are in fact fixed. The $K$-theory group of $C^*(M,\cF)$ is \\ \indent $\hfill K_*(C^*(M,\cF)) = K_*(C_0(W)) \oplus K_*(C_{0}(Y)\rtimes\R) = K_*(C_0(W)) \oplus K_{1-*}(C_{0}(Y)). \hfill \square
$
\end{cor}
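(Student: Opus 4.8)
The plan is to deduce this corollary directly from the previous one together with Connes' Thom isomorphism, under the extra hypothesis $\mathrm{Per}(\alpha)=W$. First I would invoke the preceding corollary, which gives the splitting
$$K_*(C^*(M,\cF))\;=\;K_*(C_0(W))\oplus K_*\big(C^*(M,\cF)|_{Y}\big),$$
valid with no assumption on periodic points. So the only thing left to identify is the second summand $K_*(C^*(M,\cF)|_Y)$ as $K_{1-*}(C_0(Y))$.

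The key step is to show that, under the assumption that every periodic point is fixed, one has $C^*(M,\cF)|_Y \cong C_0(Y)\rtimes_\alpha\R$. Recall $Y=M\setminus W$, and on $Y$ the foliation $\cF$ restricts to the one defined by the action groupoid. The point of the hypothesis $\mathrm{Per}(\alpha)=W$ is exactly that $Y\cap\mathrm{Per}(\alpha)=\emptyset$ (this is also what Proposition \ref{prop:nicedecompoactionofR} delivers on $Y_1=\partial W$, but here we need it on all of $Y$): there are no stably periodic points in $Y$, so the map $\{(x,u);\ u\in\R\}\to H(\cF)$ is injective at every $x\in Y$, hence the morphism $M\rtimes_\alpha\R\to H(\cF)$ is injective over $Y$. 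Consequently the restriction of the holonomy groupoid to $Y$ coincides with the restriction of the action groupoid $M\rtimes_\alpha\R$, i.e. with $Y\rtimes_\alpha\R$ (note $Y$ is $\alpha$-invariant since $W$ is). At the $C^*$-level this yields $C^*(M,\cF)|_Y=C^*(Y\rtimes_\alpha\R)=C_0(Y)\rtimes_\alpha\R$.

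Once this identification is in place, I would apply Connes' Thom isomorphism \cite{ConnesThom} to the $\R$-algebra $C_0(Y)$: it gives $K_*(C_0(Y)\rtimes_\alpha\R)\cong K_{1-*}(C_0(Y))$. Substituting into the splitting from the preceding corollary gives
$$K_*(C^*(M,\cF))=K_*(C_0(W))\oplus K_*(C_0(Y)\rtimes\R)=K_*(C_0(W))\oplus K_{1-*}(C_0(Y)),$$
which is the claim.

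The main obstacle is the middle step: one must be careful that the hypothesis ``all periodic points are fixed'' really forces the holonomy groupoid to be the plain transformation groupoid over the \emph{whole} of $Y$, not merely over the boundary stratum $Y_1=\partial W$ treated in Proposition \ref{prop:nicedecompoactionofR}. Away from $\overline W$ the vector field $X$ does not vanish, so there the foliation is regular of rank one and the period bounding lemma (Lemma \ref{lem:periodbound}) shows that a point with no nearby periodic orbit of small period contributes no nontrivial holonomy; the absence of periodic points in $Y$ (equivalently $\mathrm{Per}(\alpha)=W$, so $V\cap\mathrm{Per}(\alpha)=\emptyset$) then rules out the non-Hausdorff phenomena and the collapsing of the $\R$-fibre, giving injectivity of $M\rtimes_\alpha\R\to H(\cF)$ over all of $Y$. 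Everything else is a routine application of the long exact sequence (already done in the previous corollary) and of the Thom isomorphism.
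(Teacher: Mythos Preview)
Your proof is correct and follows exactly the same route as the paper: use the preceding corollary's splitting $K_*(C^*(M,\cF))=K_*(C_0(W))\oplus K_*(C^*(M,\cF)|_Y)$, identify $C^*(M,\cF)|_Y$ with $C_0(Y)\rtimes_\alpha\R$ under the hypothesis $\mathrm{Per}(\alpha)=W$, and apply Connes' Thom isomorphism. In fact you supply more justification than the paper does for the middle identification (the paper simply records the hypothesis and invokes Connes' Thom isomorphism in one line), and your remark that $Y\cap\mathrm{Per}(\alpha)=\emptyset$ forces injectivity of $M\rtimes_\alpha\R\to H(\cF)$ over all of $Y$ is exactly the point.
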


\begin{remark}
Corollary \ref{prop:compKX} can be interpreted by saying that, when there are no non trivial stably periodic points, the classifying space of proper actions of the holonomy groupoid is $W \coprod Y \times \R$. The associated assembly map is an isomorphism. 
\end{remark}


\subsection{The stably periodic points}

In presence of nontrivial stable periodic points, the complete computation of the $K$-theory is not so simple... Even in the regular case, this computation is quite hard. See \eg \cite{Torpe}.

As a consequence of proposition \ref{prop:nicedecompoactionofR} we find:

\begin{prop}
The set  $\widehat{Per}(\alpha) = Per(\alpha) \setminus W$ of \emph{nontrivial stably periodic points} is open.
\end{prop}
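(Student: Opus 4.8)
The plan is to exhibit $\widehat{Per}(\alpha)$ as an intersection of two open sets. First I would observe that $Per(\alpha)$ itself is open: if $x\in Per(\alpha)$, then by definition there is an open neighborhood $U$ of $x$ and a smooth function $f:U\to\R^*$ with $(y,f(y))\in P$ for all $y\in U$; but the very same pair $(U,f)$ witnesses that \emph{every} point of $U$ is stably periodic, so $U\subseteq Per(\alpha)$. Hence $Per(\alpha)$ is open.

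Next I would invoke Proposition \ref{prop:nicedecompoactionofR}, which gives $Y_1\cap Per(\alpha)=\emptyset$, together with the obvious inclusion $W\subseteq Per(\alpha)$ and the identification $Y_1=\partial W=\overline W\setminus W$ (valid since $W$ is open). Writing $\overline W=W\cup\partial W$ we get
\[Per(\alpha)\cap\overline W=(Per(\alpha)\cap W)\cup(Per(\alpha)\cap\partial W)=W\cup\emptyset=W.\]
In words: a stably periodic point lying in the closure of $W$ already lies in $W$.

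Finally, for any $x\in Per(\alpha)$ the displayed equality shows that $x\in\overline W$ if and only if $x\in W$, so $x\notin W$ is equivalent to $x\notin\overline W$. Therefore
\[\widehat{Per}(\alpha)=Per(\alpha)\setminus W=Per(\alpha)\setminus\overline W=Per(\alpha)\cap(M\setminus\overline W),\]
which presents $\widehat{Per}(\alpha)$ as the intersection of the open set $Per(\alpha)$ with the open set $M\setminus\overline W$, hence open.

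There is no genuine obstacle here once Proposition \ref{prop:nicedecompoactionofR} is available; the only points needing a moment's care are the openness of $Per(\alpha)$ — i.e.\ noting that the witnessing pair $(U,f)$ of a stably periodic point serves all nearby points as well, which is immediate from the definition of stable periodicity — and the elementary bookkeeping $Y_1=\overline W\setminus W$ that converts the statement $Y_1\cap Per(\alpha)=\emptyset$ into a statement about $\overline W$.
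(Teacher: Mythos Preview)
Your proof is correct and follows essentially the same approach as the paper's: the paper simply says that by Proposition~\ref{prop:nicedecompoactionofR} the set $W$ is closed in $Per(\alpha)$, whence its complement is open in $Per(\alpha)$ and therefore in $M$ since $Per(\alpha)$ is open. Your computation $Per(\alpha)\cap\overline W=W$ is precisely the statement that $W$ is closed in $Per(\alpha)$, and you additionally spell out why $Per(\alpha)$ is open, which the paper leaves implicit.
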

\begin{proof}
By prop. \ref{prop:nicedecompoactionofR}, the set $W$ is closed in $Per(\alpha)$, whence its complement is open in $Per(\alpha)$ - and therefore in $M$ since $Per(\alpha)$ is open.
\end{proof}

For $x\in \widehat{Per}(\alpha)$, let $p(x)\in \R_+$ be the infimum of the set of $t>0$ such that $(x,t)\in M\times \R$ is the trivial element in $H(\cF)$. By \cite{Debord2013} it follows that $p(x)>0$ and $(x,p(x))$ is  the trivial element in $H(\cF)$.

\begin{prop}
The map $p:\widehat{Per}(\alpha)\to \R_+$ is smooth.
\end{prop}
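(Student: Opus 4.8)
The plan is to show local smoothness of $p$ near an arbitrary point $x_0 \in \widehat{Per}(\alpha)$ by producing, in a neighbourhood of $x_0$, a smooth function which agrees with $p$ and applying the implicit function theorem.

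First I would record the key consequence of the definition of $\widehat{Per}(\alpha)$: since $x_0$ is a \emph{nontrivial} stably periodic point, there is an open neighbourhood $U$ of $x_0$ and a smooth function $f\colon U\to \R^*$ with $(y,f(y))\in P$ for all $y\in U$ (i.e. $\alpha_{f(y)}(y)=y$). In particular, for each such $y$ the vector field $X$ generating $\alpha$ does not vanish at $y$ (shrinking $U$ so that $X|_U\neq 0$, which is possible because $X(x_0)\neq 0$ — indeed $x_0 \notin W \supseteq \mathrm{Int}(Fix(\alpha))$ and, more to the point, $(x_0,p(x_0))$ being the trivial element with $p(x_0)>0$ forces $X(x_0)\neq 0$ by Lemma \ref{lem:periodbound} applied as in the proof of Proposition \ref{prop:nicedecompoactionofR}). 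So on $U$ the integral curves of $X$ are genuine periodic orbits of some prime period $\tau_y>0$, and $f(y)$ is an integer multiple of $\tau_y$; in fact $p(y)=\tau_y$ is the \emph{prime} period.

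The main step is then to realise the prime period as the solution of a transverse return condition. Choose a small embedded codimension-one submanifold $\Sigma\subset U$ through $x_0$ transverse to $X$ (a local slice). By the flow-box/Poincar\'e return argument, the map $(y,t)\mapsto \alpha_t(y)$ restricted to $\Sigma\times(\tau_{x_0}-\epsilon,\tau_{x_0}+\epsilon)$ meets $\Sigma$ in a smooth hypersurface which is the graph of a smooth first-return time $r\colon \Sigma\to \R_+^*$ with $r(x_0)=\tau_{x_0}=p(x_0)$; this uses the implicit function theorem applied to a defining function of $\Sigma$ composed with the (smooth) flow, the transversality guaranteeing the rank condition. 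Extend $r$ to a smooth function on a flow-saturated neighbourhood $U'\subset U$ of $x_0$ by $\widetilde r(\alpha_s(y))=r(y)$ for $y\in\Sigma$; this is well defined and smooth because the flow near $x_0$ is, up to diffeomorphism, a product $\Sigma\times(\text{interval})$, and $r$ is constant along orbits by construction. One checks $\widetilde r=p$ on $U'$: $\widetilde r(y)$ is \emph{a} return time so $p(y)\mid$ it in the appropriate sense, and minimality/transversality (a nearby orbit cannot have prime period much smaller than $\tau_{x_0}$ by continuity, after shrinking $U'$, combined with the period bounding Lemma \ref{lem:periodbound} to rule out period escaping to $0$) forces equality with the prime period $p(y)$.

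The point I expect to be the main obstacle is the last identification $\widetilde r = p$: one must argue that the first return to the transversal $\Sigma$ really is the \emph{prime} period and not a proper multiple of it. This is where I would invest care — using the period bounding lemma to get a uniform lower bound $\eta$ on prime periods near $x_0$, and continuity of the return map to ensure that for $y$ close to $x_0$ the prime period $p(y)$ lies in the window $(\tau_{x_0}-\epsilon,\tau_{x_0}+\epsilon)$ where the return time is computed, so the first return occurs at exactly $p(y)$. Once $p=\widetilde r$ on an open neighbourhood of each point of $\widehat{Per}(\alpha)$ and $\widetilde r$ is smooth there, smoothness of $p$ on all of $\widehat{Per}(\alpha)$ follows. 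Everything else — transversality of $\Sigma$, smoothness of the flow, the implicit function theorem step — is routine.
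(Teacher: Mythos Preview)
There are two genuine gaps.

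First, your claim that $X(x_0)\neq 0$ for every $x_0\in\widehat{Per}(\alpha)$ is false. Consider the standard rotation action of $\R$ on $\R^2$ with unit angular speed: the origin is an isolated fixed point, so $W=\mathrm{Int}(Fix(\alpha))=\emptyset$, and the constant function $f\equiv 2\pi$ shows every point is stably periodic. Thus $0\in\widehat{Per}(\alpha)$ while $X(0)=0$. Neither $x_0\notin W$ nor the period bounding lemma excludes this --- the lemma is a dichotomy, not a proof that $X$ is nonvanishing. The paper treats the case $X(x)=0$ separately: after rescaling the vector field by $1/f$ one obtains a local $S^1$-action, and Bochner's linearization theorem then shows this action is linear and faithful, whence the smooth local period function $f$ agrees with $p$.

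Second, even when $X(x_0)\neq 0$, your identification $p(x_0)=\tau_{x_0}$ (the prime orbit period) can fail. The quantity $p(x)$ is the smallest $t>0$ for which $(x,t)$ is \emph{trivial in $H(\cF)$}, which requires an identity bisection through it --- a much stronger condition than $\alpha_t(x)=x$. Take the flow on $\R^2\times(\R/2\pi\Z)$ given by $\alpha_t(z,\phi)=(R_{t/2}z,\phi+t)$, where $R_s$ is planar rotation by angle $s$. The core circle $\{0\}\times S^1$ has orbit period $2\pi$, but every nearby orbit has period $4\pi$; hence $(x_0,2\pi)$ is not trivial in $H(\cF)$ and $p(x_0)=4\pi=2\tau_{x_0}$. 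Your first-return time to a transversal gives $2\pi$ here, not $p$. The paper addresses exactly this: it records that the orbit period is $p(x)/k$ for some $k\in\N^*$, passes to the induced $\Z/k\Z$-action on a transversal, and again uses Bochner's linearization to conclude $f=p$ in a neighbourhood of $x$.
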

\begin{proof}
Since $(x,p(x))$ is the trivial element in $H(\cF)$ there exists an open neighborhood $U\subset \widehat{Per}(\alpha)$ of $x$ and a bounded (below and above) smooth function $f:U\to \R_+^*$ such that $(y,f(y))\in P$ for all $y\in U$ and $p(x)=f(x)$. For $y\in U$, since $U$ is a neighborhood of $y$, it follows that $f(y)$ is a multiple of $p(y)$.

\begin{itemize}
\item Assume $X(x)=0$. Let $m\in \R_+$ such that $f(y)\le m$ for all $y\in U$. Put  $V=\{x\in U;\ \forall t\in [0,m],\  \alpha_t(x)\in U\}$; by compactness of $[0,m]$ it is an open subset of $U$. Then by periodicity, $V$ is invariant by $\alpha_t,\ t\in \R$. For $y\in V$ and $t\in [0,T]$, as $f(\alpha_t(y))$ is a multiple of $p(\alpha_t(y))=p(y)$, itt follows by continuity of $f$  that $f(\alpha_t(y))=f(y)$. Replacing $X$ by $1/fX$, we get an action of $S^1$ on $V$.

Since $S^1$ is compact, Bochner's linearization theorem \cite{Bochner} says that in an open and $S^1$-equivariant neighborhood  $U'$ of $x$ the $S^1$-action is actually a linear representation of $S^1$, which is faithful since $f(x)=p(x)$. It follows that $p(y)=f(y)$ for all $y\in U'$.
\item Assume that $X(x)\ne 0$. Then $x$ is periodic of period $p(x)/k$ with $k\in \N^*$. Now choose a transversal $T$ at $x$; we get an action of $\Z/k\Z$ and applying Bochner's linearization theorem again, we conclude that $f(y)=p(y)$ in a neighborhood of $x$.\qedhere
\end{itemize}
\end{proof}

When restricting to $\widehat{Per}(\alpha)$, we may therefore replace $X$ by $\frac{1}{p}X$ and obtain an action of $S^1$. The foliation groupoid is then $W\coprod \widehat{Per}(\alpha)\rtimes S^1\coprod  (M\setminus Per(\alpha))\rtimes \R$.

\begin{remarks}
\begin{enumerate}
\item The building blocks of $C^*(M,\cF)$ are the algebras $C_0(W)$, $C_0(\widehat{Per}(\alpha))\rtimes S^1$ and $C_0(M\setminus Per(\alpha))\rtimes \R$. For each of them there is of course a left-hand side and a Baum-Connes map. Actually, since the first two are given by compact group actions, they are their own left hand side! The left hand side for  $C_0(M\setminus Per(\alpha))\rtimes \R$, given by Connes' Thom isomorphism is $(M\setminus Per(\alpha))\times \R$.
\item We already noticed that $K_*(C^*(M,\cF))=K_*(W)\oplus K_*( C^*(M,\cF)|_{Y})$.

To compute $K_*( C^*(M,\cF)|_{Y})$ we may use the exact sequence $$0\to C_0(\widehat{Per}(\alpha))\rtimes S^1\longrightarrow C^*(M,\cF)|_{Y}\longrightarrow C_0(M\setminus Per(\alpha))\rtimes \R\to 0\eqno(EC3)$$

In order to compute the connecting map of this sequence we note that we have a diagram:
\[
\xymatrix{
0 \ar[r] & C_0(\widehat{Per}(\alpha))\rtimes S^1  \ar[r] \ar[r] & C^{\ast}(\cF)|_{Y}  \ar[r] & C^*(M,\cF)|_{M\setminus Per(\alpha)}\ar[r]  \ar@{=}[d] & 0 &(EC3)\\
0 \ar[r] & C_0(\widehat{Per}(\alpha))\rtimes\R \ar[r] \ar[u]^{q} & C_0(Y)\rtimes\R\ar[r] \ar[u] & C_0(M\setminus Per(\alpha))\rtimes \R  \ar[r] & 0 &(EC4)}
\]
\end{enumerate}
Denote by $z_3,z_4$ the $KK^1$ elements associated with exact sequences $(EC3)$ and $(EC4)$. We have $z_3=q_*(z_4)=z_4\otimes [q]$. To compute $z_3$ we then remark:

\begin{itemize}
\item Through Connes' Thom isomorphism $$KK^1(C_0(M\setminus Per(\alpha))\rtimes \R,C_0(\widehat{Per}(\alpha))\rtimes\R)=KK^1(C_0(M\setminus Per(\alpha)),C_0(\widehat{Per}(\alpha)))$$ the element $[z_4]$ corresponds to the exact sequence of commutative algebras $$0 \to  C_0(\widehat{Per}(\alpha))  \longrightarrow C_0(Y)\longrightarrow C_0(M\setminus Per(\alpha))\to 0$$

\item Under the Takesaki-Takai isomorphism $C_0(\widehat{Per}(\alpha))\otimes \cK\simeq (C_0(\widehat{Per}(\alpha))\rtimes S^1)\rtimes \Z$ the element $[q]$ in
\begin{multline*}
KK(C_0(\widehat{Per}(\alpha))\rtimes \R,C_0(\widehat{Per}(\alpha))\rtimes S^1) = KK^1(C_0(\widehat{Per}(\alpha)),C_0(\widehat{Per}(\alpha))\rtimes S^1)\\ = KK^1((C_0(\widehat{Per}(\alpha))\rtimes S^1)\rtimes \Z,C_0(\widehat{Per}(\alpha))\rtimes S^1)
\end{multline*}
is the one associated the Pimsner Voiculescu exact sequence $$0\to B\otimes \cK \longrightarrow \gT \longrightarrow B\rtimes \Z\to0$$ (here $B= C_0(\widehat{Per}(\alpha))\rtimes S^1$).
\end{itemize}
\end{remarks}






\appendix
\bigskip
\section{When the classifying spaces are not manifolds}\label{appendix}
\label{app:Enotmanifold}

We finally explain how one should be able to to get rid of the assumption on the classifying spaces: we just assume that the foliation $\cF$ has a nice decomposition with Hausdorff Lie groupoids $\cG_i$ but  the classifying spaces $E_i$ for proper actions are not manifolds. 

In order to construct a left-hand side and a Baum-Connes map for $C^*(M,\cF)$, we just need to construct a left-hand side for a mapping cone of a morphism $\pi:G\to G'$ of Hausdorff Lie groupoids which is a submersion and the identity at the level of objects. As in the particular cases considered here, we then may construct left-hand sides for mapping tori and then of telescopic algebras.

In fact, given such a morphism $\pi:G\to G'$ we just have to show that:
\begin{enumerate}\renewcommand\theenumi{\roman{enumi}}
\renewcommand\labelenumi{(\rm {\theenumi})}
\item we may express the left-hand side of  $G$ and $G'$ as the $K$-theory of $C^*$-algebras $T$ and $T'$;
\item the Baum-Connes maps are given by elements $\mu\in KK(T,C^*(G))$ and $\mu'\in KK(T',C^*(G'))$;
\item  we may construct an element $x\in KK(T,T')$ such that $\pi_{*}(\mu)=x\otimes \mu'$.
\end{enumerate}
Then write $x=[f]^{-1}\otimes [g]$ where $f:D\to T$ and $g:D\to T'$ are morphisms with $f$ a $K$-equivalence. A left-hand side for $C_\pi$ is then the cone $C_{g}$ of $g$. As $f^*(\pi_*(\mu))=[f]\otimes \mu\otimes [\pi]=[g]\otimes \mu'$, me may construct an element $\tilde \mu\in KK(C_{g},C_\pi)$ as in Lemma \ref{lem:trivlemma} which defines the desired Baum-Connes map.

To do so, recall that if $G$ is a Hausdorff Lie groupoid, then the left-hand side for the Baum-Connes map can be described in the Baum-Douglas way (see \cite{BaumDouglas1, BaumDouglas2, BaumConnes, BaumConnesHigson, Tu1, Tu}):  there is an inductive limit of manifolds $(M_{k})_{k\in \N}$ with maps $h_{k}:M_{k}\to M_{k+1}$, forming a sequence of  approximations  of $E$. We may assume that the maps $q_k:M_k\to G^{(0)}$ are $K$-oriented in a $G$-equivariant way, and therefore so are the maps $h_{k}$. We also assume that the dimensions of all the $M_k$ are equal modulo $2$. Then the ``left-hand side'' $K_*^{top}(G)$ is the inductive limit $\varinjlim_{k}(K_*(C_0(M_k)\rtimes G),(h_k)!)$.

The Baum-Connes map on the image of $K_*(C_0(M_k)\rtimes G)$ is given by the element $(q_k)!$.

Put $A_k=C_0(M_k)\rtimes G$.

The same construction is then given for the groupoid $G'$, yielding proper $G'$-manifolds $M'_k$, maps $h'_k:M'_k\to M'_{k+1}$, algebras $A'_k=C_0(M'_k)\rtimes G'$, \emph{etc.} 

We may (and will) also assume that $h_{k+1}(M_{k})/G$ is relatively compact in $M_{k+1}/G$. As in section \ref{subsubsec:Subm}, let $\Gamma=\ker \pi$. As $G'$ acts properly on $M_{k+1}/\Gamma$ and by the relative compactness assumption, we may embed $h_{k}(M_{k-1})/\Gamma)$ in a manifold approximating the classifying space for proper actions $E'$ of $G'$. Using a subsequence of the $M'_k$ we may assume that we are given equivariant smooth maps $\ell_k:M_k\to M'_k$. Up to taking again a subsequence, we may further assume that the maps $h'_k\circ\ell_{k}$ and $\ell_{k+1}\circ h_k$ are homotopic (where $h'_k:M_{k}'\to M'_{k+1}$). Note that  the maps $\ell_k$ are automatically $K$-oriented, and thus we obtain $KK$-elements $(\ell_k)!\in KK(A_k,A'_k)=KK(C_0(M_k)\rtimes G,C_0(M'_k)\rtimes G')$ satisfying $(\ell_{k})!\otimes (h'_k)!=(h_k)!\otimes(\ell_{k+1})!$. 

Now, using \cite[Appendix]{VLHO}, we find (explicit) algebras $D_k$ and morphisms $f_k:D_k\to A_k$ which are $K$-equivalences and $g_k:D_k\to A'_k$ such that $(\ell_k)!=[f_k]^{-1}\otimes [g_k]$.

Put then $x_k=[f_k]\otimes (h_k)!\otimes [f_{k+1}]^{-1}\in KK(D_k,D_{k+1})$. We find 
\begin{eqnarray*}
(g_{k+1})_*(x_k)&=&[f_k]\otimes (h_k)!\otimes [f_{k+1}]^{-1}\otimes [g_{k+1}]\\
&=&[f_k]\otimes (h_k)!\otimes (\ell_{k+1})!=[f_k]\otimes (\ell_{k})!\otimes (h'_k)!\\
&=&[g_k]\otimes (h'_k)!
\end{eqnarray*}

As explained in section \ref{subsec:lhscones},  using precise homotopies between Kasparov bimodules representing these elements, we may then construct elements $y_k\in KK(C_{g_k},C_{g_{k+1}})$.

Note also, that we have the equalities $\pi_*((q_k)_!)=(\ell_k)!\otimes (q'_k)!\in KK(A_k,C^*(G'))$ as in proposition \ref{prop:lhspi}, yielding an element $z_k\in KK(C_{g_k},C_{\pi })$.

In order to construct the left-hand side for the mapping cone we need to make the following assumption - which could be true in general:

\begin{assumption}\label{ass:homotopy}
We assume that the homotopies used in the constructions of $y_k$ and $z_k$ are well matching, so that we have the equality $y_k\otimes z_{k+1}=z_k$.
\end{assumption}

One may then construct, for each $k$, $C^*$-algebras $B_k$ and $B'_k$, morphisms $u_k:B_k\to D_{k}$ and $u'_k:B_k\to A'_{k}$ which are $KK$-equivalences and $v_k:B_k\to D_{k+1}$ and $v'_k:B_k\to D_{k+1}$ such that  $x_k=[u_k]^{-1}\otimes [v_k]$ and $(h'_k)!=[u'_k]^{-1}\otimes [v'_k]$ (using \cite[Appendix]{VLHO}).

left-hand sides for $G$ and $G'$ (up to a shift of dimension by $1$) are then the infinite telescopic algebras $T=T(v,u)$ and $T'=T(v',u')$. 
 
These algebras are mapping tori $\cT(\check u,\check v)$ and $\cT(\check u',\check v')$ where $$\check u,\check v:\widecheck B=\bigoplus_{k=1}^{+\infty} B_k\to \widecheck D=\bigoplus_{k=0}^{+\infty} D_k\ \ \hbox{and} \ \ \check u',\check v':\widecheck B'=\bigoplus_{k=1}^{+\infty} B'_k\to \widecheck A'=\bigoplus_{k=0}^{+\infty} A'_k$$ are the maps given by $$\check u(x_1\ldots,x_k,\ldots)=(0,u_1(x_1),\ldots, u_k(x_k),\ldots)\ \ \hbox{and}\ \ \check v(x_1\ldots,x_k,\ldots)=(v_1(x_1),\ldots, v_k(x_k),\ldots),$$
and analogous formulae for $\check u'$ and $\check v'$.

The families of $(q_k)!$ and $(q'_k)!$ give elements $\check q!\in KK(\widecheck D,C^*(G))$ and $\check q'!\in KK(\widecheck A',C^*(G'))$.

The homotopy between $[\check u]\otimes q!$ and $[\check v]\otimes q!$ (\resp $[\check u']\otimes q'!$ and $[\check v']\otimes q'!$) gives rise to the element $\mu_G\in KK(T,C^*(G))$ and $\mu'_G\in KK(T',C^*(G'))$.

We may now do the same construction at the mapping cone level: writing $y_k=[\alpha_k]^{-1}\otimes [\beta_k]$ where $\alpha_k:V_k\to C_{g_k}$ and $\beta_k:V_k\to C_{g_{k+1}}$ are morphisms we may consider the infinite telescope $T(\beta,\alpha)=\cT(\check \alpha,\check \beta)$ as a left-hand side $K_{*,top}(\pi)$ for $C_\pi$. The element $\check z$ defined by the $z_k$'s gives an element of $KK(\bigoplus C_{g_k},C_\pi)$; a homotopy between $[\check \alpha] \otimes \check z$ and $[\check \beta] \otimes \check z$ (based on our assumption) gives rise to the Baum-Connes element $\mu_\pi\in KK(T(\beta,\alpha),C_\pi)$ and thus a morphism $\mu_\pi:K_{*,top}(\pi)\to K_*(C_\pi)$.

\begin{remark}
One may push a little further the above calculations. Indeed one needs to check that we have an exact sequence $$\xymatrix{K_{*,top}(G)\ar[rr]^{\pi_*}&&K_{*,top}(G')\ar[ld]\\ &K_{*,top}(\pi)\ar[lu]}$$ compatible with the mapping cone exact sequence. It then follows that if $G$ and $G'$ satisfy the (full version of the) Baum-Connes conjecture, then so does $C_\pi$.
\end{remark}


\end{document}